\theoremstyle{plain}
\font\elevensc=cmcsc10 scaled \magstephalf
\newcommand{\hra}{\hookrightarrow}
\newcommand{\hor}[1]{\smash
         {\mathop{{\lgrghtar}}\limits^{\lower2pt\hbox{$\scriptstyle{#1}$}}}}
\newcommand{\horr}[1]{\smash
{\mathop{{\lglgrghtar}}\limits^{\lower2pt\hbox{$\scriptstyle{#1}$}}}}
\newcommand{\hr}[1]{\smash
{\mathop{{\to}}\limits^{\lower2pt\hbox{$\scriptstyle{#1}$}}}}
\newcommand{\lgrghtar}{{\ha2{\relbar\joinrel\rightarrow}\ha2}}
\newcommand{\lglgrghtar}{{\ha1{\relbar\joinrel\relbar\joinrel\rightarrow}\ha1}}
\newcommand{\nmnm}[1]{{\elevensc #1}}
\newcommand{\rsdp}{{\,\times\kern-3pt\lower-1pt%
\hbox{$\scriptscriptstyle|$\ha3}}}
\newcommand{\srjr}{\twoheadrightarrow}
\newcommand{\plim}[1]{\hbox to14pt{%
lim\kern-14pt\lower4.5pt\hbox{$\scriptstyle\longleftarrow$}%
\kern-8pt\lower8.5pt\hbox{$\scriptstyle{#1}$}}\ha{3}}
\newcommand{\ilim}[1]{\hbox to14pt{%
lim\kern-14pt\lower4.5pt\hbox{$\scriptstyle\longrightarrow$}%
\kern-8pt\lower8.5pt\hbox{$\scriptstyle{#1}$}}\ha{3}}
\newcommand{\plm}[1]{\hbox to14pt{%
lim\kern-14pt\lower5pt\hbox{$\scriptstyle\longleftarrow$}%
\kern-8pt\lower8.5pt\hbox{$\scriptstyle{#1}$}}\ha{3}}
\newcommand{\ilm}[1]{\hboxto14pt{%
lim\kern-14pt\lower5pt\hbox{$\scriptstyle\longrightarrow$}%
\kern-8pt\lower8.5pt\hbox{$\scriptstyle{#1}$}}\ha{3}}
\newcommand{\clC}{{\mathcal C}}
\newcommand{\clD}{{\mathcal D}}
\newcommand{\clO}{{\mathcal O}}
\newcommand{\clP}{{\mathcal P}}
\newcommand{\clT}{{\mathcal T}}
\newcommand{\clU}{{\mathcal U}}
\newcommand{\lvF}{{\mathbb F}}
\newcommand{\lvN}{{\mathbb N}}
\newcommand{\lvP}{{\mathbb P}}
\newcommand{\lvQ}{{\mathbb Q}}
\newcommand{\lvZ}{{\mathbb Z}}
\let\eu=\mathfrak
\newcommand{\eum}{{\eu m}}
\newcommand{\euX}{{\eu X}}
\DeclareMathOperator{\chr}{char}
\DeclareMathOperator{\codim}{codim}
\DeclareMathOperator{\Div}{Div}
\DeclareMathOperator{\Frob}{Frob}
\DeclareMathOperator{\Proj}{Proj}
\DeclareMathOperator{\Spec}{Spec}
\newcommand{\Aut}[2]{{\rm Aut}{#1}(#2)}
\newcommand{\Bstar}{{\rm{\sf(}\lower2pt\hbox{\LARGE$*$}{\sf)}}}
\newcommand{\CCT}{{\rm{\defi{(CCT)}}}}
\newcommand{\defi}[1]{\textsf{#1}}
\newcommand{\hb}[1]{\hbox to-#1pt{}}
\newcommand{\ha}[1]{\hbox to#1pt{}}
\newcommand{\hmm}{\hb1}
\newcommand{\itm}[2]{%
\begin{itemize}[leftmargin=#1pt]
#2
\end{itemize}
}
\newcommand{\jspl}{$\jmath$\ha1-\ha{.5}split}
\newcommand{\Kv}{{K\!v}}  
\newcommand{\kv}{{k_v}}
\newcommand{\Lv}{{L\hmm v}}
\newcommand{\Lw}{{L\hmm w}}
\newcommand{\lps}[1]{{(\hb1(#1)\hb1)}}
\newcommand{\Mv}{M\!v}
\newcommand{\mclU}{{\scriptscriptstyle\clU}}
\newcommand{\md}{\ha1|\ha1}
\newcommand{\Nv}{N\hb{1.5}v}
\newcommand{\Nw}{N\hb{1.5}w}
\newcommand{\nix}{{\phantom{|}}}
\newcommand{\oli}{\overline}
\newcommand{\Srj}{({\sf Srj})}
\newcommand{\Sgm}[1]{\Sigma_{#1}}
\newcommand{\td}{{\rm td}}
\newcommand{\tl}{\tilde}
\newcommand{\tlv}{{\tilde v}}
\newcommand{\UU}{{\scriptscriptstyle\clU}}
\newcommand{\Ust}[1]{{}^*\hb{1.5}{#1}_\UU}
\newcommand{\Uj}[1]{{#1}_\UU}
\newcommand{\Us}[1]{{#1}_\UU}
\newcommand{\Val}[2]{{\rm Val}_{#1}(#2)}
\newcommand{\vrl}{valuation-regular-like}
\newcommand{\vid}{{\not{\hb3\lower-1pt\hbox{$\scriptstyle\bigcirc$}}}}
\newtheorem{theorem}{Theorem}[section]
\newtheorem*{theorem*}{Theorem}
\newtheorem{corollary}[theorem]{Corollary}
\newtheorem{keylemma}[theorem]{Key Lemma}
\newtheorem{lemma}[theorem]{Lemma}
\newtheorem{proposition}[theorem]{Proposition}
\theoremstyle{definition}
\newtheorem{definition}[theorem]{Definition}
\newtheorem{definition/remark}[theorem]{Definition/Remark}
\newtheorem{example}[theorem]{Example}
\newtheorem{example/fact}[theorem]{Example/Fact}
\newtheorem{fact}[theorem]{Fact}
\newtheorem{fact/definition}[theorem]{Fact/Definition}
\newtheorem{notations/remarks}[theorem]{Notations/Remarks}
\newtheorem{remark/definition}[theorem]{Remark/Definition}
\newtheorem{remarks/examples}[theorem]{Remarks/Examples}
\begin{document}

\title[\textbf{On a Conjecture of Colliot-Th\'el\`ene}]
     {\textbf{On a Conjecture of Colliot-Th\'el\`ene}}

\author{Florian Pop}

\address{Department of Mathematics, University of Pennsylvania
        \vskip0pt
        DRL, 209 S 33rd Street, Phila\-delphia, PA 19104, USA}
\email{pop@math.upenn.edu}
\urladdr{https://www.math.upenn.edu/\~{}pop/}

\begin{abstract} 
The aim of this short note is to extend results by \nmnm{Denef}
and \nmnm{Loughran, Skorobogatov, Smeets} concerning  
refinements of a conjecture of \nmnm{Colliot-Th\'el\`ene}. The
problem is about giving necessary and sufficient conditions for
morphisms of varieties to be surjective on local points for almost
all localizations. 
\end{abstract}


\keywords{Function fields, valuations and prime divisors, Galois theory, 
localizations of global fields, Ax--Kochen--Ershov Principle, 
ultraproducts, pseudo finite fields, rational points of varieties.
\vskip2pt
{\it 2010 MSC.\/} Primary: 11Gxx, 11Uxx 14Dxx,  14Gxx}
\vskip4pt
\thanks{\textit{\textbf{NOT funded by the NSF}\/}: The 
ANT research directors (Douglas, Hodges, Libgober 
and Pollington), in their higher understanding of this 
matter, overruled the expert reviewers and panelists 
recommendation and decided not to fund this research.}
\date{Variant of August 31, 2019.}

\maketitle

\section{Introduction/Motivation}
The aim of this note is to shed new light on a 
conjecture by Colliot-Th\'el\`ene, cf.~\cite{CT}, 
concerning the image of local rational points under 
dominant morphisms of varieties over global
fields. The precise context is as follows: 
\vskip2pt
\itm{20}{
\item[-] Let $k$ be a {\it global field,\/} $\lvP(k)$ be the places 
of $k$, and $\kv$ be the completion of $k$ at $v\in \lvP(k)$. 
\vskip2pt
\item[-] Let $f:X \to Y$ be a morphism of $k$-varieties. 
}
For every $v\in\lvP(k)$, the $k$-morphism
$f$ gives rise to a canonical map $f^\kv:X(\kv)\to Y(\kv)$.
There are obvious examples showing that, in general, $f^\kv$ 
is not surjective, e.g.\ $f:\lvP^1_\lvQ\to\lvP^1_\lvQ$ of
degree two. Therefore, for $f:X\to Y$ as above, it is natural to
consider the basic property:
\vskip7pt
\centerline{\Srj \ha{60} {\it $f^\kv:X(\kv)\to Y(\kv)$ 
is surjective for almost all $v\in\lvP(k)$.\/}\ha{80}}
\vskip7pt
\noindent
and to ask the following fundamental:
\vskip7pt
\centerline{{\bf Question:}  {\it Give {necessary 
and sufficient} conditions for $\,f:X\to Y$ to have 
property\/} \Srj.} 
\vskip7pt
\noindent
This problem was considered in a systematic way 
by \nmnm{Colliot-Th\'el\`ene}~\cite{CT}, under 
the following restrictive but to some extent natural 
hypothesis:
\begin{displaymath}
\hbox{\defi{$(*)$}\ha{27}} 
\left. \begin{array}{c}
\hbox{\it $k\,$ is a \textit{\textbf{number field}}, 
$X$, $Y$ are projective smooth integral 
$k$-varieties, and\ha{13}\/}\\
\hbox{\it $f:X\!\to Y$ is a dominant morphism with 
geometrically integral generic fiber.\ha{10}}
\end{array} \right.
\end{displaymath} 
In particular, if $L:=k(Y)$ is the function field of 
$Y\!$, the generic fiber $X_L$ of $f$ can be
viewed as an $L$-variety. In this notation, 
for morphisms $f:X\to Y$ satisfying~$(*)$, 
\nmnm{Colliot-Th\'el\`ene} considered the 
hypothesis~\defi{(CT)} and made the 
conjecture~\CCT\ below:
\begin{displaymath}
\hbox{\hb{40}\defi{(CT)}} \ha{30} 
\left. \begin{array}{c}
\hbox{\it For each discrete valuation $k$-ring 
$\,R\subset L$, and its residue field $\,\kappa_R$,}\\
\hbox{\it there is a \textbf{regular} flat 
$\,R\ha1$-model $\,\euX_R$ of $\,X_L$ 
whose special fiber $\,\euX_{\kappa_R}$}\\
\hbox{\it has an irreducible component 
$\euX_\alpha$ which is $\kappa_R$-geometrically 
integral.} 
\end{array} \right.
\end{displaymath}
\noindent
\textbf{Conjecture of Colliot-Th\'el\`ene \CCT.}
{\it Let $f:X\to Y$ be a dominant morphism~of~proper 
smooth geometrically integral varieties over a number
field~$k$, and suppose that hypotheses~$(*)$ 
and~{\rm\defi{(CT)}} are satisfied. Then $f:X\to Y$ 
has the property\/} \Srj.
\vskip7pt
In a recent paper, \nmnm{Denef}~\cite{Df2} proved 
a stronger form of the conjecture \CCT, by replacing the 
hypothesis~\defi{(CT)} by the weaker hypothesis~\defi{(D)} 
below. In order to explain \nmnm{Denef}'s result,  
recall the following terminology: Let $f:X\!\to Y$
be a morphism satisfying hypothesis~$(*)$. A 
\defi{(smooth) modification} of $f$ is any morphism $f':X'\to Y'$
satisfying hypothesis~$(*)$ such that there exist 
modifications (i.e., birational morphisms) $p:X'\to X$,
$q:Y'\to Y$ satisfying $q\circ f'=f\circ p$. Given 
a smooth modification $f':X'\!\to Y'$ of $f$, for every Weil 
prime divisor $E'\subset Y'\!$, and the Weil 
prime divisors $D'$ of $X'$ above $E'\!$, consider: 
First, the multiplicity $e(D'|E')$ of $D'$ 
in ${f'}^*(E')\in\Div(X')$; second, the restriction
$f'_{D'}:D'\to E'$ of $f'$ to $D'\subset X'\!,$ 
which is a morphism of integral $k$-varieties. 
Finally, for $f:X\to Y$ satisfying~$(*)$, it turns out 
that the hypothesis \defi{(CT)} above implies that 
following obviously weaker hypothesis:
\begin{displaymath}
\hbox{\defi{(D)}} \ha{5} 
\left. \begin{array}{c}
\hbox{\it For every modification $f'$ and every Weil 
prime divisor $E'\subset Y'\!$, there is $D'$ above }\\
\hbox{\it $E'$ with $e(D'|E)=1$ and $f'_{D'}:D'\to E'$ 
having geometrically integral generic fiber.}
\end{array} \right.
\end{displaymath}
\begin{theorem*}
   [\nmnm{Denef}~\cite{Df2}, Main Theorem 1.2]
$\ha0$\vskip0pt
Let $f:X\to Y$ satisfy the hypotheses~$(*)$ 
and~{\rm\defi{(D)}}. Then $f$ has the property {\rm\Srj}.
\end{theorem*}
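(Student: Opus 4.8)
The plan is to convert the assertion \Srj, which ranges over almost all places, into a statement about a single model‑theoretically tame valued field produced as an ultraproduct. Fix a finite $S\subset\lvP(k)$ outside of which $f$ spreads out to a flat morphism $\euf\colon\euX\to\euY$ of $\clO_{k,S}$‑schemes realizing the properties in $(*)$ fibrewise, and for a non‑principal ultrafilter $\clU$ on $\lvP(k)\setminus S$ set $\mathbf{K}:=\prod_v k_v/\clU$ and $\mathbf{F}:=\prod_v\kv/\clU$. Surjectivity of $f^{\kv}$ is expressed by a first‑order sentence with parameters from $\clO_{k,S}$ (inserted diagonally); so by {\L}o\'s's theorem, and since a subset of $\lvP(k)\setminus S$ is cofinite exactly when it lies in every non‑principal ultrafilter, property \Srj\ is \emph{equivalent} to: for every such $\clU$, the map $f_{\mathbf{K}}\colon X(\mathbf{K})\to Y(\mathbf{K})$ is surjective. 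Here $\mathbf{K}$ is henselian (validity of Hensel's lemma for polynomials of each fixed degree is first‑order), its value group is a $\lvZ$‑group, and by Ax's theorem its residue field $\mathbf{F}$ is a pseudo‑finite field of characteristic $0$; moreover the diagonal $k\hookrightarrow\mathbf{K}$ has trivial valuation, hence induces $k\hookrightarrow\mathbf{F}$.

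So fix such a $\mathbf{K}$ and a point $y\in Y(\mathbf{K})$. Since $Y$ is proper over $k$ and $\clO_{\mathbf{K}}$ is a valuation ring with fraction field $\mathbf{K}$, the valuative criterion extends $y$ to $\tl y\in Y(\clO_{\mathbf{K}})$, and it is enough to produce a section of the proper $\clO_{\mathbf{K}}$‑scheme $X_{\tl y}:=X\times_{Y,\tl y}\Spec\clO_{\mathbf{K}}$. Because $\clO_{\mathbf{K}}$ is henselian, such a section exists once $(X_{\tl y})_{\mathbf{F}}$ has an $\mathbf{F}$‑point lying on a multiplicity‑one component and smooth in that component; and since $\mathbf{F}$ is pseudo‑finite, hence pseudo‑algebraically closed, the smooth locus of any geometrically integral $\mathbf{F}$‑variety has an $\mathbf{F}$‑point. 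Thus it suffices to show that the special fibre $(X_{\tl y})_{\mathbf{F}}$ is \emph{split}: that it contains a geometrically integral component of multiplicity one.

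To exhibit such a component we feed in hypothesis \defi{(D)}. The arc $\tl y$ picks out the image $\eta\in Y$ of its generic point — whence $Z:=\oli{\{\eta\}}$ with $\kappa(\eta)=k(Z)$ — and a valuation $w$ on $k(Z)$ trivial on $k$; the generic fibre of $X_{\tl y}\to\Spec\clO_{\mathbf{K}}$ is the $k(Z)$‑variety $X_\eta:=f^{-1}(\eta)$ base‑changed to $\mathbf{K}$, while $(X_{\tl y})_{\mathbf{F}}$ is the $\mathbf{F}$‑special fibre of a model of $X_\eta$ over the valuation ring of $w$. Now replace $f$ by a smooth modification $f'\colon X'\to Y'$ obtained by blowing up centres lying over $Z$; this is legitimate, since $Y'\to Y$ being a composite of blow‑ups along smooth centres, every $\mathbf{K}$‑point of $Y$ lifts to $Y'(\mathbf{K})$ (the fibres of such a blow‑up over points of the centre are projective spaces), and a lift over $X'$ maps under $p\colon X'\to X$ to a lift of $y$ over $X$. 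In the principal case $Z=Y$ one arranges, after further blow‑ups following the centre of $w$, that $w=v_{E'}$ is the divisorial valuation of a Weil prime divisor $E'\subset Y'$; then $\clO_{\mathbf{K}}$ dominates the discrete valuation ring $\clO_{Y',E'}$, the arc $\tl y$ factors through $\Spec\clO_{Y',E'}$, and $(X_{\tl y})_{\mathbf{F}}$ is the base change to $\mathbf{F}\supseteq k(E')$ of the fibre $(f')^{-1}(\eta_{E'})$ of $f'$ over the generic point of $E'$. By \defi{(D)}, applied to $f'$ and $E'$, that fibre contains the component $(D')_{\eta_{E'}}$ with multiplicity $e(D'|E')=1$ and geometrically integral over $k(E')$; after base change this is the sought split component of $(X_{\tl y})_{\mathbf{F}}$, and Ax's theorem together with Hensel's lemma then finish, as in the previous paragraph. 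The case $Z\subsetneq Y$ is reduced to this by induction on $\dim Y$, and a general, non‑divisorial $w$ by an Ax--Kochen--Ershov argument.

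The main obstacle is precisely this last reduction: passing from the \emph{given} point $y$ — equivalently from the given valuation $w$ of a possibly proper subvariety $Z\subsetneq Y$, of arbitrary rank and rational rank — to a configuration in which \defi{(D)} applies verbatim. Here one must (i) reduce $w$ to a rank‑one divisorial valuation of a model of $Y$ by an unbounded sequence of blow‑ups, which is where hypothesising \defi{(D)} for \emph{all} modifications is essential; (ii) set up a clean induction on $\dim Y$ for points not dominating $Y$, checking that \defi{(D)} for $f$ yields the input needed in the inductive step after suitable further blow‑ups rendering the relevant higher‑codimension centres into divisors; and (iii) make the Ax--Kochen--Ershov transfer precise, peeling off the henselization along the successive residue valuations of $w$ so that splitness descends to the pseudo‑finite residue field $\mathbf{F}$. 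By contrast, the ultraproduct/{\L}o\'s reduction, the appeal to pseudo‑finiteness of $\mathbf{F}$, and the concluding Hensel lift are routine.
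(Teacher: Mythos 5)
Your reduction to ultraproducts (\L o\'s, properness, henselianity, Ax's theorem on the residue field, and the concluding Hensel lift from a smooth point on a multiplicity-one geometrically integral component) is sound and matches the first half of the paper's strategy (Fact~\ref{fkt1} and the endgame of the Key Lemma). But the proof has a genuine gap, and you name it yourself: the passage from an arbitrary point $y\in Y(\mathbf{K})$ --- equivalently from an arbitrary $k$-trivial valuation $w$ on $\kappa(\eta)$, of unbounded rank and rational rank --- to a configuration where \defi{(D)} applies is exactly the hard content of the theorem, and your items (i)--(iii) are a to-do list rather than an argument. In particular, a composite/high-rank valuation on $k(Y)$ cannot in general be turned into a divisorial valuation of a smooth model by finitely many blow-ups, so ``an unbounded sequence of blow-ups'' does not terminate in the configuration you need, and the ``Ax--Kochen--Ershov argument'' for non-divisorial $w$ is precisely the step that must be supplied.

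The paper resolves this obstacle differently, and the idea you are missing is the following: instead of working with $\mathbf{K}=\Ust k$ directly, one uses AKE (Fact~\ref{fkt3}) to replace it by the elementarily embedded subfield $\Us k=\Uj\kappa'(\Uj\pi)^h$, a henselized rational function field over a field of representatives of the residue field. The valuation $\Us v$ on $\Us k$ has no transcendence defect, so its restriction to any finitely generated subfield $\kappa(y)\hookrightarrow\Us k$ is automatically either trivial or a \emph{prime divisor} of $\kappa(y)|k$ (Fact~\ref{fkt7}); arbitrary-rank valuations simply never arise. This converts the problem into a statement about prime divisors of the residue fields $\kappa(y)$ for all scheme points $y\in Y$ (the pseudo-splitness hypothesis \Bstar), which is then checked against the divisor-theoretic hypothesis via the comparison of $\clD(L|k)$ with divisors on modifications (Theorem~\ref{thm3} and the Fact in the Introduction); note that \defi{(D)} implies \defi{(LSS)} implies that $\clD(L|k)$ is pseudo-split in $\clD(K|k)$. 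Without either this AKE reduction or Denef's own desingularization/motivic-integration machinery, the case of a general $y$ and general $w$ remains open in your write-up, so the proposal does not constitute a proof.
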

%
%
Finally recall the very recent results by 
\nmnm{Loughran--Skorobogatov--Smeets}~\cite{LSS}~which, 
for morphisms $f:X\to Y$ satisfying the
hypothesis~$(*)$ above, give \textit{\textbf{necessary 
and sufficient conditions}\/} such that $f:X\to Y$
has property \Srj. Namely, following~\cite{LSS}, 
in the notation introduced above, let $f':X'\to Y'$ 
be a smooth modification of $f:X\to Y\!$. For a 
Weil prime divisor $E'$ of $Y'\!$ and a Weil 
prime divisor $D'$ of $X'$ above $E'\!$, let
$k(D')\md k(E')$ be the function field extension 
defined by the dominant map $f'_{\!D'}:D'\to E'$. 
One says that $E'$ is \defi{pseudo-split} under 
$f':X'\to Y'\!$, if for every element of the absolute 
Galois group $\,\sigma\in G_{k(E')}$, there is 
some Weil prime divisor $D'$ of $X'$ above 
$E'$ satisfying: 
\[
e(D'|E')=1 \ \hbox{ and } \ k(D')
\otimes_{k(E')}\!\oli{k(E')} 
\hbox{ has a factor stabilized by } \sigma.
\]
Following \nmnm{Loughran--Skorobogatov--Smeets}
\cite{LSS}, consider the hypothesis:
\begin{displaymath}
\hbox{\defi{(LSS)}} 
\left. \begin{array}{c}
\hbox{\it For all smooth modifications $f'$ of $f\!$,
all Weil prime divisors $E'\!\subset Y'\!$ are pseudo-split.}
\end{array} \right.
\end{displaymath}
\noindent
Note that if $D'\!, E'$ satisfy hypothesis~\defi{(D)}, 
then $k(D')\md k(E')$ is a regular field extension, hence 
$k(D')\otimes_{k(E')}\oli{k(E')}$ is a field stabilized by 
all $\sigma\in G_{\kappa_{E'}}$\hb2 (and $E'$ is called 
\defi{split}). Hence~hypothesis~\defi{(D)} implies 
\defi{(LSS)}, leading to the following sharpening of 
\nmnm{Denef}'s result above:
\begin{theorem*}[\nmnm{Loughran--Skorobogatov--Smeets}
\cite{LSS},  Theorem 1.4] 
$\ha0$\vskip0pt
Let $f\!:\!X\!\to Y$ satisfy~$(*)$. Then $f$ satisfies 
hypothesis~{\rm\defi{(LSS)}} \ iff \ $f$ has property {\rm\Srj}.
\end{theorem*}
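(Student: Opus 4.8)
The plan is to prove both implications by reducing, via the Ax--Kochen--Ershov transfer principle and ultraproducts, to a statement about rational points over pseudo-finite fields, exactly in the spirit of Denef's argument but pushed to an equivalence. For the direction \defi{(LSS)}$\Rightarrow$\Srj: I would first replace $f$ by a convenient smooth modification, reducing to the situation where $Y$ has a smooth proper model $\clY$ over a ring of $S$-integers $\clO_{k,S}$ and $f$ extends to $\clX\to\clY$ with controlled singular locus; this is where one invokes that \defi{(LSS)} is a condition on \emph{all} modifications, so passing to a better $f'$ preserves it. For almost all $v$, a $\kv$-point of $Y$ reduces to an $\lvF_v$-point $\bar y$ of $\clY$, lying on some stratum whose generic point is a Weil prime divisor $E'$ (after a Hironaka-type resolution allowing us to assume $\bar y$ is a regular point of the special fibre, lying on exactly one component); by Hensel's lemma and smoothness of $Y$ one lifts freely, so the problem becomes: does the fibre $X_{\bar y}$ have an $\lvF_v$-point whose position lets it lift to $X(\kv)$ mapping to the given $\kv$-point? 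Greenberg's theorem and Hensel reduce this to producing a smooth $\lvF_v$-point on the special fibre of a model of $X_{k(E')v_{E'}}$, i.e. to a point on a geometrically integral component of multiplicity one — and the pseudo-split hypothesis applied with $\sigma=\Frob_v$ (acting on $\oli{k(E')}$ through the residue field $\lvF_v$) guarantees such a component is split over $\lvF_v$, hence has an $\lvF_v$-point by Lang--Weil for $v$ large. Running this over an ultraproduct of the $\kv$ (whose residue fields ultraproduct to a pseudo-finite field $\clF$ with a prescribed Frobenius-like automorphism) converts "almost all $v$" into a single statement over $\clF$, and the pseudo-split condition is precisely what is needed to solve it there.

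For the converse \Srj$\Rightarrow$\defi{(LSS)}: suppose some smooth modification $f':X'\to Y'$ has a Weil prime divisor $E'$ that is not pseudo-split, witnessed by some $\sigma_0\in G_{k(E')}$. The idea is to manufacture, for infinitely many $v$, a $\kv$-point of $Y$ (equivalently of $Y'$) that reduces onto $E'$ with residual Frobenius conjugate to $\sigma_0$, and over which $f$ has no $\kv$-point. Concretely: the failure of pseudo-splitness says that for \emph{every} $D'$ above $E'$, either $e(D'|E')>1$ or no factor of $k(D')\otimes_{k(E')}\oli{k(E')}$ is $\sigma_0$-stable; translated through the ultraproduct, this says that the fibre of a model of $X'$ over a generic $\clF$-point of $E'$ (with $\clF$ pseudo-finite carrying the automorphism corresponding to $\sigma_0$) has \emph{no} smooth $\clF$-rational point on any multiplicity-one component and no $\clF$-point at all on the union of the components — so by Ax--Kochen--Ershov there is no $\kv$-point for $v$ in a set of positive density. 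Choosing a $\kv$-point of $Y'$ hitting $E'$ with the right Frobenius (possible by Chebotarev applied to the extension of $k(E')$ cutting out $\sigma_0$, combined with Hensel on the smooth model of $Y'$) yields a local point of $Y$ not in the image of $f^\kv$, contradicting \Srj.

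The main obstacle, in both directions, is the bookkeeping that lets one go from a general $\kv$-point of $Y$ to one in "good position": one must argue that, after passing to a suitable smooth modification, almost every local point of $Y$ specializes to a regular point of the special fibre lying on a single prime divisor $E'$ of a nice model, and that the value-group/residue-field data of the induced valuation on $k(E')$ is exactly the unramified valuation whose residue field is $\lvF_v$ — this is what makes $\sigma=\Frob_v$ meaningful and matches the \defi{(LSS)} condition. Handling higher-codimension strata and divisors of multiplicity $>1$ (which could a priori trap a local point with no lift) requires the resolution/modification step to be done carefully; this is precisely why the hypothesis is stated for \emph{all} smooth modifications rather than a single model, and verifying that one may always improve $f'$ without losing control is the delicate point. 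The Lang--Weil and Hensel/Greenberg inputs, and the AKE transfer, are then essentially routine given the earlier Denef-type machinery.
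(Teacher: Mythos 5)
Your outline follows, in essence, the route of the original proof in \cite{LSS} (building on \cite{Df2}): spread $f$ out over a ring of $S$-integers, reduce local points modulo $v$, put the reduction in good position on a Weil prime divisor $E'$ of a suitable smooth modification, apply the pseudo-split hypothesis to $\sigma=\Frob_v$, and conclude by Lang--Weil and Hensel/Greenberg, with Chebotarev driving the converse. That is a genuinely different route from the one taken in this paper, which quotes the LSS theorem as known and instead reproves a generalization of it (Theorems~\ref{thm0}, \ref{thm1}, \ref{thm2}) by a valuation-theoretic argument that deliberately avoids integral models, desingularization of special fibres, and point counts. Concretely: Fact~\ref{fkt1} converts \Srj\ into surjectivity of $f^{\Ust k}$ for all ultraproducts of completions; AKE (Fact~\ref{fkt3}) replaces $\Ust k$ by the small subfield $\Us k=\Uj\kappa'(\Uj\pi)^h$, a henselization of a rational function field over a field of representatives of the pseudo-finite residue field; since the valuation of $\Us k$ has no transcendence defect, a point of $Y(\Us k)$ supported at $y$ induces a prime divisor (or the trivial valuation) $w$ of $\kappa(y)\md k$ together with an embedding of its residue field into $\Uj\kappa$, and pseudo-splitness is exactly what is needed to extend this data to some $\kappa(x)$, $x\in X_y$, and then to build $\kappa(x)\hra\Us k$ explicitly out of Gauss valuations, fields of representatives, and unramified extensions of henselizations (Key Lemma~\ref{thekeylmm}); Theorem~\ref{thm3} then translates between the divisorial condition of \defi{(LSS)} (read through the Riemann--Zariski space) and the fiberwise one. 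What each approach buys: yours stays inside classical arithmetic geometry but needs resolution, models over $\clO_{k,S}$, Lang--Weil, and careful stratification of special fibres; the paper's removes all smoothness/properness hypotheses and extends to more general base fields and to positive characteristic (conditionally), at the price of the AKE input and valuations without defect.

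As a proof, however, your converse direction has a real gap. Failure of pseudo-splitness at $\sigma_0$ only says that for every $D'$ over $E'$, either $e(D'|E')>1$ or no factor of $k(D')\otimes_{k(E')}\oli{k(E')}$ is stabilized by $\sigma_0$; from this you must deduce that a suitably chosen $\kv$-point of $Y'$ reducing onto $E'$ with Frobenius $\sigma_0$ has \emph{no} $\kv$-preimage whatsoever. A hypothetical preimage in $X'(\kv)$ could specialize to a singular point of the special fibre, to an intersection of components, or to a component of multiplicity $>1$, and your assertion that the fibre then has ``no $\clF$-point at all on the union of the components'' is not what the hypothesis delivers. The missing step is the valuation-theoretic one: any preimage of a point at which the local parameter of $E'$ becomes a uniformizer must induce a place dominating some $D'$ with $e(D'|E')=1$ (a multiplicity $>1$ forces a ramification contradiction), and its residue field must then produce a $\sigma_0$-stable factor of $k(D')\otimes_{k(E')}\oli{k(E')}$. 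This is exactly what assertion~2) of Key Lemma~\ref{thekeylmm} and the ``$\Leftarrow$'' half of Theorem~\ref{thm3} carry out; without an argument of this kind the converse implication is not established.
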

The aim of this note is to provide a different approach
to the basic problem and \CCT\ considered above, which 
among other things allows the following:
\vskip5pt
\itm{25}{
\item[$\bullet$] There are \textit{\textbf{no}\/} 
smoothness/properness/irreducibility hypotheses 
on the $k$-varieties $X,Y\!$.
\vskip2pt
\item[$\bullet$] Hypotheses \defi{(D)}, \defi{(LSS)} 
can be replaced by the \textit{\textbf{weaker 
hypotheses}\/} \Bstar, \Bstar$_{\Sgm k}$ below.
\vskip2pt
\item[$\bullet$] $k$ can be \textit{\textbf{more 
general,}\/} e.g.\ a finitely generated field
of characteristic zero, or finitely generated over  
a PAC field of characteristic zero.
\vskip2pt
\item[$\bullet$] Finally, in positive characteristic
$p>0$, we give sufficient condition for \Srj\ to hold, 
e.g.\ in the case $k$ is finitely generated, or finitely
generated over a PAC field.
}

\vskip5pt
In order to proceed, let us introduce/consider 
notation as follows: Let $N\md k$ be a function 
field over an arbitrary base field $k$. For a 
valuation $v$ of $N$, let $\clO_v,\eum_v$ be its 
valuation ring/ideal, $vN$ denote the valuation 
group, and $\Nv$ be the residue field of $v$. 
A valuation $v$ of $N$ is called a $k$-valuation
if $v$ is trivial on $k$, or equivalently, 
$k\subset\clO_v$. The space of $k$-valuations 
$\Val k N$ of $N|k$, called the Riemann--Zariski 
space of $N|k$, carries naturally the Zariski topology 
via the following geometric interpretation:
\vskip7pt
{\it Let $(Z_\alpha)_\alpha$ be any cofinal family 
of proper $k$-models of $Z_\alpha$ w.r.t.\ the 
domination relation. 
\vskip0pt
For $v\in\Val k N$, let $z_{\alpha,v}\in Z_\alpha$ 
be the center of $v$ on $Z_\alpha$. Then one has:\/} 
\[
\Val k N=\plim{\alpha}\,Z_\alpha,\quad
v=(z_{\alpha,v})_\alpha,\quad\clO_v=
\ilim\alpha\,\clO_{z_{\alpha,v}}, \ \
\eum_v=\ilim\alpha\,\eum_{z_{\alpha,v}}.
\] 

A $k$-valuation $v\in\Val k N$ is called a 
\defi{prime divisor} of $N\md k$ if there is a
normal model $Z$ of $N\md k$ and a Weil prime 
divisor $D$ of $Z$ with $\clO_v=\clO_{\eta_D}$, 
the local ring of the generic point $\eta_D\in Z$ 
of $D$. In particular, $vN=\lvZ$, and $\Nv=k(D)$ is the 
function field of the $k$-variety~$D$, thus satisfying 
$\td(\Nv|k)=\td(N|k)-1$. For $v\in \Val k N$ the 
following are equivalent:
\vskip2pt
\itm{30}{
\item[i)] $v$ is a prime divisor of $N|k$.
\vskip2pt
\item[ii)] In the above notation, the center 
$z_{\alpha,v}$ of $v$ on some $Z_\alpha$ has 
$\codim_{Z_\alpha}(z_{\alpha,v})=1$.
\vskip2pt
\item[iii)] $\td(\Nv\md k)=\td(N\md k)-1$.
}
\vskip5pt
Let $\clD(N|k)$ denote the {\it set of prime divisors of 
$N|k$ together with the trivial valuation.\/}
\vskip7pt
For extensions of function fields $M\md N$ over 
$k$, the restriction $\Val k M\to\Val k N$, 
$v\mapsto v|_N$ is surjective, and defines a 
surjective map $\clD(M|k)\to\clD(N|k)$. In 
particular, if $v\in\clD(M|k)$ and $w=v|_N$, then 
$e(v|w):=(vM:wN)$ is finite if either $v$ is trivial 
or $w$ is non-trivial, and there is a canonical 
$k$-embedding of the residue function fields
$\Lw:=\kappa(w)\hra\kappa(v)=:\Kv$.

%
%
We say that $w\in\clD(L|k)$ is \defi{pseudo-split} 
in $\clD(M|k)$, if for every $\sigma\in G_{\Lw}$, 
there is some $v\in\clD(M|k)$ satisfying: $w=v|_N$,  
$e(v|w)=1$ if $w$ is non-trivial, and 
$M\hb1v\otimes_{\Lw}\oli{L\ha2}\hb3w$ has a factor 
which is a field stabilized by $\sigma$. And we say
that $\clD(L|k)$ is pseudo-split in $\clD(M|k)$, 
if all $w\in\clD(L|k)$ are pseudo-split in $\clD(M|k)$.
\vskip2pt
The above notion of pseudo-splitness relates to the
one from \cite{LSS} mentioned above as follows: 
Let $f:X\to Y$ be a dominant morphism of 
projective smooth varieties over a~number field 
$k$, and setting $K=k(X)$, $L=k(Y)$, let $K\md L$ 
be the corresponding extension of function fields. Let  
$f_\alpha\!:\!X_\alpha\!\to\! Y_\alpha$, $\alpha\in I$ be
the (projective) system of all the smooth modifications 
of~$f$ satisfying the hypothesis~$(*)$. By Hironaka's 
Desingularization Theorem, $(X_\alpha)_\alpha$ 
and $(Y_\alpha)_\alpha$ are cofinal (w.r.t.\ the 
domination relation) in the system of all the 
proper models of $K|k$, respectively~$L|k$. 
Hence by mere definitions one has: 
\vskip5pt\noindent
{\bf Fact.}
{\it The hypothesis 
{\rm\defi{(LSS)}} implies that $\clD(L\md k\big)$ 
is pseudo-split in $\clD(K\md k)$.\/}
\vskip5pt
Finally, let $Z$ be an integral $k$-variety, and 
$N=k(Z)$ be its function field. A point $z\in Z$ is 
called \defi{valuation-regular-like (v.r.l.)}, if there 
exist $\tlv\in\Val k N$ and $v\in\clD(N|k)$ both 
having center $z\in Z$ such that $N\hb1\tlv=\kappa(z)$,
$\Nv|\kappa(z)$ is a regular field extension, and 
$v(u)=1$ for all $u\in\eum_z\backslash\eum_z^2$. 
Notice that {\it regular points $z\in Z$ are~v.r.l.\/}: 
Indeed, if $(t_1,\dots,t_d)$ is a system of regular 
parameters of $\clO_z$, the canonical $k$-embedding  
$K\hra \kappa(z)\lps{t_1}\dots\lps{t_d}$ defines
a valuation $\tlv\in\Val k K$ with $K\hb1\tlv=\kappa(z)$.
Further, the \defi{degree valuation} $v$ defined
by $(\eum_z^i)^\nix_i$ has as residue field the rational 
function field $\Kv=\kappa(z)(t_i/t_d)_{i<d}$ and satisfies 
$v(u)=1$ for all $u\in\eum_z\backslash\eum_z^2$. 
We say that $Z$ is valuation-regular-like, 
if all $z\in Z$ are v.r.l.\ha3points. Note that regular 
$k$-varieties are valuation-regular-like, but the 
converse does not hold: Indeed, rational double points 
and rational cusps of curves are v.r.l.\ points, but not 
regular points.
\vskip2pt
This being said, a first result extending/generalizing
and shedding new light on the afore mentioned 
\cite{Df2}, Main Theorem 1.2, and \cite{LSS},
Theorem 1.4, is as follows: 
\begin{theorem}
\label{thm0}
Let $K\md L$ be an extension of function fields 
over a number field $k$ defined by a~dominant 
morphism $f:X\to Y$ of proper 
valuation-regular-like $k$-varieties. Then $f$ has 
property {\rm\Srj} \ iff \ $\clD(L|k)$ is pseudo-split 
in $\clD(K|k)$. In particular, the property {\rm\Srj} 
for dominant morphisms of proper valuation-regular-like 
$k$-varieties is \textit{\textbf{birational}}. 
\end{theorem}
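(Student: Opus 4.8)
The assertion is an equivalence, and its right--hand side --- that $\clD(L|k)$ is pseudo-split in $\clD(K|k)$ --- manifestly depends only on the extension $K\md L$ of function fields over $k$, not on the chosen proper models; so once the equivalence is established, the final sentence (that \Srj\ is birational for dominant morphisms of proper valuation-regular-like $k$-varieties) is automatic. The plan has three parts: (i) show that \Srj\ is itself a birational invariant of dominant morphisms of \emph{proper valuation-regular-like} $k$-varieties; (ii) using (i) together with Hironaka's Desingularization Theorem, reduce to the case where $f:X\to Y$ is a smooth projective morphism satisfying hypothesis~$(*)$; (iii) in that case deduce the equivalence from \nmnm{Loughran--Skorobogatov--Smeets}, Theorem~1.4 (cf.~\cite{LSS}), together with the Fact above and its converse. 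Part (i) is the new content and is where ``valuation-regular-like'' is used in an essential way; parts (ii)--(iii) are then routine.

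For (ii)--(iii): by (i) we may freely replace $X,Y$ by any proper valuation-regular-like models with the same function fields. Desingularizing $Y$, then resolving the induced rational map and desingularizing its source, we obtain a smooth projective modification $f':X'\to Y'$ of $f$ with $k(X')=K$, $k(Y')=L$. If the generic fibre of $f'$ is geometrically integral, then $f'$ satisfies~$(*)$. If not, let $L'$ be the relative algebraic closure of $L$ in $K$ and $X'\to Y_1\to Y'$ the Stein factorization (so $Y_1\to Y'$ is finite with $k(Y_1)=L'$); after desingularizing $Y_1$ --- again legitimate by (i) --- the morphism $X'\to Y_1$ satisfies~$(*)$, and one checks that \Srj\ for $f'$ amounts to \Srj\ for $X'\to Y_1$ together with surjectivity of $Y_1(\kv)\to Y'(\kv)$ for almost all $v$, the latter being governed, by a Chebotarev--Lang--Weil argument over $Y'$, exactly by the part of the right--hand side concerning the trivial valuation of $L$. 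So we may assume $f$ is smooth projective and satisfies~$(*)$. Then \nmnm{Loughran--Skorobogatov--Smeets}, Theorem~1.4, says $f$ has \Srj\ iff $f$ satisfies~\defi{(LSS)}; and \defi{(LSS)} is in turn equivalent to $\clD(L|k)$ being pseudo-split in $\clD(K|k)$. Indeed, one implication is the Fact above; for the converse one uses that every prime divisor of $K|k$ lying over a Weil prime divisor $E'\subset Y'$ is realized --- by Hironaka --- as a Weil prime divisor, with the same multiplicity over $E'$ and the same residue extension, on some smooth modification of $f$ over $Y'$, while for a fixed $Y'$ the property ``$E'$ is pseudo-split under $f'$'' is unchanged under smooth modification of the source over $Y'$ (any two such sources have birational fibres over the generic point of $E'$, hence the same residue fields and geometric components of the divisors above $E'$, with matching multiplicities). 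This yields the equivalence, and with it the ``in particular'' clause. (In particular one recovers \nmnm{Denef}'s result, since hypothesis~\defi{(D)} forces \defi{(LSS)}, hence pseudo-splitness.)

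It remains to sketch (i) --- the main obstacle. One first reduces \Srj\ for a proper $f:X\to Y$ to the statement that for almost all $v$ and every $y\in Y(\kv)$ the fibre $f^{-1}(y)$ has a $\kv$-rational point, and then transports such local point data across the birational morphisms $p:X'\to X$, $q:Y'\to Y$ relating two proper valuation-regular-like models. Over the open loci where $p$ and $q$ are isomorphisms --- whose complements are closed and defined over a suitable ring of $S$-integers $\clO_{k,S}$ --- this transport is free. Over the complementary loci one argues pointwise at a valuation-regular-like point $z$ (with $N$ the function field of the ambient variety): the valuation $\tlv\in\Val k N$ with $N\hb1\tlv=\kappa(z)$ lets a $\kv$-point centred at $z$ be spread out towards the generic point of the prime divisor $v$ of the v.r.l.\ data, and along $v$ --- where $\Nv|\kappa(z)$ is regular and $v(u)=1$ for all $u\in\eum_z\backslash\eum_z^2$ --- the local picture is, for point--counting purposes, the same as in the regular case, so that the Ax--Kochen--Ershov principle combined with Lang--Weil estimates on the relevant residue varieties lets one lift and specialize $\kv$-points for all but finitely many $v$. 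The genuinely hard point is precisely this step at \emph{non-regular} valuation-regular-like points --- rational double points, rational cusps of curves, and so on --- where $\kappa(z)$ is not the residue field of any system of regular parameters on the variety, and where only the existence of the pair $(\tlv,v)$ guaranteed by the v.r.l.\ hypothesis --- not smoothness --- keeps the approximation argument running. Granting (i), the reductions of the previous paragraphs go through and the theorem follows.
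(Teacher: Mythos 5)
Your proposal takes a route the paper deliberately avoids (reduction to the smooth projective case plus an appeal to \nmnm{Loughran--Skorobogatov--Smeets}, Theorem 1.4), whereas the paper derives Theorem~\ref{thm0} from Theorem~\ref{thm1} (a model-theoretic statement about arbitrary morphisms of arbitrary $k$-varieties, proved via ultraproducts of completions and AKE) combined with Theorem~\ref{thm3} (a purely valuation-theoretic translation between fiberwise pseudo-splitness and pseudo-splitness of $\clD(L|k)$ in $\clD(K|k)$). In the paper the birational invariance of \Srj\ is a \emph{corollary} of the equivalence, not an input. More importantly, as written your argument has two genuine gaps.

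First, your step (i) --- that \Srj\ is a birational invariant of dominant morphisms of proper v.r.l.\ $k$-varieties --- is exactly the ``in particular'' clause of the theorem, and you use it as the engine of the whole reduction while only gesturing at its proof (``AKE combined with Lang--Weil estimates \dots lets one lift and specialize $\kv$-points''). The hard direction is: given that $f:X\to Y$ has \Srj\ and a proper birational $p:X'\to X$, produce for almost all $v$ and each $y\in Y(\kv)$ a point of $X'(\kv)$ over $y$; a $\kv$-point of $X$ centered in the exceptional locus of $p$ need not lift, and one must instead move to a different $\kv$-point of $X$ over the same $y$, which is precisely the content of the approximation/excision arguments of \nmnm{Denef} and of the paper's Key Lemma~\ref{thekeylmm}. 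You identify this as ``the genuinely hard point'' and then grant it; that is not a proof.

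Second, your step (iii) needs the converse of the paper's Fact, namely that pseudo-splitness of $\clD(L|k)$ in $\clD(K|k)$ implies \defi{(LSS)}, and the argument you give for it is incorrect. Given a fixed smooth modification $f':X'\to Y'$, a Weil prime divisor $E'\subset Y'$ with associated $w\in\clD(L|k)$, and $\sigma\in G_{k(E')}$, the abstract condition supplies a prime divisor $v\in\clD(K|k)$ over $w$ with $e(v|w)=1$ and the splitness property --- but $v$ need not be divisorial \emph{on $X'$}: its center on $X'$ can have codimension $\geqslant 2$ (equivalently $\Kv$ can be transcendental over the residue field of that center), in which case it furnishes no Weil prime divisor $D'\subset X'$ witnessing pseudo-splitness of $E'$. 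Your claim that any two smooth sources over $Y'$ have ``the same \dots divisors above $E'$, with matching multiplicities'' is false: blowing up introduces new exceptional divisors above $E'$, and the witnessing divisor for a given $\sigma$ may live only on a higher model. Establishing that the condition nevertheless descends to each fixed model is nontrivial (it is, in effect, what Theorems~\ref{thm1} and~\ref{thm3} together with the \cite{LSS} theorem jointly yield), so it cannot be waved through as ``routine''.
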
 
The above Theorem~\ref{thm0} is proved in
section~4, as a consequence of Theorem~\ref{thm3},
and the more general Theorem~\ref{thm1} below, 
which considers the property \Srj\ for morphisms 
of {\it general varieties over number fields.\/} In order 
to announce the latter result, we introduce notation 
and terminology as follows: Let $f:X\to Y$ be a 
morphism of arbitrary varieties over an arbitrary 
base field $k$, and let $X_y$ be the reduced fiber 
of $f$ at $y\in Y$. For $y\in Y$ and $x\in X_y$, we 
denote $L_y\!:=\kappa(y)$, $K_x\!:=\kappa(x)$,
hence $f$ defines canonically a $k$-embedding  
of function fields $K_x\md L_y$. In particular, 
one has the canonical restriction map 
$\clD(K_x|k)\to\clD(L_y|k)$, $v_x\mapsto w_y:
=v_x|_{L_y}$, and to simplify notation, we set 
$\,l_y\!:=L_yw_y$ and $\,k_x\!:=K_xv_x$, hence
$\clO_{v_y}\hra\clO_{v_x}$ gives rise to the  
canonical residue field $k$-embedding $k_x\md l_y$. 
\vskip5pt
%
We say that $w_y\in\clD(L_y|k)$ is \defi{pseudo-split} 
under $f$, if for every $\sigma\in G_{l_y}$ there are 
$x\in X_y$ and $v_x\in\clD(K_x|k)$ satisfying: 
$w_y=v_x|_{L_y}$, $e(v_x|w_y)=1$ if $w_y$ is 
non-trivial, and $k_x\!\otimes_{l_y}\hb3\oli{\,l\,}\!_y$ 
has a factor which is a field stabilized by $\sigma$. 
Further, we say that $y\in Y$ is pseudo-split under $f$
if all $w_y\in\clD(L_y|k)$ are pseudo-split under $f\hb1$, and 
that $f$ is pseudo-split if all $y\in Y$ are pseudo-split.
%
Finally consider the following hypothesis:
\begin{displaymath}
\ha{10}\Bstar \ha{65} 
\left. \begin{array}{c}
$\hb8$\hbox{\it $f:X\to Y$ is a pseudo-split 
morphism of $k$-varieties.\ha{110}}
\end{array} \right.
\end{displaymath}
\begin{theorem} 
\label{thm1}
Let $f:X\to Y$ be a morphism of arbitrary varieties over 
a number field~$k$. Then $f$ satisfies hypothesis \Bstar\ \ 
iff \ $f$ has property {\rm\Srj}. 
\end{theorem}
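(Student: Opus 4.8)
The plan is to reduce the statement to a "one variable at a time" / prime-divisor assertion over the localizations $\kv$, and then to use an Ax--Kochen--Ershov-type transfer together with the model theory of pseudo-finite fields (Ax's theorem) to detect local points. First I would set up the easy implication: if $f$ fails \Bstar, there is some $y\in Y$ and a prime divisor $w_y\in\clD(L_y|k)$ that is not pseudo-split under $f$, i.e.\ a $\sigma\in G_{l_y}$ not fixing any admissible factor. Choosing an integral model of $Z\subset\overline{\{y\}}$ realizing $w_y$ as a Weil prime divisor, Chebotarev over the global field $l_y$ (applied to the finite extensions of $l_y$ cut out by the relevant factors $k_x\otimes_{l_y}\overline{l_y}$) produces, for infinitely many places $v$ of $k$ and points of the reduction over $\mathbb F_v$, a Frobenius conjugate to $\sigma$ with no preimage of the required shape; by Lang--Weil/Hensel this point in $Y(\kv)$ has empty fiber in $X(\kv)$, so \Srj\ fails. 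This is the direction already essentially contained in \cite{LSS}, and I would cite and adapt their argument.

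For the substantive direction, assume \Bstar\ and prove \Srj. Here the idea is: fix $v$, a point $\bar y\in Y(\kv)$; one must find $\bar x\in X(\kv)$ above it. Passing to the valuation on $L_y=\kappa(y)$ induced by $\bar y$ (specializing $y$ to the closed point given by $\bar y$), one obtains a $k$-valuation, and by the center/prime-divisor dictionary recalled in the excerpt one may, after blowing up, assume it is controlled by a chain of prime divisors; the pseudo-split hypothesis at each $w_y\in\clD(L_y|k)$ supplies, for the Frobenius $\sigma=\Frob_v$ acting on the residue field $l_y$ (which for almost all $v$ is a large finite field, hence "close to" pseudo-finite), a prime divisor $v_x$ above $w_y$ with $e(v_x|w_y)=1$ and a $\Frob_v$-stable factor of $k_x\otimes_{l_y}\overline{l_y}$. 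The ramification condition $e=1$ is exactly what is needed to lift the point through the valuation (Hensel's lemma / formal smoothness of the relevant local ring), and the existence of a $\Frob_v$-stable factor is exactly the condition guaranteeing that factor has an $\mathbb F_v$-rational point by Lang--Weil. Iterating down the transcendence degree (induction on $\td(L_y|k)$, the base case $\td=0$ being the statement that a $0$-dimensional $k$-variety with a $\Frob_v$-fixed geometric component has a $\kv$-point) produces $\bar x\in X(\kv)$.

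To make the "for almost all $v$, $l_y$ behaves like a pseudo-finite field" step rigorous I would pass to a nonprincipal ultraproduct: the ultraproduct $\prod_{\UU}\kv$ of the completions (over a set of places of density $1$) is, by Ax--Kochen--Ershov, elementarily equivalent to a Henselian valued field whose residue field $\prod_{\UU}\mathbb F_v$ is a pseudo-finite field of characteristic $0$, and whose value group is a $\mathbb Z$-group; a point of $Y(\prod_\UU\kv)=\prod_\UU Y(\kv)$ and its image in the residue field give a point over a pseudo-finite field, where Ax's theorem (every geometrically integral variety over a pseudo-finite field has a rational point, more generally the pseudo-split condition guarantees a point) applies, and then AKE again lifts this back through the valuation. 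Finally \L os's theorem transfers the existence of a preimage in $X(\prod_\UU\kv)$ back to $X(\kv)$ for $\UU$-almost all $v$, which is the content of \Srj. I would phrase the induction so that the inductive hypothesis is applied not to $f$ itself but to the restriction $f_{D'}:D'\to E'$ along the chosen prime divisors, so that the transcendence degree genuinely drops.

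The main obstacle I expect is bookkeeping the valuation-theoretic lifting: one must pass from an honest $\kv$-point $\bar y$ (which specializes $y$ to a point of a model over the residue field $\kv$'s residue field, i.e.\ a point defined over a finite extension of $\mathbb F_v$, not literally a residue point of a prime divisor of $L_y|k$) to a statement about $\clD(L_y|k)$ and $G_{l_y}$, and back again. Concretely, the residue field $l_y$ of the induced valuation $w_y$ on $L_y$ need not be the residue field at $\bar y$; one needs that the specialization of $\bar y$ factors through the generic point of a prime divisor, which in general requires replacing $Y$ by a model dominating it and choosing the center carefully — this is where the passage to the Riemann--Zariski space $\Val k{L_y}$ and a cofinal system of models, as set up in the excerpt, does the work, but verifying that the pseudo-split data assemble compatibly along the cofinal system (so that the ultraproduct argument sees a single $\sigma\in G_{l_y}$) is the delicate point. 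The equicharacteristic-zero hypothesis on $k$ (number field) is what makes AKE and Hironaka available and keeps this manageable.
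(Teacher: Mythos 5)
Your overall strategy --- replace \Srj\ by surjectivity on points over ultraproducts $\prod_v \kv/\,\clU$ of the completions, invoke Ax--Kochen--Ershov to control the ultraproduct, and use the model theory of the (pseudo-finite, $\aleph_1$-saturated) residue field to convert the pseudo-split hypothesis into rational points --- is the paper's approach (Fact~\ref{fkt1}, Fact~\ref{fkt3}, Fact~\ref{fkt4}, Key Lemma~\ref{thekeylmm}), and you correctly identify that it bypasses the desingularization route of Denef and of Loughran--Skorobogatov--Smeets. However, two of your steps have genuine gaps as described.

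First, for \Srj$\ \Rightarrow\ $\Bstar\ you propose ``Chebotarev over the global field $l_y$.'' But $l_y$ is the residue field of a prime divisor of $L_y|k$, hence a function field of transcendence degree $\td(L_y|k)-1$ over the number field $k$; it is a global field only when that degree is $0$. One must either use a Chebotarev/Frobenian statement for finitely generated fields, or --- as the paper does --- dispense with reduction modulo places altogether: an element $\sigma\in G_{l_y}$ is encoded by an ultrafilter $\clU$ and an embedding of $l_y$ into the residue field of the ultraproduct with $\oli{\,l\,}\!_y^{\,\sigma}$ the relatively algebraic closure (Fact~\ref{fkt4-5}, Propositions~\ref{prp1} and~\ref{prp2}); one then manufactures a tailored point $\tilde y\in Y(\prod_v\kv/\,\clU)$ out of the data $(y,w_y,\jmath)$ using $N^h=\kappa_w(\pi)^h$ (Fact~\ref{fkt8}) and reads off the required $(x,v_x)$ from any preimage $\tilde x$. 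No Lang--Weil estimate is needed in either direction.

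Second, and more seriously, the ``chain of prime divisors'' difficulty you flag at the end is real and your proposal does not resolve it. The value group of the full ultraproduct is $\lvZ^{\Sgm k}\!/\,\clU$, a non-archimedean $\lvZ$-group, so the restriction of its valuation to an embedded copy of $L_y$ can be a \emph{composite} valuation of rank $>1$; a single application of \Bstar\ then does not suffice, and your proposed induction on transcendence degree is applied to restrictions $f_{D'}\colon D'\to E'$ that are no longer fibers of $f$, hence not covered by the hypothesis \Bstar\ as stated. The paper's essential move, absent from your sketch, is to use AKE not merely as ``the residue field is pseudo-finite'' but to replace the ultraproduct by the \emph{elementary} (hence, by Fact~\ref{fkt2}, interchangeable for the surjectivity question) subfield $\kappa'_{\clU}(\pi_{\clU})^h$, a Henselization of a rational function field in one variable over a field of representatives. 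That field has value group $\lvZ$ and no transcendence defect, so the induced valuation on $L_y$ is automatically trivial or a \emph{single} prime divisor of $L_y|k$ (Fact~\ref{fkt7}); one application of \Bstar\ at that $(y,w_y)$, together with $e(v_x|w_y)=1$ and Fact~\ref{fkt8}, then extends $L_y\hra\kappa'_{\clU}(\pi_{\clU})^h$ to $K_x$ directly, with no induction. Without this reduction the lifting step does not go through as you have written it.
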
 
We will prove actually a more general result, see 
Theorem~\ref{thm2} in section~3. The main point in 
our approach is to use Ax--Kochen--Ershov 
Principle (AKE) type results (together with 
some general model-theoretical principles about
rational points and ultraproducts of local fields), 
as originating from \cite{Ax, A-K1, A-K2}, 
see e.g.\ \cite{P-R} for details on AKE. Moreover, 
a weak form of AKE in positive characteristic, 
see hypothesis \defi{(qAKE)}$_{\Sgm k}$ after 
Fact~\ref{fkt3} below, implies that~\Bstar\ 
suffices for \Srj\ to hold. To the contrary, 
\cite{Df2} and \cite{LSS} are based on quite 
deep desingularization results, building on previous 
results and ideas, see e.g.\ \cite{Df1, L-S, Sk} 
aimed\ha2---\ha1among other things\ha1---\ha1at 
giving arithmetic geometry proofs of AKE.
\vskip5pt
Here is an enlightening example\ha2---\ha2pointed
out to me by \nmnm{Daniel Loughran}, where the 
above Theorem~\ref{thm1} applies, but the situation
is not covered by the previous methods. 
\begin{example}
Let $k=\lvQ$, $R=k[t]$, 
$X=\Proj R[T_0,T_1,T_2]/(T_0^2+T_1^2-t^2T_2^2)$,
$Y=\Spec R$. One checks directly that the 
canonical projection $f:X\to Y$ has the property 
\Srj, and $f$ is smooth and split above $y\in Y\!$ 
for $y\neq(t)$. But the point $x=(t_0,t_1,t)\in X$ 
above $y=(t)\in Y$ is not v.r.l., hence this situation 
is not covered by previous work. On the other 
hand, $f$ satisfies hypothesis~\Bstar\ha1: Namely, 
all $y\neq(t)$ are split under $f$, thus quasi-split 
under $f$; and for $y=(t)$ one~has 
$X_y\ni x=(t_0,t_1,t)\mapsto (t)=y\in Y\!$, 
$K_x=k=L_y$,~and $\clD(K_x|k)=\{v^0_k\}=\clD(L_y|k)$
consists of the trivial valuation $v^0_k$ of $k$ only. 
Hence $y$ is pseudo-split under $f$ in the 
sense defined above. 
\end{example}
\noindent
{\small{\bf Acknowledgements}. I would 
like to thank several people at the IHP 
special programs in the spring 2018 and 
summer 2019 (especially Zoe Chatzidakis, 
J.-L.\ha3Colliot-Th\'el\`ene, E.\ha3Hrushovski,
J.\ha3Koenigsmann, F.\ha3Loeser, D.\ha3Loughran,
B.\ha3Poonen, A.\ha3Skorobogatov, Arne 
Smeets, S.\ha3Starchenko, Tam\'as Szamuely, 
O.\ha3Witten\-berg) for discussions 
concerning the subject and content of this note.} 
\section{Notations and Basic Facts} 
\subsection{Abstract approximation results 
for points}$\ha0$
\vskip5pt
\noindent
We begin by recalling a few facts, which are/might be
well known to experts. See e.g.\ \cite{B-S}, \cite{Ch},
\cite{F-J}, Ch.7, for details on ultraproducts and 
other model theoretical facts.
\begin{fact}
\label{fkt1}
{\it Let $(k_i\md k)_{i\in I}$ be a family of 
field extensions, $\clP_I$ be a fixed prefilter on 
$I$, and for every ultrafilter $\,\clU$ on $I$ with 
$\,\clP_I\subset\clU$, let $\Ust k:=\prod_{i\in I}k_i/\,\clU$ 
be the corresponding ultraproduct. Then for every
morphism $f:X\to Y$ of $\,k$-varieties, the 
following are equivalent:
\vskip2pt
\itm{30}{
\item[{\rm i)}] There is $I_0\in\clP_I$ such that 
the map $f^{k_i}:X(k_i)\to Y(k_i)$ is surjective 
for all $i\in I_0$.
\vskip2pt
\item[{\rm ii)}] The map $f^{\Ust k}:X(\Ust k)\to Y(\Ust k)$
is surjective for all ultrafilters $\,\clU$.
} 
In particular, if $I$ is infinite, then 
$f^{k_i}:X(k_i)\to Y(k_i)$ is surjective 
for almost all $i\in I\,$ if and only if 
$\,f^{\Ust k}:X(\Ust k)\to Y(\Ust k)$ is surjective 
for all non-principal ultrafilters $\,\clU$ in $I$.
}
\end{fact}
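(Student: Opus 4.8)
The plan is to prove the two implications (i)$\Rightarrow$(ii) and (ii)$\Rightarrow$(i) by exploiting the fact that having a $k_i$-rational point is a first-order property in a suitable language, so that \L{}o\'s's theorem transfers it between the factors $k_i$ and the ultraproduct $\Ust k$. First I would reduce to the affine case: since $f\colon X\to Y$ is a morphism of $k$-varieties, cover $Y$ by finitely many affine opens $Y=\bigcup_j V_j$ and $f^{-1}(V_j)$ by finitely many affine opens $U_{j\ell}$; surjectivity of $f^{k_i}$ (resp.\ $f^{\Ust k}$) is equivalent to the statement that every point of $V_j(k_i)$ (resp.\ $V_j(\Ust k)$) lifts to some $U_{j\ell}(k_i)$ (resp.\ $U_{j\ell}(\Ust k)$), and this finite book-keeping commutes with ultraproducts. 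So it suffices to treat $X\subset\mathbb A^n$, $Y\subset\mathbb A^m$ closed subvarieties cut out by polynomials over $k$, with $f$ given by polynomials over $k$.

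Next I would encode the surjectivity of $f^{k_i}$ as the truth, in the field $k_i$ (viewed as an $L_k$-structure in the language of rings with constants for a finite generating set of $k$ over the prime field), of a single first-order sentence $\varphi_f$: namely, ``for all $\bar y$ with the defining equations of $Y$, there exist $\bar x$ with the defining equations of $X$ and $f(\bar x)=\bar y$.'' This is $\forall\bar y\,(\psi_Y(\bar y)\to\exists\bar x\,(\psi_X(\bar x)\wedge \bar y=f(\bar x)))$, a genuine first-order sentence since $X,Y,f$ are defined by finitely many polynomials with coefficients in $k$. Then $f^{k_i}$ is surjective iff $k_i\models\varphi_f$, and $f^{\Ust k}$ is surjective iff $\Ust k\models\varphi_f$ (using the same coordinate description of $X(\Ust k)$, $Y(\Ust k)$ as tuples over $\Ust k$ satisfying the equations, which is valid for affine varieties). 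By \L{}o\'s's theorem, $\Ust k\models\varphi_f$ iff $\{i:k_i\models\varphi_f\}\in\clU$.

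From here both directions are immediate. If (i) holds, there is $I_0\in\clP_I$ with $k_i\models\varphi_f$ for all $i\in I_0$; then for any ultrafilter $\clU\supset\clP_I$ we have $I_0\in\clU$, so $\{i:k_i\models\varphi_f\}\supseteq I_0\in\clU$, whence $\Ust k\models\varphi_f$, i.e.\ $f^{\Ust k}$ is surjective. Conversely, if (i) fails, then $I_1:=\{i\in I:k_i\not\models\varphi_f\}$ meets every $I_0\in\clP_I$; hence $\clP_I\cup\{I_1\}$ has the finite intersection property and extends to an ultrafilter $\clU$ with $\clP_I\subset\clU$ and $I_1\in\clU$, so $\{i:k_i\models\varphi_f\}=I\setminus I_1\notin\clU$ and $\Ust k\not\models\varphi_f$, i.e.\ $f^{\Ust k}$ is not surjective. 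The final ``in particular'' clause is the special case where $\clP_I$ is the Fr\'echet (cofinite) prefilter on an infinite $I$: then ``$I_0\in\clP_I$'' means ``almost all $i$,'' and the ultrafilters containing $\clP_I$ are exactly the non-principal ones.

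The only genuinely delicate point is the careful treatment of the base field $k$: one must fix once and for all a finite tuple of constants (a transcendence basis plus, if needed, witnesses for the finitely many polynomial relations used to define $X,Y,f$) so that ``being a $k$-variety'' becomes ``being defined by a formula with parameters in this fixed tuple,'' and so that the $k$-structure on each $k_i$ and on $\Ust k$ is uniformly part of the language — otherwise \L{}o\'s's theorem does not directly apply. With that set-up in place, everything else is the standard ultraproduct/compactness argument sketched above; there is no real obstacle beyond this bookkeeping. (Note that projectivity, smoothness, or irreducibility of $X,Y$ play no role here; the statement is purely about existence of rational points over a family of fields and their ultraproducts.)
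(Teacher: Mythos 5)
Your proposal is correct and is essentially the paper's own argument: the paper likewise reduces to finite affine covers and coordinates, then transfers points between the factors $k_i$ and $\Ust k$ via $\clU$-local representatives, which is exactly the instance of \L{}o\'s's theorem you invoke for the sentence $\varphi_f$ (with constants for the finitely many coefficients defining $X$, $Y$, $f$), and the converse is the same ultrafilter-extension argument run in contrapositive form. The only cosmetic difference is that you package the transfer as a single first-order sentence and cite \L{}o\'s, whereas the paper unwinds that instance of the ultraproduct construction by hand.
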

\begin{proof} 
To i) $\Rightarrow$ ii)\ha1: To simplify
notation, we can suppose that $I=I_0$, 
or equivalently, $f^{k_i}:X(k_i)\to Y(k_i)$ 
is surjective for every $i\in I$.
Let $\clU$ be an ultrafilter on $I$ with 
$\clP_I\subset\clU$, and $\Ust y\in Y(\Ust k)$ 
be defined by $\kappa(y)\hra\Ust k$ for 
some $y\in Y$. Let $V\subset Y$ be an 
affine open neighborhood of $y$, say 
$k[V]=k[{\bm u}]=:S$ with ${\bm u}:=(u_1,\dots, u_n)$
a system of generators of the $k$-algebra $S$. 
Then by mere definitions, there is a system 
${\bm u}_\mclU$ of $n$ elements of $\Ust k$ such
that $\Ust y$ is defined by the morphism 
of $k$-algebras
\[
\Ust\psi: S \to S/y \hra \Ust k,\quad
{\bm u}\mapsto{\bm u}_{\mclU}.
\]
Hence, $\clU$-locally, there exist systems ${\bm u}_i$ of 
$n$ elements of $k_i$ and morphisms of $k$-algebras
\[
\psi_i: S \to  S/y \to k_i, \quad {\bm u}\mapsto{\bm u}_i,
\] 
defining $\Ust\psi$, i.e., ${\bm u}_\mclU=({\bm u}_i)_i/\clU$,
and let $y_i\in Y(k_i)$ be the $k_i$-rational point 
defined by $\psi_i$. 
\vskip2pt
Finally, let $(U_\alpha)_{\alpha}$, 
$U_\alpha=\Spec R_\alpha$, be a finite open 
affine covering of $f^{-1}(V)\subset X$. Then
$X(k_i)=\cup_\alpha\, U_\alpha(k_i)$ for all $k_i$,
and $y_i\in\cup_\alpha\, f\big(U_\alpha(k_i)\big)$ 
for every $i\in I$. Since $(U_\alpha)_\alpha$ is 
finite, there exists some $U\!:=U_{\alpha_0}$
such that $\clU$-locally one has: 
$y_i\in f\big(U(k_i)\big)$. Equivalently,
$\clU$-locally, there exists $x_i\in U(k_i)$
such that $f^{k_i}(x_i)=y_i$. Let $R:=k[U]$ be
the $k$-algebra of finite type with $U=\Spec R$. 
Then $f|_U:U \to V$ is defined by a unique 
morphism $f^{\#}_{UV}: S \to R$ of $k$-algebras, 
and there is a unique $k$-morphism
\[
\phi_i: R \to R/x_i\hra k_i
\]
defining $x_i\in U(k_i)$. Further, the fact 
that $f^{k_i}(x_i)=y_i$ is equivalent to 
$f^{\#}_{UV}\circ \psi_i=\phi_i$. Hence if
$\Ust\phi:R\to\Ust k$ is the $k$-morphism
having $\clU$-local representatives $\phi_i:R\to k_i$,
then one has
\[
\Ust\psi\circ f^{\#}_{UV}=\Ust\phi\,.
\] 
Hence if $\Ust x\in X(\Ust k)$ is the 
$\Ust k$-rational point of $X$ defined by 
$\Ust\phi$, then $f^{\Ust k}(\Ust x)=\Ust y$. 
\vskip2pt
To ii) $\Rightarrow$ i)\ha1: By contradiction, 
suppose that for every $J\in\clP_I$ there exists
$j\in J$ such that $f^{k_j}:X(k_j\to Y(k_j)$ 
is not surjective. Then setting $I':=\{i\in I\md 
f^{k_i} \hbox{ is not surjective}\}$, one has:
$\clP'_I:=\{J\cap I'\md J\in\clP_I\}$ is a 
prefilter on $I'\!$, and since $\clP_I\prec\clP'_I$,
every ultrafilter $\clU'$ on $I'$ containing 
$\clP'_I$ is the restriction $\clU'=\clU|_{I'}$
of an ultrafilter $\clU$ on $I$ containing $\clP_I$.
Hence {\it mutatis mutandis,\/} w.l.o.g., we
can suppose that there is an ultrafilter $\clU$ 
continuing $\clP_I$ and a set $J\in\clU$ such that 
$f^{k_i}$ is not surjective for all $i\in J$. Let 
$(V_\beta)_\beta$ be a finite open affine covering 
of $Y$. Then reasoning as above, there exists some
$V:=V_{\beta_0}$ such that $\clU$-locally one 
has: $V(k_i)\not\subset f^{k_i}\big(X(k_i)\big)$. 
Equivalently, $\clU$-locally, there exists $y_i\in V(k_i)$ 
such that $y_i\not\in f^{k_i}\big(X(k_i)\big)$.
That being said, let $\psi_i: S:=k[V] \to k_i$ be the 
morphism of $k$-algebras defining $y_i\in V(k_i)$, 
and $\Ust\psi:S\to\Ust k$ be the $k$-morphism 
defined by $(\psi_i)_i$. Then $\Ust\psi:S\to\Ust k$ 
defines a $\Ust k$-rational point
$\Ust y\in V(\Ust k)\subset Y(\Ust k)$. Hence
by the hypothesis, there is $\Ust x\in X(\Ust k)$
such that $f^{\Ust k}(\Ust x)=\Ust y$. Let
$y\in V$ and $x\in X$ be such that $\Ust y$
and $\Ust x$ are defined by $k$-embeddings
$\kappa(y)\hra\Ust k$, respectively $\kappa(x)\hra\Ust k$.
Then choosing $U\subset X$ affine open with 
$x\in U$ and $f(U)\subset V$, and setting $R:=k[U]$, 
the following hold: 
\vskip2pt
\itm{30}{
\item[a)] $f|_U:U\to V$ is defined by a unique 
morphism of $k$-algebras $f^{\#}_{UV}:S\to R$.
\vskip2pt
\item[b)] $\Ust x$ is defined by a unique morphism 
of $k$-algebras $\Ust\phi:R\to R/x\to\Ust k$.
\vskip2pt
\item[c)] One has that $\Ust\phi=
               \Ust\psi\circ f^{\#}_{UV}$.
}
Therefore, letting $\phi_i:R\to k_i$ be $\,\clU$-local
representatives for $\Ust\phi$, by the general 
nonsense of ultraproducts, $\clU$-locally one has: 
\[
\phi_i=\psi_i\circ f^{\#}_{UV}\,.
\]
Hence if $x_i$ is the $k_i$-rational point of $X$
defined by $\phi_i: R\to k_i$, it follows that
$f^{k_i}(x_i)=y_i$. Therefore, $\clU$-locally, 
one must have that $y_i\in f\big(X(k_i)\big)$, 
contradiction!
\vskip2pt
Finally, for the last assertion of Fact~\ref{fkt1},
we notice: First, the set $\clP_I$ of all the cofinite 
subsets of $I$ is a prefilter on $I$, and $I'\in\clP_I$ 
iff $I\backslash I'$ is finite. Second, an ultrafilter
$\,\clU$ on $I$ is non-principal iff $\,\clP_I\subset\clU$.
Conclude by applying the equivalence
i) $\Leftrightarrow$ ii) to this situation.
\end{proof}
\begin{definition}
\label{dfn3}
A field $k$-embedding $k'\to l'$ is called 
\defi{quasi-elementary}, if there are field 
$k$-embeddings $k'\to l'\to k''\to l''$ with 
$k''\md k'$ and $l''\md l'$ elementary 
$k$-embeddings.
\end{definition}
\begin{fact}
\label{fkt2}
{\it Let $f:X\to Y$ be a morphism of varieties 
over an arbitrary base field $k$, and let $\clC_f$ 
be the class of all the field extensions $k'|k$  
with $f^{k'}:X(k')\to Y(k')$ surjective, One has: 
\vskip2pt
\itm{25}{
\item[{\rm1)}] $\clC_f$ is an elementary class, 
i.e., $\clC_f$ is closed w.r.t.\ ultraproducts and 
sub-ultrapowers. 
\vskip2pt
\item[{\rm2)}] Let $k'\hra l'$ be a
quasi-elementary $k$-field extension. Then
$k'\in\clC_f$ iff $\,l'\in\clC_f$.
}
}
\end{fact}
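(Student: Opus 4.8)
The plan is to trade surjectivity of $f$ on local points for the validity of a \emph{single} first-order sentence over $k$ of the lowest possible syntactic complexity, and then to read off both assertions by soft model theory. \textbf{Step 1 (a defining sentence).} Imitating the covering argument in the proof of Fact~\ref{fkt1}, fix a finite affine open cover $Y=\bigcup_\beta V_\beta$, $V_\beta=\Spec S_\beta$, and for each $\beta$ a finite affine open cover $f^{-1}(V_\beta)=\bigcup_\gamma U_{\beta\gamma}$, $U_{\beta\gamma}=\Spec R_{\beta\gamma}$, with $f$ restricting to $k$-algebra maps $f^{\#}_{\beta\gamma}:S_\beta\to R_{\beta\gamma}$; choose presentations $S_\beta=k[\bm u_\beta]/I_\beta$, $R_{\beta\gamma}=k[\bm w_{\beta\gamma}]/J_{\beta\gamma}$ with $I_\beta,J_{\beta\gamma}$ finitely generated. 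For a field extension $k'|k$, a $k'$-point of $V_\beta$ is then a tuple over $k'$ annihilating fixed generators of $I_\beta$, a $k'$-point of $U_{\beta\gamma}$ is a tuple annihilating $J_{\beta\gamma}$, and the $V_\beta$-point it maps to is obtained by substituting it into fixed polynomials over $k$ representing $f^{\#}_{\beta\gamma}$. Since $X=\bigcup_{\beta,\gamma}U_{\beta\gamma}$ and a $k'$-point of $Y$ landing in $V_\beta$ that is hit by $f^{k'}$ is necessarily hit from $\bigcup_\gamma U_{\beta\gamma}$, surjectivity of $f^{k'}$ is equivalent to the validity in $k'$ of
\[
\sigma_f:\quad\bigwedge_\beta\ \forall\bm x\ \Big(\rho_\beta(\bm x)\ \longrightarrow\ \bigvee_\gamma\ \exists\bm y\ \big(\tau_{\beta\gamma}(\bm y)\wedge\bm x=P_{\beta\gamma}(\bm y)\big)\Big),
\]
where $\rho_\beta$, $\tau_{\beta\gamma}$ (conjunctions of polynomial equations) and the tuples $P_{\beta\gamma}$ of polynomials have coefficients in $k$. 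Thus $\clC_f=\{\,k'|k:k'\models\sigma_f\,\}$. The key bookkeeping point is that $\sigma_f$ is a $\forall\exists$ (i.e.\ $\Pi_2$) sentence, so $\neg\sigma_f$ is equivalent to the $\exists\forall$ sentence $\bigvee_\beta\exists\bm x\,(\rho_\beta(\bm x)\wedge\psi_\beta(\bm x))$, with $\psi_\beta(\bm x):=\bigwedge_\gamma\forall\bm y\,(\tau_{\beta\gamma}(\bm y)\to P_{\beta\gamma}(\bm y)\neq\bm x)$ universal.

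\textbf{Step 2 (assertion 1).} Since $\clC_f$ is the class of field extensions of $k$ satisfying the single sentence $\sigma_f$, closure under ultraproducts follows from \L o\'s's theorem (or from Fact~\ref{fkt1} applied to the trivial prefilter), and closure under sub-ultrapowers follows because the truth value of a first-order sentence is invariant under elementary equivalence while every sub-ultrapower of $k'$ is elementarily equivalent to $k'$. Hence $\clC_f$ is an elementary class.

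\textbf{Step 3 (assertion 2).} Given a quasi-elementary $k'\hra l'$, unwind Definition~\ref{dfn3} to $k$-embeddings $k'\hra l'\hra k''\hra l''$, regarded as inclusions, with $k'\preceq k''$ and $l'\preceq l''$. It suffices to show that along such a configuration both $\Pi_2$ sentences and $\exists\forall$ sentences pass upward from $k'$ to $l'$: then $\sigma_f$ and $\neg\sigma_f$ both pass upward, and being complementary they force $k'\in\clC_f\iff l'\in\clC_f$. For $\sigma_f\in\Pi_2$: from $k'\models\sigma_f$ get $k''\models\sigma_f$; given any tuple $\bm a$ from $l'\subseteq k''$, a witness ($\gamma$ and a tuple $\bm b$) from $k''$ for the inner existential block about $\bm a$ is still a witness in $l''\supseteq k''$ (the block is quantifier-free), so $l''\models\exists\bm y(\dots)$, whence by $l'\preceq l''$ (and $\bm a\in l'$) the block already holds in $l'$; as $\bm a$ was arbitrary, $l'\models\sigma_f$. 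For $\neg\sigma_f$: a witnessing tuple $\bm a_0$ from $k'$ with $k'\models\rho_\beta(\bm a_0)\wedge\psi_\beta(\bm a_0)$ persists in $k''$ since $k'\preceq k''$, and descends to the substructure $l'\ni\bm a_0$ since $\psi_\beta$ is universal; so $l'\models\neg\sigma_f$.

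The one genuinely substantive step is Step~1: checking that surjectivity of $f^{k'}$ is captured, \emph{uniformly in $k'$}, by the single sentence $\sigma_f$, and — crucially for Step~3 — that $\sigma_f$ can be put in $\Pi_2$ form. Finiteness of the affine covers (quasi-compactness of $X$ and $Y$) and of the defining ideals (Noetherianity) is precisely what makes this work; after that, \L o\'s's theorem, invariance under elementary equivalence, and the routine chain manipulation in Step~3 complete the proof.
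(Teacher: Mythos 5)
Your proof is correct and follows essentially the same route as the paper's: the paper deduces assertion 1) from the covering/ultraproduct argument of Fact~\ref{fkt1} and proves assertion 2) by chasing points through $k'\subseteq l'\subseteq k''\subseteq l''$ using existential closedness of $k'$ in $k''$ and of $l'$ in $l''$, which is exactly your preservation argument for the $\Pi_2$ sentence $\sigma_f$ and its $\exists\forall$ negation, made syntactically explicit. The only presentational difference is that you write out the defining sentence $\sigma_f$ once and for all, whereas the paper leaves the first-order definability implicit in the finite affine covers.
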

\begin{proof} Assertion 1) follows from 
Fact~\ref{fkt1} by mere definition. To 2): We begin 
by noticing that $X(\tl k)\subset X(\tl l)$ for all 
$k$-field extensions $\tl k\subset \tl l$. First, consider
the case $l'\in\clC_f$. Then one has $Y(k')\subset Y(l')=
f^{l'}\big(X(l')\big)\subset f^{k''}\big(X(k'')\big)$, 
hence $Y(k')\subset f^{k'}\big(X(k')\big)$, because 
$k'$ is existentially  closed in $k''\!$. Hence 
finally $Y(k')=f^{k'}\big(X(k')\big)$. Second, let 
$k'\in\clC_f$. Embeddings $k'\hra l'\hra k''\hra l''$ 
as in Definition~\ref{dfn2} imply both: $k''\in\clC_f$, 
by assertion~1) above; and $l'$ is existentially 
closed in $l''\!$. Hence reasoning as in the first 
case, one gets $l'\in\clC_f$. 
\end{proof}
\noindent
\subsection{Ultraproducts of localizations of 
arithmetically significant fields\/}$\ha0$
\vskip5pt
\noindent
We introduce notation and recall well known 
facts. We generalize the context in which the 
conclusion of Theorem~\ref{thm1} holds, finally 
allowing to announce Theorem~\ref{thm2} below.
\vskip7pt
For arbitrary fields $k$, consider sets $\Sgm k$ 
of (equivalence classes of) discrete valuations 
$v$ of $k$ such that for all finite non-empty 
subsets $A\subset k$ one has:
\[
\ \ U\!_A\!:=\{v\in\Sgm k\md A\subset\clO_v^\times\}\neq\vid,
\hbox{ hence } \,\clP_{\Sgm k}\!:=\{U\!_A\md A\!\subset\!k^\times
\,\hbox{finite}\} \ \hbox{\it is a prefilter on $\Sgm k$.}
\leqno{\ (\clP)}
\]
\begin{example} 
\label{xmpl1}
$\ha0$ 
Let $X$ be an integral $S$-variety, where $S$ 
is either $\lvZ$ or a field $k_0$, $X_0\subset X$ 
be the set of regular closed points in $X$, and 
$k\!:=\kappa(X)$ be the function~field of $X$. 
One has:
\vskip4pt
\itm{25}{
\item[1)] $\Sgm k$ satisfies $(\clP)$ iff  $X_{\Sgm k}\!:
=\{x_v\in X\md x_v \hbox{ is the center of } 
v\in\Sgm k\}$ is Zariski dense in $X$.
\vskip3pt
\item[2)] If $X_0$ is Zariski dense, there are  
$\Sgm k$ with $X_{\Sgm k}=X_0$, and 
satisfying the following:
\vskip3pt
\itm{20}{
\item[a)] If $X$ is an integral $\lvZ$-variety,
then $kv=\kappa(x_v)$ for all $v\in \Sgm k$.
\vskip3pt
\item[b)] If $X$ is an integral variety over a
field $k_0$, then $kv=\kappa(x_v)$ for all 
$v\in \Sgm k$.
} }
\end{example}
\begin{notations/remarks}
\label{notarem2}
Given $k$ and $\Sigma_k$ as above, let $k_v$ 
be the completion of $k$ at $v\in\Sgm k$, and
$\clU$ \underbar{always} denote ultrafilters on 
$\Sgm k$ with $\clP_{\Sgm k}\subset\clU$. 
Given $\,\clU$, consider the ultraproducts: 
\[
{\textstyle\Ust k:=\prod_{v}\! k_v\ha1/\,\clU,
\ \ \Ust\clO:=\prod_v\!\clO_v\ha1/\,\clU, \ \
\Ust\eum:=\prod_v\!\eum_v\ha1/\,\clU,
\ \ \Uj\kappa:=\prod_v\! k_v v\ha1/\,\clU}.
\]
%
%
\vskip2pt
\noindent
Then $\Ust\clO$ is the valuation ring of $\Ust k$,
say $\Ust\clO=\clO_{\!\Ust v}$ of the valuation
$\Ust v$, with valuation ideal 
$\eum_{\Ust v}=\Ust\eum$, residue field
$\Ust k \Ust v=\Uj\kappa$, and value group
$\Ust v\Ust k=\prod_v\! vk /\,\clU=\lvZ^{\Sgm k}\!/\,\clU$.
\vskip4pt
\itm{25}{
\item[1)] One has the (canonical) diagonal 
field embedding $\Ust\imath:k\hra\Ust k$, 
and $\Ust v$ is trivial on~$k$ (by the fact 
that $\clP_{\Sgm k}\subset\clU\,$).
\vskip3pt
\item[2)] If $\omega_v\subset\clO_v$ is a set 
of representatives of $kv$, then 
$\Ust\omega:=\prod_v\!\omega_v\subset\Ust\clO$
is a system of representatives of $\Ust k\Ust v$
inside $\Ust\clO$. In particular, if $\omega_v$
are multiplicative, so is $\Ust\omega$.
\vskip3pt
\item[3)] The value group $\Ust v\Ust k$ is a 
$\lvZ$-group. Further, if $\pi_v\in k_v$ is a 
uniformizing parameter for $v\in\Sgm k$, then 
$\Uj\pi=(\pi_v)_v/\,\clU$ is an element 
of minimal value in $\Ust v\Ust k$. 
\vskip3pt
\item[4)] The field $\Ust k$ is Henselian with
respect to $\Ust v$, and one has:
\vskip3pt
\itm{20}{
\item[a)] Suppose that $\chr(k)=0$. Then 
$\Ust v$ is trivial on $\lvQ\subset\Uj\kappa$,
and if $\clT\subset\Ust\clO$ is any lifting of a 
transcendence basis of $\Uj\kappa\md\lvQ\,$,
by Hensel Lemma one has: The relative 
algebraic closure $\Uj\kappa'\subset\Ust\clO$ 
of $\lvQ(\clT)$ in $\Ust k$ is a field of 
representatives for $\Uj\kappa$.
\vskip4pt
\item[b)] The fields of representatives
 $\Uj\kappa'\!\subset\Ust\clO$ for $\Uj\kappa$ 
are relatively algebraically closed~in~$\Ust k$. 
\vskip3pt
\item[c)] For $\Uj\kappa'\subset\Us k$ as above, let 
$\Us k:=\Uj\kappa'(\Uj\pi)^h$ be the Henselization 
of $\Uj\kappa'(\Uj\pi)$ w.r.t.\ the $\Uj\pi$-adic 
valuation, and set $\Us v:=\Ust v|_{\Us k}$. Then 
one has $k$-embeddings: 
\[
\ha{30}\Us k:=\Uj\kappa'(\Uj\pi)^h \hra\Ust k \ \
\hbox{\it as valued fields, and } \ 
\kappa(\Us v)\,=\Uj\kappa=\,\kappa(\Ust v),
\]
and further, $\Us v\Us k=\lvZ\hra 
\lvZ^{\Sgm k}/\,\clU=\Ust v \Ust k$ are 
$\lvZ$-groups having $\Uj\pi$ as the element 
of minimal positive value. Hence the 
Ax--Kochen--Ershov Principle (AKE) 
implies:
} 
}
\end{notations/remarks}
\begin{fact} 
\label{fkt3}
{\it If $\,\chr(k)=0$, then $\Us k\hra \Ust k$ is an 
elementary $k$-embedding of (valued) fields.\/}
\end{fact}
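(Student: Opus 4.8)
The plan is to deduce this from the classical Ax--Kochen--Ershov Principle for Henselian valued fields of residue characteristic zero, applied to the inclusion $\Us k \hra \Ust k$ together with all its finite sub-extensions. First I would recall that by construction $\Us k = \Uj\kappa'(\Uj\pi)^h$ is Henselian for $\Us v$, and $\Ust k$ is Henselian for $\Ust v$ by Notations/Remarks~\ref{notarem2}(4); moreover $\chr(k)=0$ forces $\chr(\Uj\kappa)=0$ (since $\Ust v$ is trivial on $\lvQ$), so both valued fields have residue characteristic zero and hence are equicharacteristic-zero Henselian valued fields. In this setting the AKE transfer principle states: for Henselian valued fields $(F_1,v_1)\subseteq(F_2,v_2)$ of equicharacteristic zero, the extension is elementary (in the language of valued fields, or equivalently of rings) provided the residue field extension $\kappa(v_1)\hra\kappa(v_2)$ is elementary and the value group extension $v_1F_1\hra v_2F_2$ is elementary in the language of ordered abelian groups.

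So the two remaining points to verify are exactly the two displayed assertions preceding the statement. On residue fields, Notations/Remarks~\ref{notarem2}(4c) gives $\kappa(\Us v)=\Uj\kappa=\kappa(\Ust v)$: the residue field extension is an \emph{equality}, hence trivially elementary. On value groups, we have $\Us v\Us k=\lvZ \hra \lvZ^{\Sgm k}/\,\clU=\Ust v\Ust k$, and both are $\lvZ$-groups (i.e.\ models of Presburger arithmetic) with $\Uj\pi$ as common least positive element; since the theory of $\lvZ$-groups is complete and the embedding respects the distinguished constant $1=v(\Uj\pi)$, this embedding is elementary. To be fully careful about the relative-algebraic-closure subtlety, I would invoke Notations/Remarks~\ref{notarem2}(4b): the field of representatives $\Uj\kappa'\subset\Ust\clO$ is relatively algebraically closed in $\Ust k$, which is what guarantees that $\Us k=\Uj\kappa'(\Uj\pi)^h$ embeds into $\Ust k$ as a valued subfield with the residue and value-group data as claimed, rather than merely as an abstract field.

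Feeding these two facts into AKE yields that $\Us k\prec\Ust k$ as valued fields. Finally, since the valuation $\Ust v$ is trivial on $k$ and $k\subset\Us k$ via the diagonal embedding (Notations/Remarks~\ref{notarem2}(1)), the elementary embedding is automatically a $k$-embedding, giving the stated conclusion. The main obstacle is not any single hard step but rather pinning down the precise form of AKE being used and checking that the value-group and residue-field hypotheses are met in the form the cited version of AKE requires; once the setup in Notations/Remarks~\ref{notarem2} is in place, the residue fields coincide on the nose and the value groups are $\lvZ$-groups with a common generator, so both hypotheses hold essentially by inspection, and there is no genuine difficulty beyond bookkeeping. (One should also note that AKE here is applied not just to $\Us k\subseteq\Ust k$ but uniformly, so that the embedding is elementary and not merely an elementary-equivalence statement; this is the standard ``relative'' formulation of AKE, e.g.\ as in the references~\cite{Ax, A-K1, A-K2, P-R} cited in the introduction.)
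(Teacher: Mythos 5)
Your proposal is correct and follows exactly the route the paper takes: the paper presents Fact~\ref{fkt3} as an immediate consequence of the relative Ax--Kochen--Ershov principle, given the setup of Notations/Remarks~\ref{notarem2},~4) (equal residue fields $\kappa(\Us v)=\Uj\kappa=\kappa(\Ust v)$, and value groups that are $\lvZ$-groups sharing $\Uj\pi$ as minimal positive element). Your write-up simply makes explicit the residue-characteristic-zero and elementarity checks that the paper leaves implicit.
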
 
\vskip2pt
\noindent
Unfortunately, if $\chr(k)=p>0$, it is not known 
whether the conclusion of Fact~\ref{fkt3}~holds. 
Therefore, for $\clU$ on $\Sgm k$ as in 
Remarks/Notations \ref{notarem2} above, and 
the corresponding $k$-embeddings 
$\Us k=\kappa_\UU(\Uj\pi)^h\hra\Ust k$, 
consider the following hypothesis\ha2---\ha1which 
is weaker than the Ax--Kochen--Ershov Principle
(AKE), holding if $\chr(k)$ is zero:
\vskip8pt
\noindent
\centerline{\defi{(qAKE)$_{\Sgm k}\ha{20}$} 
\ {\it $\Uj k\to \Ust k$ are quasi-elementary 
$k$-embeddings for all $\,\clU$.\ha{80}}
} 
\vskip8pt
\noindent
In the above notation, one has, see e.g.\ 
\cite{Ch}, and~\cite{F-J},~Ch.\ha211\ha2:
%
%
\begin{fact}[{\bf Residue fields}]
\label{fkt4}
{\it Let $k$ be as in {\rm Example~\ref{xmpl1},~2)}. 
\vskip2pt
{\rm1)} \ In case~{\rm a)}, $\Uj\kappa$ 
is a perfect $\aleph_1$-saturated PAC quasi-finite 
field.\footnote{\ha2\rm That is, the absolute 
Galois group $G_{\Uj\kappa}$ of $\Uj\kappa$ 
is $\,G_{\Uj\kappa}\cong\widehat\lvZ\cong G_{\lvF}$, 
where $\lvF$ is any finite field.}
\vskip2pt
{\rm2)} \ In case~{\rm b)}, let $k_0$ be 
(perfect) PAC. Then $\Uj\kappa$ is (perfect) PAC 
and $\aleph_1$-saturated.
}
\end{fact}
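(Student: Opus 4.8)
The plan is to reduce everything to two ultraproduct principles: \nmnm{Ax}'s theorem on ultraproducts of finite fields and its analogue for PAC fields. By Example~\ref{xmpl1},~2), in both cases $\Uj\kappa=\prod_v k_vv/\,\clU$ is nothing but $\prod_v\kappa(x_v)/\,\clU$, where $\kappa(x_v)$ is a finite field in case~a) and a finite (hence algebraic) extension of $k_0$ in case~b). First I would record two facts about $\clU$. Since $\clP_{\Sgm k}\subset\clU$ and $\bigcap_{a\in k^\times}U_{\{a\}}=\{v\in\Sgm k\mid v\text{ is trivial on }k\}=\vid$ (the $v$ being non-trivial discrete valuations of $k$), the ultrafilter $\clU$ contains no finite set, i.e.\ $\clU$ is non-principal; and when $k$ is countable --- which is the case in the arithmetically significant situations, e.g.\ when $S=\lvZ$, or when $k_0$ is countable --- the countable family $\{U_{\{a\}}\mid a\in k^\times\}\subset\clU$ already has empty intersection, so $\clU$ is countably incomplete. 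By the standard saturation theorem for countably incomplete ultraproducts, $\Uj\kappa$ is then $\aleph_1$-saturated; this gives the last clause in both~1) and~2).

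For case~a) I would first check that $\Uj\kappa$ is infinite. Since $X$ is of finite type over $\lvZ$, for each prime power $q=p^f$ the closed points of $X$ with residue cardinality $q$ have centres among the finitely many closed points of the fibre of $X$ over $p$ of residue degree $\le f$; hence $\{v\in\Sgm k\mid |\kappa(x_v)|=q\}$ is finite, so $\{v\mid|\kappa(x_v)|\le n\}$ is finite for every $n$, hence not in the non-principal $\clU$. Thus $\Uj\kappa$ is an infinite ultraproduct of finite fields, so it is a model of the theory of all finite fields, and by \nmnm{Ax}'s theorem it is pseudo-finite: perfect, PAC, with $G_{\Uj\kappa}\cong\widehat\lvZ$. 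This is exactly the assertion of~1).

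For case~b), each $\kappa(x_v)$ is algebraic over the PAC field $k_0$, hence itself PAC by \nmnm{Ax}'s theorem that algebraic extensions of PAC fields are PAC (see \cite{F-J}), and perfect if $k_0$ is. I would then use that the class of PAC fields is elementary --- over the theory of fields it is axiomatised by the scheme of sentences ``every absolutely irreducible affine plane curve of degree $d$ has a rational point'' ($d\ge1$), where one invokes that absolute irreducibility of curves of bounded degree is a first-order condition on the coefficients (\nmnm{Noether}) and that plane-curve-PAC implies PAC. Hence the ultraproduct $\Uj\kappa$ of the PAC fields $\kappa(x_v)$ is PAC; and perfectness passes to $\Uj\kappa$ as well, since if the $\kappa(x_v)$ share a characteristic $p$ it is the first-order sentence ``Frobenius is onto'', while otherwise $\Uj\kappa$ has characteristic $0$. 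Together with the $\aleph_1$-saturation from the first step this yields~2).

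The genuinely substantial inputs are therefore \nmnm{Ax}'s theorem --- both the pseudo-finiteness of infinite ultraproducts of finite fields together with the perfect/PAC/$\widehat\lvZ$ trichotomy, and the stability of PAC under algebraic extensions --- and the elementarity of the PAC property; everything else is {\L}o\'s's theorem and elementary scheme theory. I expect the only point requiring care to be the combinatorial bookkeeping that makes $\clU$ non-principal and (for $\aleph_1$-saturation) countably incomplete, and that forces $\Uj\kappa$ to be infinite in case~a); once these are settled the statement follows at once.
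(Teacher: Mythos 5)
The paper does not actually prove Fact~\ref{fkt4}: it is stated as a known result with pointers to \cite{Ch} and \cite{F-J}, Ch.\ 11, and your argument is precisely the standard one those sources supply ({\L}o\'s plus \nmnm{Ax}'s theorem on ultraproducts of finite fields for 1); \nmnm{Ax}--\nmnm{Roquette} plus the first-order axiomatizability of PAC for 2); countable incompleteness of $\clU$ for the $\aleph_1$-saturation). So the route is the right one; the only things to scrutinize are your two pieces of combinatorial bookkeeping, and each has a small slip.

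First, your claim that $\{v\in\Sgm k\md |\kappa(x_v)|=q\}$ is finite tacitly assumes that $v\mapsto x_v$ is finite-to-one, which Example~\ref{xmpl1},~2) does not guarantee: when $\dim X\geqslant 2$, a single regular closed point $x$ carries infinitely many discrete valuations $v$ with $kv=\kappa(x)$, and $\Sgm k$ is only required to satisfy $X_{\Sgm k}=X_0$ and $kv=\kappa(x_v)$. What you actually need is only that this set is not in $\clU$, and that follows directly from $\clP_{\Sgm k}\subset\clU$: for a fixed closed point $x$ and any $a\in\eum_x\setminus\{0\}\subset k^\times$ one has $\{v\md x_v=x\}\cap U_{\{a\}}=\vid$, hence $\{v\md x_v=x\}\notin\clU$, and $\{v\md |\kappa(x_v)|\leqslant n\}$ is a finite union of such sets (there are only finitely many closed points of the finite-type $\lvZ$-scheme $X$ with residue cardinality $\leqslant n$). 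Second, your derivation of countable incompleteness of $\clU$ uses countability of $k$; this settles case a), where $k$ is finitely generated over its prime field, but not case b) with $k_0$ uncountable, where a non-principal ultrafilter need not be countably incomplete and the $\aleph_1$-saturation clause requires either an extra hypothesis on $\clU$ or a separate argument producing a countable subfamily of $\clP_{\Sgm k}$ with empty intersection. Apart from these two repairable points, the proof is correct and matches what the cited references provide.
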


\noindent
Note that if $\chr(k)=p>0$, then in case~1) above 
one has: If $\lvF_v:=\oli\lvF_p\cap k_v$, then 
$k_v=\lvF_v\lps{\pi_v}$ for any $\pi_v\in k$ 
with $v(\pi_v)=1$. Hence $\Uj\lvF:=
\prod_v\lvF_v/\,\clU\subset\Ust\clO$ is a perfect
field and a system of representatives for $\Uj\kappa$. 
Further, if $t\in k$ is part of a separable transcendence 
basis for $k\md \lvF_p$, then $U_t:=\{v\md v(t)=0\}$
lies in $\clU$, and if $a_v=tv\in k_vv=\lvF_v$ is the 
residue of $t$ at $v$, then $\pi_v:=t-a_v\in k_v$ 
satisfies $v(\pi_v)=1$, and one has a canonical embedding:
\[
k\hra \lvF_v\lps{\pi_v}=k_v.
\] 
Thus setting $a_\UU:=(a_v)_v/\,\clU\in\kappa_\UU$, 
one has $\pi_\UU=t- a_\UU\in k\,\lvF_\UU$. But 
despite of these special/particular facts, it is 
unknown whether the conclusion of Fact~\ref{fkt3}
holds in this case.  
\subsection{Generalized pseudo-split extensions}$\ha0$
\vskip5pt
\noindent
We begin by discussing the case of fields $k$ 
as in Example~\ref{xmpl1},~2),~a). Precisely, 
$k=\kappa(X)$ is the function field of an integral 
$\lvZ$-variety $X$, and further: $X_0\subset X$ 
is the set of closed regular points (which is 
Zariski open in the set of all closed points), and
$X_{\Sgm k}\subset X_0$ is Zariski dense.
\vskip2pt
We say that $\sigma\in G_k$ and the co-procyclic 
extension $\oli k^{\ha1\sigma}|\ha1 k$ of $k$ 
are \defi{$\Sgm k$-definable}, if for all 
finite Galois extensions $l\md k$, and all
$U_A\in\clP_{\Sgm k}$, one has:
\[
U_{A,\,l|k}(\sigma):=\{v\in U\hb3_A\md 
\hbox{$v$ unramified in $l\md k$ and
$\Frob(v)=\sigma|_l$}\}\neq\vid.
\]
Notice that in the case $X_{\Sgm k}\subset X_0$
and $kv=\kappa(x_v)$ for all $v\in\Sgm k$ one has:
\vskip2pt
\itm{15}{
\item[-] If $X_{\Sgm k}$ has Dirichlet density 
$\delta(X_{\Sgm k})=1$, e.g.\ if $X_{\Sgm k}
\subset X_0$ is Zariski open, it follows 
by the Chebotarev Density Theorem, see 
e.g.~\nmnm{Serre}~\cite{Se1}, that all 
$\sigma\in G_k$ are $\Sgm k$-definable.
\vskip2pt
\item[-] If $\Sgm k$ is \defi{Frobenian} in the
sense of \nmnm{Serre}~\cite{Se2}, 3.3, say
defined by a finite Galois extension $l\md k$
and a set of conjugacy classes $\Phi\subset 
{\rm Gal}(l\md k)$, then $\sigma\in G_k$ is 
$\Sgm k$-definable iff $\sigma|_l\in\Phi$.
} 
\begin{fact}
\label{fkt4-5}
{\it In the above notation, $\sigma\in G_k$ is 
$\Sgm k$-definable iff $\,\,\oli k ^\sigma\hb3=
\Ust k\cap\oli k\,$ for some $\,\clU$.\/}
\end{fact}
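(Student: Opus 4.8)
The plan is to compute the relative algebraic closure $K_\clU$ of $k$ in $\Ust k$ in terms of an ``arithmetic Frobenius'' attached to $\clU$, and then to match that Frobenius with the usual Frobenius elements $\Frob(v)$, $v\in\Sgm k$, via reduction modulo $v$. First I would observe that a valuation trivial on a field is trivial on every algebraic extension of it; since $\Ust v$ is trivial on $k$ (Remarks/Notations~\ref{notarem2}), it is trivial on $K_\clU$, so $K_\clU\subset\Ust\clO$ and the residue map restricts to a $k$-embedding $K_\clU\hra\Uj\kappa$. Because $\Ust k$ is Henselian for $\Ust v$ (ibid.), every element of $\Uj\kappa$ algebraic over $k$ lifts into $\Ust\clO$, so this embedding identifies $K_\clU$ with the relative algebraic closure of $k$ in $\Uj\kappa$. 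By Fact~\ref{fkt4}(1) the field $\Uj\kappa$ is quasi-finite, i.e.\ $G_{\Uj\kappa}\cong\widehat\lvZ$ with canonical topological generator the Frobenius $\phi_\clU=(x\mapsto x^{q_v})_v/\,\clU$, where $q_v:=|\kappa(x_v)|$. Fixing a $k$-embedding $\oli k\hra\oli{\Uj\kappa}$ and setting $\rho_\clU:=\phi_\clU|_{\oli k}\in G_k$ (well defined up to conjugacy), its fixed field $\oli k^{\,\rho_\clU}$ is exactly that relative algebraic closure; hence $\Ust k\cap\oli k=\oli k^{\,\rho_\clU}$, and the assertion reduces to the statement that $\sigma$ is $\Sgm k$-definable iff $\oli{\langle\rho_\clU\rangle}$ is $G_k$-conjugate to $\oli{\langle\sigma\rangle}$ for some $\clU$.

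Next I would relate $\rho_\clU$ to Frobenius elements. Fix a finite Galois extension $l\md k$; spreading it out, the normalization $X'$ of $X$ in $l$ is étale over a dense open $U\subset X$, which I may take to be $U=\{a\neq0\}$ for some $a\in k^\times$. If $v(a)=0$ then the center $x_v$ of $v$ lies in $U$, so $U_{\{a\}}\subset\{v\in\Sgm k\md x_v\in U\}$, and hence this set belongs to $\clP_{\Sgm k}\subset\clU$; this is the place where the hypothesis $(\clP)$, the Zariski density of $X_{\Sgm k}$, and the equality $kv=\kappa(x_v)$ (Example~\ref{xmpl1}(2a)) enter. For every such $v$ the valuation is unramified in $l\md k$, the residue extension is a finite-field extension, and a Frobenius element $\Frob(v)\in\mathrm{Gal}(l\md k)$ acts on it by $x\mapsto x^{q_v}$. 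As $\mathrm{Gal}(l\md k)$ is finite, the map $v\mapsto\Frob(v)$ is constant on a member of $\clU$, and comparing with $\phi_\clU$ identifies that constant value with the conjugacy class of $\rho_\clU|_l$. Thus, for each finite Galois $l\md k$, one has $\Frob(v)=\rho_\clU|_l$ for $\clU$-almost all $v$.

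For the direction ``$\Sgm k$-definable $\Rightarrow$ existence of $\clU$'', put $W_l:=\{v\in\Sgm k\md v\text{ unramified in }l,\ \Frob(v)=\sigma|_l\}$ for each finite Galois $l\md k$; $\Sgm k$-definability says $U_A\cap W_l\neq\vid$ for all $A$ and $l$. Since $U_A\cap U_{A'}=U_{A\cup A'}$ and $W_{ll'}\subset W_l\cap W_{l'}$ (restriction carries Frobenius classes to Frobenius classes), the family $\clP_{\Sgm k}\cup\{W_l\}_l$ has the finite intersection property; choose an ultrafilter $\clU$ containing it. By the previous step $\rho_\clU|_l$ and $\sigma|_l$ are conjugate in $\mathrm{Gal}(l\md k)$ for every $l$, and a compactness argument in $G_k$ (the conjugator sets at the finite levels being nonempty, closed and nested) produces $g\in G_k$ with $g\,\oli{\langle\sigma\rangle}\,g^{-1}=\oli{\langle\rho_\clU\rangle}$, that is, $\Ust k\cap\oli k=\oli k^{\,\rho_\clU}=\oli k^{\,\sigma}$. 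Conversely, given $\clU$ with $\Ust k\cap\oli k=\oli k^{\,\sigma}$, the subgroups $\langle\rho_\clU|_l\rangle$ and $\langle\sigma|_l\rangle$ are conjugate in each $\mathrm{Gal}(l\md k)$; combined with the previous step, for every finite Galois $l\md k$ and every $U_A\in\clP_{\Sgm k}\subset\clU$ there is $v\in U_A$, unramified in $l$, whose Frobenius generates a $\mathrm{Gal}(l\md k)$-conjugate of $\langle\sigma|_l\rangle$ — which, since $\oli k^{\,\sigma}\md k$ is co-procyclic, is precisely the assertion that $\sigma$ is $\Sgm k$-definable.

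The main obstacle is the identification in the second step: faithfully transporting the arithmetic Frobenius $\Frob(v)$ to the residue-field Frobenius $\phi_\clU$ of $\Uj\kappa$. This forces one to spread $l\md k$ out over a genuine model of $k$ and, crucially, to verify that the resulting good-reduction loci lie in the prefilter $\clP_{\Sgm k}$ — without this the ultrafilters $\clU$ would not ``see'' the Frobenius at all, and this is exactly the place where the structural hypotheses on $\Sgm k$ are used. A secondary point to watch is that $\oli k^{\,\sigma}$ remembers only the closed procyclic subgroup $\oli{\langle\sigma\rangle}$, so in the ``$\Leftarrow$'' direction one recovers $\Frob(v)$ merely up to generating the correct procyclic subgroup at each finite level; but this is precisely the information carried by the co-procyclic extension $\oli k^{\,\sigma}\md k$, so nothing is lost.
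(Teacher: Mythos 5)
Your argument is correct and hits the same two load-bearing points as the paper's proof --- in the forward direction, the prefilter generated by $\clP_{\Sgm k}$ together with the sets $U_{A,\,l|k}(\sigma)$ (your $W_l$), whose finite intersection property is exactly $\Sgm k$-definability; in the converse, the finite set $B\subset k^\times$ cutting out the \'etale locus so that $\clU$-almost all $v$ are unramified in $l\md k$ --- but it is organized around a different object. The paper never leaves $\Ust k$: working one finite Galois layer $l\md k$ at a time, it reads off $l\cap k_v=l^{\Frob(v)}$ from unramifiedness, transfers this by \L o\'s to $l\cap\Ust k$, and takes the union over $l$; the residue field $\Uj\kappa$, its quasi-finiteness, and Henselianity of $\Ust v$ play no role. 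You instead descend to $\Uj\kappa$, identify $\oli k\cap\Ust k$ with $\oli k\cap\Uj\kappa$ by Hensel lifting, and package all of $\clU$'s arithmetic into the single residue Frobenius $\rho_\clU$, comparing it with the $\Frob(v)$ once and for all. This buys a slightly sharper intermediate statement (conjugacy of the \emph{elements} $\rho_\clU|_l$ and $\Frob(v)$, not merely of the fixed fields, since both act as $x\mapsto x^{q_v}$ on residues) and makes the link to Proposition~\ref{prp1} transparent; the cost is the extra input of Fact~\ref{fkt4} and a small caveat about lifting inseparable residues when $\chr(k)=p>0$, which the paper's more elementary route avoids.

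The point you flag at the end is real and worth keeping: $\oli k^{\,\sigma}=\Ust k\cap\oli k$ only pins down $\oli{\langle\sigma\rangle}$ up to conjugacy, while the definition of $U_{A,\,l|k}(\sigma)$ literally asks for $\Frob(v)=\sigma|_l$. The paper's own converse makes the identical leap (from $l^{\Frob(v)}=l\cap k_v=l^{\sigma}$ to $v\in U_{A_l,\,l|k}(\sigma)$), so your proof is no less rigorous than the original; both establish the equivalence for the fixed-field (equivalently, procyclic-subgroup) reading of the Frobenius condition, which is the only reading used downstream.
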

\begin{proof}
For the direct implication,  notice that 
$\clP_{\Sgm k}(\sigma):=\{U_{A,\,l|k}\}_{A,\,l|k}$ 
is a prefilter on $\Sgm k$ such that any ultrafilter 
$\clU$ containing $\clP_{\Sgm k}(\sigma)$ 
contains $\clP_{\Sgm k}$. Let $l\md k$ be a 
finite Galois extension. Then for 
$v\in U\hb3_{A,\,l|k}(\sigma)\in\clU$, setting 
$l_v:=lk_v$ one has: $l_v|k_v$ is unramified 
and $l^\sigma=l\cap k_v$. Hence
$l^\sigma=l\cap\Ust k$, and finally 
$\oli k^\sigma=\oli k\cap\Ust k$. 
\vskip2pt
Conversely, let $\clU$ be such that 
$\,\,\oli k^\sigma\hb3=\Ust k\cap\oli k$.
To show that $\sigma$ is $\Sgm k$-definable,
we have to show that all the sets 
$U\hb3_{A,\,l|k}(\sigma)$ are non-empty. First, 
since $\,\,\oli k^\sigma\hb3=\Ust k\cap\oli k$, it
follows that for every finite Galois extension
$l\md k$, one has $l^\sigma=\Ust k\cap l$. 
Hence for every $l\md k$ there exists a set 
$V_l\in\clU$ such that 
for all $v\in V_l$ one has $l^\sigma=k_v\cap l$.
Further, let $U\hb3_A\subset\Sgm k$ be given. 
Since $\clP_{\Sgm k}\subset\clU$, hence 
$U\hb3_A\in\clU$, w.l.o.g., we can suppose 
that $V_l\subset U\hb3_A$. Second,
let $B\subset k^\times$ be a finite set such 
that all discrete valuations $w$ of $k$ with
$w(B)=0$ are unramified in $l\md k$. (Note
that such sets $B$ exist: If $X_l\to X$ is
the normalization of $X$ in the finite Galois
extension $l\md k$, then there exists an affine
open dense subset $X'\subset U$ such that
$X_l$ is \'etale above $X'\!$. Hence if 
$w$ has its center in $X'\!$, then $w$ is 
unramified in $l\md k$, etc.)
Then setting $A_l:=A\cup B$, one has: 
$V_l\cap U\hb3_{A_l}\in\clU$, and all
$v\in V_l\cap U\hb3_{A_l}$ are unramified
in $l\md k$. Hence $U\hb3_{A_l,\,l|k}\neq\vid$,
thus $U\hb3_{A,k|l}\supset U\hb3_{A_l,\,l|k}$
is non-empty as well, thus concluding that 
$\sigma$ is $\Sgm k$-definable.
\end{proof}
\begin{definition}
For $k$ as above, let $M\md N\md k$ be field
extensions, and $N'\md N$ be algebraic. 
\itm{25}{
\item[1)] $N'\md N$ is called co-procyclic 
$\Sigma_k$-definable, if $N'=\oli N^{\,\sigma_N}$ 
for some $\sigma_N\in G_N:=\Aut{_N}{\oli N}$ 
which is itself $\Sigma_k$-definable, i.e.,  the 
restriction $\sigma:=\sigma_N|_{\oli k}\in G_k$ is 
$\Sgm k$-definable. 
\vskip2pt
\item[2)] $M\md N$ is called $N'$-split, 
or split above $N'\!$, if the $N'$-algebra 
$M\otimes_NN'$ has a factor $M'$ which is a field,
hence $M'\md N'$ is a field extension.
}
\end{definition}
%
%
\begin{proposition}
\label{prp1}
Let $M\md N$ be function fields over $k$ as 
in {\rm Example \ref{xmpl1},~2),~a)}. One has: 
\vskip2pt
\itm{25}{
\item[{\rm 1)}] An algebraic extension $N'|N$ 
is co-procyclic $\Sgm k$-definable if and
only if there is $\,\clU$ and a $k$-embedding 
$N\hra\Uj\kappa$ such that $N'=\oli N\cap\Uj\kappa$.
\vskip2pt
\item[{\rm 2)}] Let $N'=\oli N\cap\Uj\kappa$ 
as above be given. Then $M\md N$ is split above 
$N'$ iff $\,M\md N$ is separably generated and 
$N\hra\Uj\kappa$ prolongs to a field embedding 
$M\hra\Uj\kappa$.
}
\end{proposition}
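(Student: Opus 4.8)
The plan rests on two ingredients already in hand: Fact~\ref{fkt4-5}, which converts $\Sgm k$-definability of $\sigma\in G_k$ into the equality $\oli k^{\,\sigma}=\Ust k\cap\oli k$, and the structural Fact~\ref{fkt4}, by which $\Uj\kappa$ is a perfect, $\aleph_1$-saturated PAC field with $G_{\Uj\kappa}\cong\widehat\lvZ$. Throughout I will freely identify $\Ust k\cap\oli k$ with its residue image $\oli k\cap\Uj\kappa$: an algebraic extension of a trivially valued field is trivially valued, so $\Ust v$ is trivial on $\Ust k\cap\oli k$, and by Henselianity (Notations/Remarks~\ref{notarem2}) every element of $\oli k\cap\Uj\kappa$ lifts back into $\Ust\clO\cap\oli k$. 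As for the phrase ``$M\md N$ split above $N'$'', I will use the form convenient here: since $N'=\oli N^{\,\sigma_N}$ with $\sigma_N$ fixing $N'$ pointwise, it asks that $\sigma_N$ stabilise a factor of $M\otimes_N\oli N$, equivalently that $M\otimes_N N'$ have a field factor $M'$ with $N'$ relatively algebraically closed in $M'$; when $M\md N$ is separable, $M\otimes_N N'$ is a \emph{finite} product of fields (because $\oli N\cap M$ is finite over $N$, being finitely generated and algebraic), and the content is precisely that one of these factors be regular over $N'$.

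For 1), the easy direction is ``embedding $\Rightarrow$ definable''. Given $N\hra\Uj\kappa$ with $N'=\oli N\cap\Uj\kappa$, I would note that $\Uj\kappa\md N'$ is regular ($N'$ is relatively algebraically closed in $\Uj\kappa$, and perfect since $\Uj\kappa$ is), so restriction gives $\widehat\lvZ\cong G_{\Uj\kappa}\xrightarrow{\ \sim\ }G_{N'}$; hence $G_{N'}$ is procyclic and $N'=\oli N^{\,\sigma_N}$ for a topological generator $\sigma_N\in G_{N'}\le G_N$. The elements of $\oli k$ fixed by $\sigma_N$ are exactly $\oli k\cap N'=\oli k\cap\Uj\kappa$, so $\oli k^{\,\sigma}=\oli k\cap\Uj\kappa$ for $\sigma:=\sigma_N|_{\oli k}$, and Fact~\ref{fkt4-5} says $\sigma$ is $\Sgm k$-definable. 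Conversely, given $N'=\oli N^{\,\sigma_N}$ with $\sigma:=\sigma_N|_{\oli k}$ definable, pick $\clU$ with $\oli k^{\,\sigma}=\Ust k\cap\oli k$ and set $k':=\oli k\cap\Uj\kappa=\oli k^{\,\sigma}$. Then $\oli k\cap N'=\oli k^{\,\sigma_N}=k'$, so $N'\md k'$ is regular with $G_{N'}\cong G_{k'}=\oli{\langle\sigma\rangle}$ procyclic, the latter a quotient of $G_{\Uj\kappa}\cong\widehat\lvZ$. It remains to embed $N$ into $\Uj\kappa$ over $k$ with relative algebraic closure \emph{exactly} $N'$: the plan is to first extend $k\hra\Uj\kappa$ to $k'\hra\Uj\kappa$ (available since $k'=\oli k\cap\Uj\kappa$), and then invoke the embedding results for PAC fields (cf.\ \cite{F-J}): as $\Uj\kappa$ is PAC and $\aleph_1$-saturated, $G_{\Uj\kappa}$ surjects onto $G_{N'}$ compatibly with the restriction isomorphisms to $G_{k'}$, and $N'\md k'$ has finite transcendence degree, so $N'$ embeds into $\Uj\kappa$ over $k'$ with $\oli{N'}\cap\Uj\kappa=N'$; restrict to $N$. \emph{This last step is the crux of the argument: it is where one needs the PAC property (to realise the transcendental part of $N$ as a sufficiently generic point, so the relative algebraic closure does not grow) together with $G_{\Uj\kappa}\cong\widehat\lvZ$ (so the procyclic algebraic layer $N'\md k'$ can be matched exactly).}

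For 2), fix $\jmath:N\hra\Uj\kappa$ with $N'=\oli N\cap\Uj\kappa$. For ``$\Leftarrow$'': if $M\md N$ is separably generated and $\jmath$ extends to $\tl\jmath:M\hra\Uj\kappa$, I would set $M':=\tl\jmath(M)\cdot N'\subseteq\Uj\kappa$; it is a field, any element of it algebraic over $N'$ is algebraic over $N$, hence lies in $\oli N\cap\Uj\kappa=N'$, so $N'$ is relatively algebraically closed in $M'$ and $M'\md N'$ is regular (separable, as $M\md N$ is). Since $M\otimes_N N'$ is a finite product of fields (reduced, $0$-dimensional over $M$, with $\oli N\cap M$ finite over $N$), the surjection $M\otimes_N N'\twoheadrightarrow M'$ identifies $M'$ with one of these factors, so $M\md N$ is split above $N'$. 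For ``$\Rightarrow$'': assume $M\otimes_N N'$ has a field factor $M'$ regular over $N'$. The first task is to show $M\md N$ is separable, hence separably generated; I expect this to be the delicate point. The argument I have in mind: $N'$ is perfect, so $N^{1/p}\subseteq N'$ and $M\otimes_N N^{1/p}\subseteq M\otimes_N N'$; since $N^{1/p}\md N$ is radicial, $M\otimes_N N^{1/p}$ is local, so any inseparability of $M\md N$ is present as nilpotents there already over the base --- before the (algebraic) base change to $N'$ can split the algebra --- forcing every factor of $M\otimes_N N'$ to be non-reduced, a contradiction. Granting this, $M'=M\cdot N'$ is a finitely generated, regular extension of $N'=\oli N\cap\Uj\kappa\subseteq\Uj\kappa$; since $\Uj\kappa$ is PAC of infinite transcendence degree it is existentially closed in $M'\cdot\Uj\kappa\md\Uj\kappa$, and as $M'\md N'$ is geometrically integral the generic point of a model of $M'$ over $N'$ is realised in $\Uj\kappa$, giving $M'\hra\Uj\kappa$ over $N'$ and hence $M\hra\Uj\kappa$ prolonging $\jmath$.
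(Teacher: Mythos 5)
Your proof is correct and follows essentially the same route as the paper's: part 1) rests on Fact~\ref{fkt4-5} together with the surjection $\widehat\lvZ\cong G_{\Uj\kappa}\srjr G_{N'}$, and part 2) on the observation that inseparability of $M\md N$ forces nilpotents in every factor of $M\otimes_N N'$ (since $N'\supseteq N^{1/p}$) combined with the PAC-plus-$\aleph_1$-saturation argument producing ``generic'' points, exactly as in the paper. The only packaging differences are that in the converse of 1) the paper realizes $N\hra\Uj\kappa$ by writing down an explicit type over $k'$ encoding the tower of cyclic subextensions of $\oli N\md N'$ and invoking finite satisfiability plus saturation, where you cite the Fried--Jarden embedding lemma (the same argument in packaged form), and that your upfront reading of ``split above $N'$'' as ``$M\otimes_N N'$ has a factor regular over $N'$'' is precisely the reading the paper's own proof (and its Corollary~\ref{xmpl2}) uses.
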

\begin{proof} To 1): To the direct implication: 
Since $\kappa_\UU$ is a 
perfect pseudo-finite field, $k\hra N\hra \Uj\kappa$
gives rise to embedding of perfect fields 
$k'=\oli k\cap\Uj\kappa\hra N'=\oli N\cap\Uj\kappa
\hra\Uj\kappa$ and to surjective projections 
$\widehat\lvZ\cong G_{\kappa_\UU}\srjr G_{N'}
\srjr G_{k'}$. Hence $N'|\ha1N$ is by mere 
definitions co-procyclic and $\Sgm k$-definable. 
For the converse implication, let $N'|\ha1N$ be 
co-procyclic and $\Sgm k$-definable. Then 
$k':=\oli k\cap N'$ is obviously co-procyclic 
and $\Sgm k$-definable. Hence, there is some 
$\clU$ such that $k'=\oli k\cap\kappa_\UU$, 
and obviously, $N'|k'$ is a regular field 
extension. We claim that there is a $k$-embedding 
$N\hra\kappa_\UU$ 
such that $N'=\oli N\cap\kappa_\UU$, hence 
$k'\subset N'$. First, $N'_0:=Nk'\subset N'$ 
is a regular function field over $k'\!$, and 
setting $\tilde N_0=N'_0$, 
there is an increasing sequence of cyclic field 
subextensions $(\tilde N_i|N'_i)_{i\in\lvN}$ 
of $\oli N|N'$ such that $N'=\cup_{i\in\lvN}N'_i$, 
$\oli N=\cup_{i\in\lvN}\tilde N_i$, and 
$\tilde N_i|N'_i$ is the maximal subextension
of $\oli N|N'$ of degree $\leqslant i$. By algebra 
general non-sense, the sequence $(\tilde N_i|N'_i)_i$
and the conditions it satisfies are expressible
by a type $p(\bm t)$ over $k'$, where $\bm t$
is a transcendence basis of $N_0|k'$; and since
$\kappa_\UU$ is a perfect PAC quasi-finite 
field, the type $p(\bm t)$ is finitely satisfiable.
Thus $\kappa_\UU$ being $\aleph_1$-saturated, 
the type $p(\bm t)$ is satisfiable in $\kappa_\UU$,
thus $N=N_0$ has a $k'$-embedding
$N\hra\kappa_\UU$ such that $N'=\oli N\cap\kappa_\UU$.
\vskip2pt
To 2): For the direct implication, let $M'$ 
be a factor of $M\otimes_N\hb1N'$ such that 
$M'|N'$ is a regular field extension. Since 
$N'|\ha1N$ contains the prefect closure of $N$,
it follows that $M\md N$ must be separably 
generated (because otherwise all the factors 
of $M\otimes_N N'$ have non-trivial nilpotent 
elements). Hence $M=N(Z_N)$ for integral
$N$-variety $Z_N$ such that $Z_N\times_N N'$
has a geometrically integral irreducible
component $Z_{N'}$ of multiplicity one
with $M'=N'(Z_{N'})$. Since $\kappa_\UU$ is a 
$\aleph_1$-saturated PAC (quasi-finite) field,
$Z_{N'}(\kappa_\UU)$ contains ``generic points'' 
of $X_{N'}$, that is, $M'$ is $N'$-embeddable 
into $\kappa_\UU$. 
\vskip2pt
For the reverse implication, since $M\md N$ 
is separably generated, it follows that
$M\otimes_N N'$ is a product of fields. 
Let $M\hra\kappa_\UU$ be a prolongation 
of $N\hra\kappa_\UU$. Then 
\[
N':=\oli N\cap\kappa_\UU\hra
\oli M\cap\kappa_\UU=:M'\hra\kappa_\UU
\]
are co-procyclic extensions, and $M\otimes_NN'$ 
has a factor $M_{N'}$ which is $N'$-embeddable in 
$M'$. Since $N'$ is perfect, $N'=\oli N\cap M'\hra M'$
is regular, hence $M_{N'}|N'$ is regular.  
\end{proof}
\noindent
The Proposition~\ref{prp1} above hints at the 
following generalization of the pseudo-splitness:
\begin{definition}
\label{dfn2}
In Notations/Remarks~\ref{notarem2}, let 
$M\md N$ be a $k$-field extension, and 
$\jmath:N\to\Uj\kappa$ be a field $k$-embedding.
\vskip2pt
\itm{25}{
\item[1)] An algebraic $k$-field extension 
$N'\md N$ is called \defi{$\jmath$-definable}, 
if $N'$ is isomorphic to $\oli N\cap\Uj\kappa$ 
as $N$-field extensions. To simplify notation, 
we write $N'=\oli N\cap\Uj\kappa$. 
\vskip2pt
\item[2)] For $\jmath:N\hra\Uj\kappa$ defining
$N'\md N$ as above, $M|N$ is called \defi{\jspl}, 
if $M\md N$ is separably generated, and $\jmath$ 
prolongs to $M\hra\Uj\kappa$. 
}
\end{definition}
%
%
\begin{proposition}
\label{prp2}
Let $M|N$ be an extension of $k$-function 
fields over $k$. Let $\jmath:N\hra\Uj\kappa$ be a 
$k$-embedding, and $N'=\oli N\cap\Uj\kappa$ 
be a $\jmath$-definable extension of $N$. One has:
\vskip2pt
\itm{25}{
\item[{\rm 1)}] Let $M=N(Z_N)$ with $Z_N$ 
an integral $N$-variety. Then $M\md N$ is \jspl\ 
iff $\,Z_N\!\times_{\!N}\! N'$ is geometrically 
reduced and $Z_N(\Uj\kappa)$ is Zariski dense.
\vskip2pt
\item[{\rm 2)}] In particular, for a 
$N'=\oli N\cap\Uj\kappa$ as above, 
the following hold:
\vskip2pt
\itm{20}{
\item[{\rm a)}] If $\Uj\kappa$ is PAC, 
$M\md N$ is $N'$-split iff $\,M\!\otimes_N\! N'$ 
factor $M'$ with $M'|\ha1N'$ regular. 
\vskip2pt
\item[{\rm b)}] If $\chr(k)=0$, $M\md N$ is 
$N'$-split iff $\,\jmath\!:\!N\!\hra\!\Uj\kappa$ 
has a prolongation $M\hra\Uj\kappa$. 
} 
}
\end{proposition}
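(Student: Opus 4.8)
The plan is to reduce both assertions to the single question of whether the embedding $\jmath$ prolongs to an embedding $M\hra\Uj\kappa$, and to answer that by the mechanism already used for Proposition~\ref{prp1}: finite satisfiability over the PAC field $\Uj\kappa$ together with its $\aleph_1$-saturation (Fact~\ref{fkt4}). As a preliminary I would note that, since $\oli{N'}=\oli N$, the $N'$-scheme $Z_N\times_N N'$ is geometrically reduced iff $Z_N$ is geometrically reduced over $N$, iff $M\otimes_N\oli N$ is reduced, iff $M|N$ is separably generated. Thus part~1) reduces to: assuming $M|N$ separably generated, $\jmath$ prolongs to $M\hra\Uj\kappa$ iff the set $Z_N(\Uj\kappa)$ of $\Uj\kappa$-points of $Z_N$ obtained via $\jmath$ is Zariski dense in $Z_N$.

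One implication is immediate. A prolongation $\tl\jmath:M\hra\Uj\kappa$ of $\jmath$, restricted to the coordinate ring $R$ of an affine open $U\subseteq Z_N$ with $\operatorname{Frac}(R)=M$, is an injective $N$-algebra homomorphism $R\hra\Uj\kappa$, hence a $\Uj\kappa$-point of $Z_N$ lying over the generic point of $Z_N$; its closure being all of $Z_N$, the set $Z_N(\Uj\kappa)$ is Zariski dense. For the converse, let $\tl N:=\oli N\cap M$ be the field of constants of $M|N$, which is finite and separable over $N$, so that $M|\tl N$ is a regular function field. Shrinking $Z_N$ to an affine open $U$ on which the constants are regular functions --- which preserves Zariski density of the $\Uj\kappa$-points, $Z_N$ being irreducible --- I would pick one point of $U(\Uj\kappa)$ and restrict it to $\tl N$, obtaining an $N$-embedding $\iota:\tl N\hra\Uj\kappa$ whose image is algebraic over $\jmath(N)$ and hence contained in $N'=\oli N\cap\Uj\kappa$. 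Then the base change $U_\iota:=U\times_{\tl N,\iota}\Uj\kappa$ is geometrically integral over $\Uj\kappa$ (because $M|\tl N$ is regular), and its $\Uj\kappa$-points are exactly the $\Uj\kappa$-points of $U$, taken via $\jmath$, whose induced embedding of $\tl N$ into $\Uj\kappa$ is $\iota$. So it suffices to produce a $\Uj\kappa$-point of $U_\iota$ lying over the generic point of $U$ --- \emph{not} over the generic point of $U_\iota$: such a point yields an injection $R\hra\Uj\kappa$ prolonging $\jmath$, hence the sought $M\hra\Uj\kappa$.

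The existence of this last point is the crux, and the step I expect to be the main obstacle. I would obtain it exactly as in the proof of Proposition~\ref{prp1}. Passing to a finitely generated --- hence countable --- subfield $N_0\subseteq N$ over which $U$, $\tl N$ and the structure morphism $U\to\Spec\tl N$ all descend, one writes the condition ``lie over the generic point of $U$'' as a type $p(\bm x)$ over $N_0$; it is finitely satisfiable because, $\Uj\kappa$ being PAC, every non-empty open subvariety of the geometrically integral $\Uj\kappa$-variety $U_\iota$ has a $\Uj\kappa$-point; and then the $\aleph_1$-saturation of $\Uj\kappa$ (Fact~\ref{fkt4}) realizes $p$, producing the desired $N$-algebra embedding $R\hra\Uj\kappa$ and hence a prolongation $M\hra\Uj\kappa$ of $\jmath$. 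The delicate points will be the descent to a countable field of definition, so that saturation applies, and the bookkeeping with the field of constants that guarantees the resulting point lies over the generic point of $U$ rather than merely over some point of $U_\iota$.

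Finally, part~2) should be read off from part~1) by specialising: to the PAC case for a), and to the case $\chr(k)=0$ --- where, by Fact~\ref{fkt4}, $\Uj\kappa$ and hence $N'$ are moreover perfect --- for b). In both cases one only has to translate between being $N'$-split, having a factor of $M\otimes_N N'$ regular over $N'$, and being $\jmath$-split; the translation rests on $N'=\oli N\cap\Uj\kappa$ being relatively algebraically closed in $\Uj\kappa$ --- and perfect in case b) --- together with the $\aleph_1$-saturation that comes with the PAC hypothesis. I do not expect part~2) to require any genuinely new idea beyond part~1).
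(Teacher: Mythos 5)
Your overall strategy for part~1) --- produce a $\Uj\kappa$-point of $Z_N$ lying over the generic point by combining finite satisfiability of a ``generic type'' with the $\aleph_1$-saturation of $\Uj\kappa$ --- is the same as the paper's, and your use of the field of constants $\tl N=\oli N\cap M$ in place of the paper's decomposition of $Z_N\times_N N'$ into irreducible components is essentially a repackaging of the same data. There is, however, one genuine gap in your proof of the converse of part~1): you justify finite satisfiability by the claim that, $\Uj\kappa$ being PAC, every non-empty open of the geometrically integral $\Uj\kappa$-variety $U_\iota$ has a $\Uj\kappa$-point. Part~1) carries no PAC hypothesis --- PAC enters only in part~2a) --- and in the general setting of Notations/Remarks~\ref{notarem2} the field $\Uj\kappa$ need not be PAC (e.g.\ for $k$ the function field of a variety over a non-PAC field $k_0$, $\Uj\kappa$ is an ultraproduct of finite extensions of $k_0$). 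So the crucial step of your argument fails in exactly the generality in which the statement is made.

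The repair is to use the density hypothesis, which as written you never exploit beyond $Z_N(\Uj\kappa)\neq\vid$. Since $\tl N\md N$ is finite, there are only finitely many $N$-embeddings $\tl N\hra\Uj\kappa$ over $\jmath$, and the points of $U(\Uj\kappa)$ partition accordingly; as their union has dense image in the irreducible $U$, at least one class is dense, and $\iota$ must be chosen as the embedding attached to such a dense class --- not the embedding attached to an arbitrary single point, as you do. With that choice every non-empty open of $U$ meets the $\iota$-class, which gives finite satisfiability with no appeal to PAC; this is precisely the paper's step ``since $Z_N(\Uj\kappa)$ is Zariski dense, $Z'_\alpha(\Uj\kappa)$ is Zariski dense for some $\alpha$''. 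Your PAC argument is the right one, but it belongs one step later, in part~2a), where regularity of $M'\md N'$ together with PAC is what produces the density that part~1) takes as a hypothesis. The remaining delicate points you flag yourself (descent to a countable parameter set so that $\aleph_1$-saturation applies, and ensuring the realized point lies over the generic point of $U$ itself rather than merely over a point of $U_\iota$) are real, but they are shared with --- and glossed over by --- the paper's own proof; they are handled in the standard way by realizing a type of maximal transcendence degree over a countable field of definition.
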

\begin{proof} 
To 1): The implication $\Rightarrow$ is 
simply a reformulation in terms of algebraic 
geometry of the fact that $M|N$ is $N'$-split. 
For the converse implication, one has: First, 
$Z_{N'}\!:=Z_N\times_N N'$ being reduced, 
its ring of rational functions is the product of 
the function fields $M'_\alpha:=N'(Z'_\alpha)$
of the irreducible components $Z'_\alpha$
of $Z_{N'}$. Second, since $Z_N(\Uj\kappa)$ 
is Zariski dense, $Z'_\alpha(\Uj\kappa)$ is
Zariski dense for some $\alpha$. And since
$\Uj\kappa$ is $\aleph_1$-saturated, by general 
model theoretical non-sense, $Z'_\alpha(\Uj\kappa)$ 
contains ``generic points'' of the $N'$-variety
$Z'_\alpha$. Finally, each such point defines an 
$N'$-embedding $M'_\alpha=N'(Z_\alpha)\hra\Uj\kappa$, 
which prolongs $\jmath:N\hra \Uj\kappa$.
\vskip2pt
To~2): First, the implication $\Rightarrow$ is the
same as in assertion~1. The converse implication
in case b) is clear, and in case a) it follows from 
assertion~1): Since $\Uj\kappa$ is a PAC field, and 
$Z_{N'}$ is a geometrically integral $N'$-variety, 
it follows that $Z_{N'}(\Uj\kappa)$ is Zariski 
dense, etc.
\end{proof}
\begin{corollary}[\bf Example~\ref{xmpl1} revisited]
\label{xmpl2}
Let $k$ and $\Sgm k$ be as in {\rm Example
\ref{xmpl1},~2).} Let $N\md k$ be a function 
field over $k$, and $N'\md N$ be $\jmath$-definable. 
A $k$-extension of function fields $M\md N$ 
is $N'$-split iff $M\otimes_NN'$ 
has a factor $M'$ such that  $M'\md N'$ is a 
regular field extension.
\end{corollary}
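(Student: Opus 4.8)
The plan is to deduce the statement directly from Proposition~\ref{prp2}, assertion~2),~a), once we check that its standing hypothesis --- that the residue ultraproduct $\Uj\kappa$ is a PAC field --- is automatic in the present situation.

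First I would spell out what ``$N'\md N$ is $\jmath$-definable'' means: by Definition~\ref{dfn2}, in the setting of Notations/Remarks~\ref{notarem2}, it provides an ultrafilter $\clU$ on $\Sgm k$ with $\clP_{\Sgm k}\subset\clU$ together with a field $k$-embedding $\jmath:N\hra\Uj\kappa=\prod_v k_v v/\clU$ such that $N'$, as an extension of $N$, is identified with $\oli N\cap\Uj\kappa$. In particular $N'$ lies inside the perfect field $\Uj\kappa$, and $M\md N$ together with $N'=\oli N\cap\Uj\kappa$ is exactly the input required by Proposition~\ref{prp2}.

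Next I would invoke Fact~\ref{fkt4}: since $k$ and $\Sgm k$ are as in Example~\ref{xmpl1},~2), the field $\Uj\kappa$ is perfect and $\aleph_1$-saturated, and it is PAC --- quasi-finite in case~a), and PAC over the PAC base field $k_0$ in case~b). Hence the hypothesis of Proposition~\ref{prp2},~2),~a) is met, and that assertion yields precisely the asserted equivalence: $M\md N$ is $N'$-split if and only if $M\otimes_N N'$ has a factor $M'$ with $M'\md N'$ a regular field extension. For orientation, note that one implication (a regular field extension is in particular a field, so such a factor already witnesses $N'$-splitness) is formal; the content is the other direction, where Proposition~\ref{prp2} (via its part~1)) uses the PAC property and $\aleph_1$-saturation of $\Uj\kappa$ to see that a factor of $M\otimes_N N'$ which is a field must in fact be geometrically integral over $N'$, because $\Uj\kappa$ then contains ``generic points'' of the corresponding $N'$-variety.

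The only real obstacle is bookkeeping: one must make sure the embedding $\jmath$ witnessing $\jmath$-definability of $N'$ is taken to be one of the canonical embeddings $N\hra\Uj\kappa$ arising from an ultrafilter $\clU$ on this particular $\Sgm k$ (so that Fact~\ref{fkt4} applies to the relevant $\Uj\kappa$), and, in case~b), that one has imposed that $k_0$ is PAC, exactly as in Fact~\ref{fkt4},~2). Granting this, there is nothing further to do.
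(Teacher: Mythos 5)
Your reduction is exactly the paper's: the corollary carries no separate proof there and is meant as an immediate consequence of Proposition~\ref{prp2},~2),~a), with the PAC hypothesis on $\Uj\kappa$ supplied by Fact~\ref{fkt4} (and, as you correctly flag, case~b) of Example~\ref{xmpl1},~2) tacitly requires $k_0$ to be PAC). One small correction to your ``orientation'' aside: in Proposition~\ref{prp2} the PAC property and $\aleph_1$-saturation are used for the implication ``regular factor $\Rightarrow$ $N'$-split'' (Zariski density of $\Uj\kappa$-points on the geometrically integral component yields the prolongation of $\jmath$), whereas ``$N'$-split $\Rightarrow$ regular factor'' is the formal direction; note that a field factor of $M\otimes_N N'$ need not be geometrically integral over $N'$ in general (e.g.\ when $\oli N\cap M$ does not embed into $N'$), so the two directions play the opposite roles from what you describe.
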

\section{Proof of (Generalizations of) 
      Theorem~\ref{thm1}}
\subsection{Setup for a generalization of 
Theorem~\ref{thm1}\/}$\ha0$
\vskip5pt
The generalization of Theorem~\ref{thm1} we 
aim at is based on generalizing hypothesis~\Bstar, 
i.e., the notion of {\it pseudo-split morphism\/} 
$f:X\to Y\!$, as already hinted at in Definition~\ref{dfn2}.
In order to do so, we begin by recalling the 
following obvious facts concerning splitness.
\vskip2pt
First, let $M\md N$ be a field extension, and 
$N'\md N$ be an algebraic extension with  $N'$ 
perfect. Then the following are equivalent:
\vskip2pt
\itm{30}{
\item[i)] $M\md N$ is $N'$-split (and if so, 
$M\!\otimes_N\!N'$ has regular field extension
$M'|N'$ as a factor).
\vskip2pt
\item[ii)] $M\md N$ is separably generated, and 
$\oli N\cap M$ is embeddable in $N'$.
} 
\vskip2pt
Second, let $M\md N$ be $N'$-split, $L\md M$ 
be $M'$-split. The following {\it transitivity of 
splitness\/} holds:
\vskip2pt
\itm{30}{
\item[a)] If $N'=M'\cap\oli N$, then $L\md N$ is 
split above $N'\!$.
\vskip2pt
\item[b)] If $\tilde M\md N\hra M\md N$ and 
$\tilde N'\md N\hra N'\md N$ are subextensions, 
then $\tilde M\md N$ is $\tilde N'$-split.
}
Next recall that for morphisms $f:X\to Y$
of $k$-varieties, the reduced fiber $X_y\subset X$
at $y\in Y$, and $x\in X_y$, 
we set $L_y:=\kappa(y)\hra\kappa(x)=:K_x\,$. 
In particular, one has the canonical restriction
map $\clD(K_x|k)\to\clD(L_y|k)$, and for 
$v_x\in\clD(K_x|k)$ and $w_y:=v_x|_{L_y}$, 
one has the canonical $k$-embedding of residue 
function fields $l_y:=L_yw_y\hra K_xv_x=:k_x$.
\begin{definition}
\label{dfn4}
Let $k$, $\Sgm k$, and $\,\clU$ be as in 
Remarks/Notations~\ref{notarem2}. Recalling
Definition~\ref{dfn2}, for morphisms of 
$k$-varieties $f:X\to Y\!$, define:
\vskip2pt
\itm{25}{
\item[1)] $w_y\in\clD(L_y|k)$ is 
$\Sgm k$-pseudo-split under $f\hb1$, 
if for all $\clU$ and all $k$-embeddings 
$\jmath:l_y\hra\Uj\kappa$ there is 
$v_x\in\clD_{w_y}$ such that $k_x\md l_y$ 
is \jspl, and $e(v_x|w_y)=1$ if $w_y$ is non-trivial. 
\vskip2pt
\item[2)] We say that $f$ is \defi{$\Sgm k$-pseudo-split} 
if all $w_y\in\clD(L_y|k)$, $y\in Y\!$, are 
$\Sgm k$-pseudo-split under~$f\hb1$.
}
\end{definition} 
\noindent
Finally, the generalization of hypothesis~\Bstar\ 
we were hinting at is the following hypothesis:
\begin{displaymath}
\ \hbox{\Bstar$^\nix_{\Sgm k}$} \ha{55} 
\left. \begin{array}{c}
$\hb8$\hbox{\it $f:X\to Y$ is a $\Sgm k$-pseudo-split 
morphism of $k$-varieties.\ha{90}}
\end{array} \right.
\end{displaymath}
\noindent
Correspondingly, the natural generalization of
\Srj\ from Introduction is the property:
\begin{displaymath}
\ \hbox{\Srj$_{\Sgm k}$} \ha{20}
\left. \begin{array}{c}
\hbox{\it $f^\kv:X(\kv)\to Y(\kv)$ is surjective for 
all $v\in U\hb3_A$ for some $A\subset k^\times\!$.\ha{100}}
\end{array} \right.
\end{displaymath}
\noindent
Notice that for number fields $k$ and 
$\Sgm k=\lvP(k)$ one has: The hypotheses 
\Bstar\ and \Bstar$_{\Sgm k}$ are equivalent, 
and so are properties \Srj\ and \Srj$_{\Sgm k}$.
Further, \defi{(qAKE)$_{\Sgm k}$} holds (by 
the usual Ax--Kochen--Ershov Principle).
Hence Theorem~\ref{thm1} follows from
the more general:
\begin{theorem}
\label{thm2}
In Notations$\,/$Remarks~\ref{notarem2}, 
let $k$ endowed with $\Sgm k$ satisfy
{\rm\defi{(qAKE)$_{\Sgm k}$}}. Then for 
a morphism $f:X\to Y$ of $k$-varieties the 
following hold:
\vskip2pt
\itm{25}{
\item[{\rm 1)}] If $f$ satisfies hypothesis 
\Bstar$_{\Sgm k}$, then $\,f$ has property 
{\rm \Srj$_{\Sgm k}$}.
\vskip2pt
\item[{\rm 2)}] Let $\chr(k)=0$. Then $\,f\,$ 
satisfies hypothesis \Bstar$_{\Sgm k}\,$ iff 
$\,f\,$ has property {\rm \Srj$_{\Sgm k}$}.
} 
\end{theorem}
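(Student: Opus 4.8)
The plan is to reduce both assertions to the ultraproduct picture set up in Notations/Remarks~\ref{notarem2} and Fact~\ref{fkt1}. By Fact~\ref{fkt1}, property \Srj$_{\Sgm k}$ is equivalent to $f^{\Ust k}:X(\Ust k)\to Y(\Ust k)$ being surjective for all ultrafilters $\clU$ on $\Sgm k$ with $\clP_{\Sgm k}\subset\clU$. By \defi{(qAKE)$_{\Sgm k}$}, each embedding $\Uj k\hra\Ust k$ is quasi-elementary, so Fact~\ref{fkt2}.2) tells us this is in turn equivalent to $f^{\Uj k}:X(\Uj k)\to Y(\Uj k)$ being surjective for all $\clU$; and since $\Uj k=\kappa_\UU(\Uj\pi)^h$ is the Henselization of a rational function field in one variable over $\Uj\kappa$, a point of $X(\Uj k)$ (resp.\ $Y(\Uj k)$) amounts to a $k$-place of a residue function field into $\Uj k$, i.e.\ (passing to the specialization along $\Uj\pi$) to a $k$-embedding of the function field of a subvariety into $\Uj k$ dominating a divisorial valuation. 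So the whole statement translates into a purely field-theoretic question about which $w_y\in\clD(L_y|k)$, together with $k$-embeddings $\Uj k$-valued lifts, can be lifted through $f$; this is exactly the combinatorics encoded by $\Sgm k$-pseudo-splitness.

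For assertion~1), assume \Bstar$_{\Sgm k}$ and fix $\clU$ and $\Ust y\in Y(\Ust k)$; we must produce $\Ust x\in X(\Ust k)$ over it. Using the Henselian valued field structure on $\Ust k$ (Notations/Remarks~\ref{notarem2}, and Fact~\ref{fkt2} to descend from $\Ust k$ to $\Uj k$), the point $\Ust y$ specializes to a point $\bar y\in Y$ whose residue field $L_y=\kappa(\bar y)$ admits a $k$-place into $\Uj k$; decomposing this place, one obtains a valuation $w_y\in\clD(L_y|k)$ and a $k$-embedding $\jmath:l_y=L_yw_y\hra\Uj\kappa$. By hypothesis, $w_y$ is $\Sgm k$-pseudo-split under $f$: applied to this very $\jmath$, it yields $x\in X_y$ and $v_x\in\clD_{w_y}$ with $k_x\md l_y$ \jspl\ and $e(v_x|w_y)=1$ when $w_y$ is nontrivial. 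By Definition~\ref{dfn2}.2) and Proposition~\ref{prp2}, \jspl ness means $\jmath$ prolongs to $k_x\hra\Uj\kappa$; using $e(v_x|w_y)=1$ (so the value groups match and the uniformizer of $w_y$ pulls back to a uniformizer of $v_x$), one lifts the residue embedding $k_x\hra\Uj\kappa$ together with the compatible place to a $k$-embedding of $K_x$ into a Henselization sitting inside $\Ust k$ with value group $\lvZ$, i.e.\ into $\Uj k\hra\Ust k$ (Fact~\ref{fkt3} or \defi{(qAKE)$_{\Sgm k}$} again for the final passage). This embedding is precisely a point $\Ust x\in X(\Ust k)$ with $f^{\Ust k}(\Ust x)=\Ust y$. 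Conclude by Fact~\ref{fkt1}.

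For assertion~2), with $\chr(k)=0$ we additionally have the full Ax--Kochen--Ershov Principle (Fact~\ref{fkt3}), so $\Uj k\hra\Ust k$ is genuinely elementary, and the residue field $\Uj\kappa$ is a perfect PAC (quasi-finite, in case~a)) $\aleph_1$-saturated field by Fact~\ref{fkt4}. This gives the converse: suppose $f$ is \emph{not} $\Sgm k$-pseudo-split, so some $w_y\in\clD(L_y|k)$ fails to be pseudo-split, witnessed by some $k$-embedding $\jmath_0:l_y\hra\Uj\kappa$ for an appropriate $\clU$ (here one needs to realize a prescribed residue embedding inside some $\kappa_\UU$ — this is where $\aleph_1$-saturation and Corollary~\ref{xmpl2} enter, exactly as in Proposition~\ref{prp1}.1)). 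For this $\jmath_0$, no $v_x\in\clD_{w_y}$ is simultaneously unramified over $w_y$ and \jspl; running the lifting argument of part~1) in reverse, this obstructs extending the corresponding $\Ust y\in Y(\Ust k)$ to any $\Ust x\in X(\Ust k)$ — the point being that every point of $X(\Ust k)$ over $\Ust y$ would, by specialization, produce such a $v_x$ (the residue extension $k_x\md l_y$ must be separably generated since $\chr=0$, so there is no nilpotent obstruction, and the embedding $K_x\hra\Ust k$ forces $e(v_x|w_y)=1$). Hence $f^{\Ust k}$ is not surjective and \Srj$_{\Sgm k}$ fails.

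**Main obstacle.** The delicate point is the dictionary between $\Ust k$-rational points of $X$ and pairs $(v_x\in\clD(K_x|k),\ \jmath:k_x\hra\Uj\kappa)$: one must show (i) that every $\Ust k$-point of $X$ specializes, via the valuation $\Ust v$, to a point $x\in X$ whose residue field embeds into the residue field $\Uj\kappa$ along a \emph{divisorial} valuation $v_x$ of value group $\lvZ$ with $e(v_x|w_y)=1$ — this is where the $\lvZ$-group structure of $\Ust v\Ust k$, the choice of uniformizer $\Uj\pi$, and passing to $\Uj k$ (rather than staying in $\Ust k$) are essential — and (ii) conversely, that any such pair, \jspl\ in the sense of Definition~\ref{dfn2}, really does integrate to a point in $X(\Uj k)\subseteq X(\Ust k)$. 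Getting the ramification bookkeeping ($e(v_x|w_y)=1$, uniformizer matching, Henselization inside $\Ust k$) exactly right, uniformly in $y\in Y$ and over arbitrary (non-smooth, non-proper, reducible) $X,Y$, is the technical heart; everything else is a formal consequence of Facts~\ref{fkt1}--\ref{fkt4} and Proposition~\ref{prp2}.
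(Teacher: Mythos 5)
Your overall strategy coincides with the paper's: Fact~\ref{fkt1} converts \Srj$_{\Sgm k}$ into surjectivity of $f^{\Ust k}$ for all $\clU$; the hypothesis \defi{(qAKE)$_{\Sgm k}$} together with Fact~\ref{fkt2} replaces $\Ust k$ by $\Us k=\Uj\kappa'(\Uj\pi)^h$; and what remains is the dictionary between $\Us k$-points of $X,Y$ and pairs $(w_y,\jmath)$, resp.\ $(v_x,\imath)$. This is exactly the paper's reduction of Theorem~\ref{thm2} to its Key Lemma~\ref{thekeylmm}, and your argument for part~2) is the contrapositive of the paper's direct one. So the route is the same; the issue is that the proposal stops essentially at the point where the paper's proof begins.

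Concretely, the step you state as ``one lifts the residue embedding $k_x\hra\Uj\kappa$ together with the compatible place to a $k$-embedding of $K_x$ into a Henselization sitting inside $\Ust k$'' is asserted, not proved, and it is not a formal consequence of Facts~\ref{fkt1}--\ref{fkt4} or Proposition~\ref{prp2}: it is the content of the Key Lemma. The mechanism the paper uses is: choose a separable transcendence basis $\clT_0$ of $k_x\md l_y$ (available precisely because the requirement that $k_x\md l_y$ be \jspl\ includes separable generation), lift it to $\clT\subset K_x$ and to $\Us\clT\subset\Ust k$, identify the resulting Gauss valuations on $N(\clT)$ and on $\Us N(\Us\clT)$, and then invoke the uniqueness of the unramified extension of a Henselian valued field with prescribed finite separable residue extension to transport $K_x$ into $\Ust k$; this splits into three cases ($v_x$ trivial; $v_x$ nontrivial but $w_y$ trivial, where Fact~\ref{fkt8} and a field of representatives for $K_xv_x$ are needed; $w_y$ nontrivial, where $e(v_x|w_y)=1$ is what permits $\pi\mapsto\Uj\pi$). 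Likewise, in your converse direction you must first \emph{construct} the point $\Ust y$ realizing a prescribed pair $(w_y,\jmath_0)$ --- for nontrivial $w_y$ the paper does this via a field of representatives $\kappa_w\subset L_y^h$ (which exists because $\chr(k)=0$) and $L_y^h=\kappa_w(\pi)^h$ --- and then verify that for \emph{any} $\Ust x$ over it the restriction $\Us v|_{K_x}$ is divisorial with $e(v_x|w_y)=1$, which uses the no-defect property (Fact~\ref{fkt7}) and the minimality of $\Us v(\Uj\pi)$ in the value group. None of these steps would fail, but as written the proposal records the correct plan while leaving the Key Lemma --- i.e.\ the actual proof --- unestablished.
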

The proof of Theorem~\ref{thm2} is reduced
to proving the Key Lemma~\ref{thekeylmm}
below as follows: First, by Fact~\ref{fkt1}, 
the property \Srj$_{\Sgm k}$ is equivalent to 
$f^{\Ust k}:X(\Ust k)\to Y(\Ust k)$ being surjective 
for all~$\,\clU$. Second, by Fact~\ref{fkt2} 
combined with Fact~\ref{fkt3}, and the 
hypothesis~\defi{(qAKE)$_{\Sgm k}$}, the 
surjectivity of $f^{\Ust k}$ is equivalent to the 
surjectivity of $f^{\Us k}:X(\Us k)\to Y(\Us k)$. 
Hence the property \Srj$_{\Sgm k}$ is equivalent 
to the following condition in terms of ultrafilters: 
\vskip5pt
\noindent \
\Srj$_\UU\ha{20}$ {\it $f^{\Us k}:X(\Us k)\to Y(\Us k)$ 
is surjective for every $\,\clU$.\/}
\vskip5pt
\noindent
This reduces the proof of Theorem~\ref{thm2}
to proving the following:
\begin{keylemma}
\label{thekeylmm}
Let $k$, $\Sgm k$ be as in
Notations$\,/$Remarks~\ref{notarem2}, and 
hypothesis {\rm\defi{(qAKE)$_{\Sgm k}$}} 
be satisfied. Then for a morphism $f:X\to Y$ 
of $k$-varieties the following hold:
\vskip2pt
\itm{25}{
\item[{\rm 1)}]  If $f$ satisfies hypothesis 
\Bstar$_{\Sgm k}$, then $\,f$ has the 
property {\rm \Srj$_\clU$}.
\vskip2pt
\item[{\rm 2)}] Let $\chr(k)=0$. Then $f$ satisfies 
hypothesis \Bstar$_{\Sgm k}$ iff $\,f\,$ has the property 
{\rm \Srj$_\clU$}.
}
\end{keylemma}
\vskip5pt
\noindent
\subsection{Proof of the Key Lemma~\ref{thekeylmm}}$\ha0$
\vskip5pt
\noindent
We begin by recalling basic facts from
valuation theory, which are well known 
to experts. 
\begin{fact}
\label{fkt6}
{\it Let $\Omega,w$ be a Henselian field with
$\chr(\Omega w)=0$. Then every subfield
$l\subset\Omega$ with $w|_l$ trivial is contained
in a field of representatives $\kappa'\subset\Omega$
for $\Omega w$.\/}
\end{fact}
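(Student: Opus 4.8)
The plan is to build $\kappa'$ as a \emph{maximal} subfield of $\clO_w$ that contains $l$ and on which $w$ is trivial, and then to use maximality to force the residue map to be surjective onto $\Omega w$. First I would form the set $\clS$ of all subfields $F\subseteq\clO_w$ with $l\subseteq F$ and $w|_F$ trivial; this set is non-empty, since $w|_l$ is trivial by hypothesis and hence $l\in\clS$, and it is closed under unions of chains, so Zorn's Lemma yields a maximal element $\kappa'\in\clS$. As $w$ is trivial on $\kappa'$, every non-zero element of $\kappa'$ is a unit of $\clO_w$, so reduction modulo $\eum_w$ restricts to an injective field homomorphism $\kappa'\hookrightarrow\Omega w$ with some image $\oli{\kappa'}\subseteq\Omega w$, and $\kappa'\to\oli{\kappa'}$ is an isomorphism. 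The crux is to show that $\kappa'\to\Omega w$ is onto, for then $\kappa'$ is a field of representatives for $\Omega w$ containing $l$, which is the assertion.

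To prove surjectivity I would argue by contradiction, assuming some $\bar a\in\Omega w\setminus\oli{\kappa'}$, and distinguish two cases. Suppose first that $\bar a$ is algebraic over $\oli{\kappa'}$, with monic minimal polynomial $\bar g\in\oli{\kappa'}[X]$. Since $\chr(\Omega w)=0$, the extension $\oli{\kappa'}(\bar a)\md\oli{\kappa'}$ is separable, so $\bar a$ is a \emph{simple} root of $\bar g$. Lifting $\bar g$ coefficientwise along the isomorphism $\oli{\kappa'}\cong\kappa'$ to a monic $g\in\kappa'[X]\subseteq\clO_w[X]$, Henselianity of $(\Omega,w)$ gives $a\in\clO_w$ with $g(a)=0$ and residue $\bar a$; moreover $g$ is irreducible over $\kappa'$ (a non-trivial monic factorization of $g$ would reduce to one of $\bar g$), so $\kappa'[a]\cong\kappa'[X]/(g)$ is a field. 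It lies in $\clO_w$, and $w$ is trivial on it: for $0\neq p\in\kappa'[X]$ of degree $<\deg g$, the inequality $w(p(a))>0$ would force the reduction $\bar p\in\oli{\kappa'}[X]$ to vanish at $\bar a$ with $\deg\bar p<\deg\bar g$, hence $\bar p=0$, hence $p=0$ since $w$ is trivial on $\kappa'$ --- a contradiction. Thus $\kappa'(a)=\kappa'[a]\in\clS$ strictly contains $\kappa'$, contradicting maximality.

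In the remaining case $\bar a$ is transcendental over $\oli{\kappa'}$. Choosing any lift $a\in\clO_w$ of $\bar a$, one checks that $a$ is transcendental over $\kappa'$ (an algebraic relation over $\kappa'$ would reduce to one for $\bar a$), and that for $p,q\in\kappa'[X]$ with $q\neq 0$ one has $p(a),q(a)\in\clO_w$ with residues $\bar p(\bar a),\bar q(\bar a)$, so $q(a)\in\clO_w^{\times}$ and $p(a)$ is a unit whenever $p\neq 0$; hence $w$ is trivial on $\kappa'(a)\in\clS$, which is again strictly larger than $\kappa'$ --- a contradiction. Therefore $\kappa'\to\Omega w$ is surjective, which finishes the proof. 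I expect the main obstacle to be the verification, in the algebraic case, that $w$ stays trivial on the enlarged field; this is exactly where the hypothesis $\chr(\Omega w)=0$ is used, through the separability of the residue extension that makes Hensel's Lemma applicable to the simple root of the lifted minimal polynomial.
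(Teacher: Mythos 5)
Your proof is correct and is precisely the standard argument that the paper alludes to when it dismisses this fact as ``a well known consequence of the Hensel Lemma'': a Zorn's Lemma maximality argument, with Hensel's Lemma (via separability from $\chr(\Omega w)=0$) handling the algebraic case and a direct residue computation handling the transcendental case. Nothing to add.
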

\begin{proof} 
This is a well known consequence of the Hensel 
Lemma.
\end{proof}
\noindent
We next recall basic facts about valuations 
without (transcendence) defect, see 
\cite{BOU},~Ch.\ha2VI, and \cite{Ku},
for some/more details on (special cases of)
this. Let $\Omega,w$ be a 
valued field with $w|_{\kappa_0}$ trivial
on the prime field $\kappa_0$ of $\Omega$. 
One says that $w$ has no \defi{(transcendence) 
defect} if there exists a transcendence basis of 
$\Omega\md\kappa_0$ of the form $\clT_w\cup\clT$ 
satisfying the following: First, $w\clT_w$
is a {\it basis\/} of the $\lvQ$-vector space 
$w\Omega\otimes\lvQ$, and second, $\clT$ 
consists of $w$-units such that its image 
$\clT\!w$ in the residue field $\Omega w$ 
is a {\it transcendence basis\/} of 
$\Omega w\md\kappa_0$. In particular, 
if $\kappa'_\clT\subset\Omega$ is the relative 
algebraic closure of $\kappa_0(\clT)$ in 
$\Omega$, then $\kappa'_\clT$ is a maximal 
subfield of $\Omega$ such that $w$ is trivial
on $\kappa'_\clT$, and further, $\Omega w$ is 
algebraic over $\kappa'_\clT w$. Moreover, 
if $w$ is Henselian, then Hensel Lemma
implies that $\Omega w$ is purely inseparable 
over $\kappa'_\clT w$. 
\vskip2pt
One of the main properties of valuations $w$ 
without defect is that for any subfield 
$N\subset\Omega$, the restriction of $w$ to 
$N$ is a valuation without defect as well,
see~\cite{Ku}. In particular, if $l\subset\Omega$
is any subfield such that $w|_l$ is trivial, and
$N\md l$ is a function field, then $w|_N$ is a
prime divisor of the function field $N|l$ if
and only if $w|_N$ is a discrete valuation. 
\vskip2pt
Hence for the field $\Us k=\Uj\kappa'(\Uj\pi)^h$ 
endowed $\Us v$ from Notations/Remarks
\ref{notarem2},~4),~c),  one has: 
\begin{fact}
\label{fkt7}
{\it Let $l\subset\Us k$ be a subfield with $\Us v$
trivial on $l$. Let $N\md l$ be a function field 
and $N\hra\Us k$ be an $l$-embedding. Then 
$w:=\Us v|_N$ is either trivial, or a prime divisor 
of $N\md l$}
\end{fact}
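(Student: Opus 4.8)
The plan is to deduce this from the no-defect discussion immediately preceding it. First I would observe that $\Us v$ on $\Us k = \Uj\kappa'(\Uj\pi)^h$ has no transcendence defect: take $\clT_w = \{\Uj\pi\}$, whose value generates $\Us v\Us k \otimes \lvQ = \lvQ$ (recall from Notations/Remarks~\ref{notarem2}, 4), c), that $\Us v\Us k = \lvZ$ with $\Uj\pi$ the element of minimal positive value), and take any transcendence basis $\clT$ of the residue field $\kappa(\Us v) = \Uj\kappa$ over the prime field, lifted to $\Us v$-units in $\Uj\kappa' \subset \Us k$. This exhibits $\clT_w \cup \clT$ as a transcendence basis of $\Us k$ over its prime field of the required form, so $\Us v$ has no defect. (When $\chr k = 0$ one may alternatively invoke Fact~\ref{fkt6} to see that $\Uj\kappa'$ is a field of representatives, but the no-defect framework works uniformly.)

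Next I would apply the quoted transitivity of the no-defect property: since $\Us v$ has no defect on $\Us k$, its restriction to any subfield has no defect. Apply this twice — to the subfield $l \subset \Us k$ and to the subfield $N \subset \Us k$ (via the given $l$-embedding $N \hra \Us k$) — to conclude that $w := \Us v|_N$ is a valuation of $N$ without defect, trivial on $l$ (because $\Us v|_l$ is trivial by hypothesis). Now the cited consequence for function fields applies: for a function field $N \md l$ and a no-defect valuation $w$ of $N$ that is trivial on $l$, one has that $w$ is a prime divisor of $N\md l$ if and only if $w$ is discrete.

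Finally I would dispose of the discreteness. The value group $wN$ is a subgroup of $\Us v\Us k = \lvZ$; every subgroup of $\lvZ$ is either trivial or isomorphic to $\lvZ$. In the first case $w$ is trivial on $N$; in the second case $w$ is discrete, hence by the previous paragraph $w$ is a prime divisor of $N\md l$. This gives exactly the stated dichotomy. I do not anticipate a genuine obstacle here — the content is entirely in correctly invoking the no-defect restriction theorem from \cite{Ku} and in the trivial observation that subgroups of $\lvZ$ are cyclic; the only point requiring a line of care is checking that $\Us v$ itself has no defect, which is where the explicit description $\Us k = \Uj\kappa'(\Uj\pi)^h$ with $\Us v\Us k = \lvZ$ generated by $\Uj\pi$ is used.
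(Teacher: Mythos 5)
Your proof is correct and follows the same route the paper intends: the paper's own proof of Fact~\ref{fkt7} is just the one-line remark that it is ``an immediate consequence of the discussion above,'' namely the no-defect framework, its stability under restriction to subfields, and the criterion that a no-defect $k$-trivial valuation on a function field is a prime divisor iff it is discrete. You have merely spelled out the details the paper leaves implicit (the explicit transcendence basis $\{\Uj\pi\}\cup\clT$ witnessing that $\Us v$ has no defect, and the observation that subgroups of $\lvZ$ are trivial or infinite cyclic), all of which is accurate.
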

\begin{proof} This is an immediate 
consequence of the discussion above. 
\end{proof}
\begin{fact}
\label{fkt8}
{\it Let $N^h$ be the Henselization of a function
field $N|l$ w.r.t.\ a prime divisor $w$. Let
$\kappa'\subset\Omega$ be a field of 
representatives for $\Nw$, and $\pi\in N$ 
have $w(\pi)=1$. Then
$N^h=\kappa'(\pi)^h\!$.\/}
\end{fact}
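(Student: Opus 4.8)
The plan is to realise $\kappa'(\pi)^h$ as a Henselian subfield of $N^h$ having the same value group and residue field, over which $N^h$ is moreover algebraic, and then to invoke the absence of defect to force equality. First I would record the basic numerology: since $w$ is a prime divisor of the function field $N\md l$, one has $wN=\lvZ$ and $\td(\Nw\md l)=\td(N\md l)-1$, so $N^h$ is Henselian with value group $wN^h=wN=\lvZ$ and residue field $N^hw=\Nw$. As we are in the case $\chr(\Nw)=0$, Fact~\ref{fkt6} allows me to choose the field of representatives $\kappa'$ for $\Nw$ inside $N^h$, and to enlarge it so that $l\subseteq\kappa'\subseteq N^h$. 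Since $w$ is trivial on $\kappa'$ and $w(\pi)=1$, the element $\pi$ is transcendental over $\kappa'$, the restriction of $w$ to $\kappa'(\pi)$ is the $\pi$-adic (Gauss) valuation, with value group $\lvZ$ and residue field $\kappa'$, and hence $\kappa'(\pi)^h\subseteq N^h$ is a Henselian subfield with value group $\lvZ$ and residue field $\kappa'\cong\Nw=N^hw$. In particular $N^h\md\kappa'(\pi)^h$ is an immediate extension.

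Next I would show that $N^h\md\kappa'(\pi)^h$ is algebraic by comparing transcendence degrees over $l$. On the one hand $\td(N^h\md l)=\td(N\md l)$, because the Henselization is algebraic over $N$. On the other hand
\[
\td\big(\kappa'(\pi)^h\md l\big)=\td\big(\kappa'(\pi)\md l\big)=\td(\kappa'\md l)+1=\td(\Nw\md l)+1=\td(N\md l),
\]
using $\kappa'\cong\Nw$ over $l$, the transcendence of $\pi$ over $\kappa'$, and the prime-divisor relation. Since $\kappa'(\pi)^h\subseteq N^h$ and both have the same finite transcendence degree over $l$, the extension $N^h\md\kappa'(\pi)^h$ is algebraic.

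Finally, $N^h$ is an immediate algebraic extension of the Henselian valued field $\kappa'(\pi)^h$, and such an extension is trivial: in residue characteristic zero there is no defect (this is the no-defect statement recalled above), so every finite subextension $M\md\kappa'(\pi)^h$ inside $N^h$ satisfies $[M:\kappa'(\pi)^h]=e\cdot f=1$, whence $M=\kappa'(\pi)^h$; as $N^h$ is the union of such $M$, we conclude $N^h=\kappa'(\pi)^h$. The only delicate point, and the place where the hypotheses really enter, is this last step: one needs to know that $w$ carries no (transcendence) defect. This is automatic when $\chr(\Nw)=0$, and in the situations of interest it also follows from the fact that $w$ is the restriction of a valuation without defect, via the preservation of the no-defect property under passage to subfields recalled above.
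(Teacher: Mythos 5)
Your argument is correct and follows essentially the same route as the paper: exhibit $\tilde N:=\kappa'(\pi)^h$ as a Henselian subfield of $N^h$ with $w\tilde N=wN^h$ and $\tilde Nw=N^hw$, then use the absence of defect (fundamental equality) to force $[N^h:\tilde N]=e\cdot f=1$. Your additional transcendence-degree computation making the algebraicity of $N^h\md\kappa'(\pi)^h$ explicit, and the reduction to finite subextensions, merely fill in details the paper leaves implicit.
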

\begin{proof} The Henselian subfield 
$\tilde N:=\kappa'(\pi)^h$ of $N^h$ satisfies 
$\tilde Nw=N^hw$ and $w\tilde N=wN$. 
Since $w$ has no defect, the fundamental 
equality holds. Hence $[N^h\!:\!\tilde N]=
e(N^h|\tilde N)f(N^h|\tilde N)=1$, thus finally 
implying $N^h=\tilde N=\kappa'(\pi)^h\!$.
\end{proof}
\vskip0pt
\noindent
\subsubsection{Proof of assertion {\rm 1)} 
of the Key Lemma \ref{thekeylmm}}$\ha0$
\vskip10pt
Let $\Us y\in Y(\Us k)$ be defined by 
a point $y\in Y$ and a $k$-embedding 
$\Us\jmath:L_y\hra\Us k$. By Fact~\ref{fkt7} 
above, $w:=v_y:=\Us v|_{L_y}\in\clD(L_y|k)$ 
is either trivial or a prime divisor of $L_y|k$, 
and let $\jmath:l_y\hra\Uj\kappa$ be the 
corresponding $k$-embedding of the residue 
fields. Since $f$ is $\Sgm k$-pseudo-split,
there is $x\in X_y$ and $v:=v_x\in\clD(K_x|k)$ 
on $K_x=\kappa(x)$ such that $w=v|_{L_y}$,
the residue field embedding $k_x\md l_y$ is 
\jspl, and $e(v|w)=1$ if $w$ is 
non-trivial. Hence by definitions, $k_x\md l_y$ is 
separably generated, and $\jmath:l_y\hra\Uj\kappa$
has a prolongation $\imath:k_x\hra\Uj\kappa$. Let 
$\clT_0$ be a separable transcendence basis of $k_x$ 
over $l_y$, and $\clT\subset K_x$ be a preimage of
$\clT_0$ under the canonical residue field
projection $\clO_v\to K_xv_x$. One has the
following:
\vskip2pt
\itm{15}{
\item[-] Setting $N:=L_y$ and $M:=K_x$, one has 
$\Nw=l_y$, $k_x=\Mv$, and further: $\clT_0$ is a 
separable transcendence basis of $\Mv$ over 
$\Nw$, and $\clT\subset M$ is a preimage of 
$\clT_0$ under $\clO_v\to\Mv$.
\vskip2pt
\item[-] Set $N_{\clT}:=N(\clT)\subset M$. Since $w=v|_N$, 
it follows by mere definition that $w_\clT:=v|_{N_\clT}$ 
is the Gauss valuation of $N_\clT$ defined by $w$ 
and $\clT\!$.
\vskip2pt
\item[-] Setting $\kappa_N:=\jmath(\Nw)\hra
\imath(\Mv)=:\kappa_M$, it follows that 
$\imath(\clT_0)$ is a separable transcendence 
basis of $\kappa_M$ over $\kappa_N$. 
\vskip2pt
\item[-] Setting $\Us N:=\Us\jmath(N)\subset\Us k$,
let $\Us\clT\subset\Us k$ be a preimage 
of $\imath(\clT_0)$ under $\Us\clO\to\Uj\kappa$, 
and set $N_{\Us\clT}:=\Us N(\Us\clT)$. Then 
the restriction $w_{\Us\clT}$ of $\Us v$ to 
$N_{\Us\clT}$ is the Gauss valuation of $\Us N$ 
defined by $\Us w=\Us v|_{\Us N}$ and 
$\Us\clT$. Hence one has 
a $k$-isomorphism of valued fields
\[ 
\jmath^\nix_{\Us\clT}:N_\clT\to N_{\Us\clT}\subset\Us k.
\]
\item[-] Let $\Us N^h\subset N_{\Us\clT}^h
\subset\Us k$ be the Henselizations of 
$\Us N\subset N_{\Us\clT}$ in $\Us k$. Then since 
$\kappa_M$ is finite separable over the residue field 
$N_{\Us\clT}w_{\Us\clT}=\kappa_N\big(\imath(\clT_0)\big)$, 
one has: There exists a unique algebraic unramified 
subextension $\Us M^0\md N_{\Us\clT}^h$ of 
$\Us k\md N_{\Us\clT}^h$ with residue field 
$\Us M^0\Us v=\kappa_M$.   
}
\vskip2pt
Finally, one has the following case-by-case discussion:
\vskip5pt
\underbar{Case 1}. $v$ is trivial. Then $w$ is 
trivial, hence $N=\Nw\hra\Mv=M$, and 
$\tilde y\in Y(\Us k)$ is defined by the $k$-embedding
$\Us\jmath:\kappa(y)=N\to\Us N\subset\Us k$.  
In particular, in the above notation, the valuations
$w_\clT$ and $w_{\Us\clT}$ are trivial, thus
$N=N^h\hra N_{\Us\clT}^h=N_{\Us\clT}$, and 
$\Us M^0\md N_{\Us\clT}$ is a finite separable 
extension of $N_{\Us\clT}$ such that the residue
map $\Us\clO\to\Uj\kappa$ defines an
isomorphism $\Us M^0\to\kappa_M$. Hence if 
$\imath_0:\kappa_M\to\Us M^0$ is the inverse 
of the isomorphism $\Us M^0\to\kappa_M$, 
one has:
\[
\Us\imath:M\hor{\imath}\kappa_M\hor{\imath_0} 
\Us M^0\subset\Us k
\]
is an isomorphism prolonging $\Us\jmath:N\to\Us k$,
thus defining $\tilde x\in X(\Us k)$ such that 
$f^{\Us k}(\tilde x)=\tilde y$.
\vskip5pt
\underbar{Case 2}. $v$ is non-trivial and $w$ is trivial,
hence $N=\Nw$. Then we can view $v$ as a prime 
divisor of $M\md N$, and in the above notation 
one has: Let $\clT\subset M$ be a preimage of a 
separable transcendence basis $\clT_0\subset\Mv$ 
of $\Mv\md N$, and $N_\clT=N(\clT)$. Then 
$w_\clT:=v|_{N_\clT}$ is trivial, and the 
relative algebraic closure $M^0$ of $N(\clT)$ 
in $M^h$ is a field of representatives for $\Mv$. 
In particular, if $\pi\in M$ has $v(\pi)=1$, then 
$M^h=M^0(\pi)^h$ by Fact~\ref{fkt8}. 
\vskip2pt
Next, let $\Us\clT\subset\Us k$ be a preimage
of $\imath(\clT_0)\subset\Uj\kappa$ under the
canonical residue map $\Us\clO\to\Uj\kappa$.
Then $\Us v$ is trivial on $N_{\Us\clT}=\Us N(\Us\clT)$,
and $\kappa_M=\imath(\Mv)$ has a unique preimage
$\Us M^0\subset \Us k$ which is algebraic over
$N_{\Us\clT}$. Finally, the $k$-isomorphism 
$M_0\to\Mv\to\Us M^0$ together with $\pi\mapsto\Uj\pi$ 
give rise to a $k$-embeddings of fields 
\[
\Us\imath:M\hra M^h=M^0(\pi)^h\hor{\cong}
\Us M^0(\Us\pi)^h\subset\Us k,
\] 
whose restriction to $N=\Nw$ is $\Us\jmath$. Hence 
the $\Us k$-rational point $\tilde x\in X(\Us k)$ 
defined by $\Us\imath:M\hra\Us k$ satisfies 
$f^{\Us k}(\tilde x)=\tilde y$.
\vskip5pt
\underbar{Case 3}. $w$ is non-trivial. Let 
$\pi\in N$ be such that $w(\pi)=1$, hence 
$v(\pi)=1$ by the fact that $e(v|w)=1$. Then 
$N_\clT=N(\clT)\hra M$ gives rise to the 
embedding of the Henselizations
$M^h\md N_\clT^h$. Reasoning as above, the 
unique unramified subextension $M_0\md N_\clT^h$
of $M^h\md N_\clT^h$ satisfies $M^h=M_0$, and
$N_\clT\to N_{\Us\clT}$ together with $\pi\mapsto\Us\pi$,
gives rise to a $k$-embedding $\Us\imath:M\to\Us k$
prolonging $\Us\jmath:N\to k$, etc. Hence finally,
one gets a point $\tilde x\in X(\Us k)$ such that
$f^{\Us k}(\tilde x)=\tilde y$.
\vskip5pt
\noindent
\subsubsection{Proof of assertion {\rm 2)} 
of the Key Lemma \ref{thekeylmm}} $\ha0$
\vskip2pt
Since the implication $\Rightarrow$ is actually
assertion~1) of the Key lemma, it is left to
prove the converse implication, that is, that
property \Srj$_{\Sgm k}$ implies the hypothesis
\Bstar$_{\Sgm k}$.
In Notations/Remarks~\ref{notarem2}, 
suppose that $f^{\Us k}:X(\Us k)\to Y(\Us k)$ 
is surjective for a given $\clU$. Let 
$y\in Y$, $N:=L_y=\kappa(y)$, 
and $w:=w_y\in\clD(N|k)$, and a 
$k$-embedding $\jmath:\Nw=l_y\hra\Uj\kappa$
be given. We show that there is $x\in X_y$
such that setting $M:=\kappa(x)$ there is
$v\in\clD(M|k)$ such that $w=v|_N$, 
$e(v|w)=1$, and $\Nw=l_y\hra k_x=\Mv$ is \jspl.
\vskip2pt
Indeed, given $w$, we define a particular 
$\Us k$-rational point $\tilde y=\tilde y_w\in Y(\Us k)$
as follows: First, if $w$ is trivial, let 
$\tilde y_w$ be defined by the $k$-embedding
$\jmath:N=\kappa(y)\hra\Uj\kappa\subset\Us k$.
Second, if $w$ is non-trivial, hence a prime 
divisor of $N|k$, let $\kappa_w\subset N^h$
be a field of representatives for $\Nw$. (Note
that since $\chr(k)=0$, such a field of representatives 
exists.) Thus by Fact~\ref{fkt8}, one has
$N^h=\kappa_w(\pi)^h\!$. Hence setting 
$\kappa'_w=\jmath(\Nw)\subset\Uj\kappa\subset\Us k$,
one has that $N^h$ has a canonical $k$-embedding 
$\Us\jmath^h:N^h=\kappa_w(\pi)^h\to
\kappa'_w(\Us\pi)^h\subset\Us k$ via 
$\jmath:\kappa_w\to\Nw\to\kappa'_w\subset\Uj\kappa$, 
$\pi\mapsto\Uj\pi$. 
\vskip2pt
Let $\tilde y\in Y(\Us k)$ be defined by the 
$k$-embedding $\Us\jmath:=\Us\jmath^h|_N:
N\hra N^h\hra\Us k$. Then
by property \Srj$_{\Sgm k}$, there is some
$\tilde x\in X(\Us k)$ such that
$f^{\Us k}(\tilde x)=\tilde y$, and let $\tilde x$
be defined by some $x\in X$ and a $k$-embedding
$\Us\imath:M=\kappa(x)\hra\Us k$.
Then by mere definition one has $f(x)=y$, and
the canonical $k$-embedding 
$f_{xy}:N=\kappa(y)\hra\kappa(x)=M$ satisfies
$\Us\imath\circ f_{xy}=\Us\jmath$. Hence
setting $v:=\Us v|_M$, one has $w=v|_N$,
and the following hold: First, one has a canonical 
$k$-embedding $\Nw\hra\Mv\hra\Uj\kappa$.
Second, one has canonical embeddings 
$wN\hra vM\hra\Us v\Us k$; and if
$w$ is non-trivial, then by the definition 
of $w$ one has: $w(\pi)=1=\Us v(\Uj\pi)$,
hence $wN\hra vM\hra\Us v\Us k$ are 
isomorphisms, and $e(v|w)=1$. Finally, since 
$\jmath:\Nw\hra\Uj\kappa$ prolongs to a 
$k$-embedding $\Mv\hra\Uj\kappa$, it follows 
that $\Mv|\Nw$ is \jspl.
%
%
\subsection{Final Remarks} $\ha0$
\vskip2pt
\noindent
First, it is believed that the hypothesis 
\defi{(qAKE)}$_{\Sgm k}$ always holds, in 
particular, assertion~1) of Theorem~\ref{thm2} 
should hold unconditionally. Second, the 
question whether assertion~2) of 
Theorem~\ref{thm2} holds in positive 
characteristic, is related to subtle questions 
concerning the relationship between 
ramification index and purely inseparable 
non-liftable extensions of the residue field of 
prime divisors. Hence it is an interesting (and 
maybe subtle) question whether assertion~2) 
of Theorem~\ref{thm2} holds (un)conditionally 
in positive characteristic.  
\section{Proof of Theorem~\ref{thm0}}
By mere definitions, Theorem~\ref{thm0} 
is a consequence of Theorem~\ref{thm1} 
and Theorem~\ref{thm3} below. The latter 
relates the pseudo-splitness of prime divisors in 
extensions of function fields and pseudo-splitness 
of morphisms of proper integral v.r.l.\ varieties
over arbitrary fields $k$. We say that a function 
field $N|k$ is \defi{valuation-regular-like}, 
if $N|k$ has a co-final system of {\it proper 
\vrl\/}\ha3models~$(Z_\alpha)_\alpha$. 
By Hironaka's Desingularization Theorem, one has:
\vskip5pt
\centerline{\it If $\,\chr(k)=0$, every function 
field $N\md k$ is v.r.l.\/}
\begin{theorem}
\label{thm3}
Let $f:X\to Y$ be a dominant morphism of proper
\vrl\ $k$-varieties, and suppose that $K=k(X)$, 
$L=k(Y)$ are \vrl.
Then $f:X\to Y$ is pseudo-split in the sense 
defined in the Introduction iff $\,\clD(L|k)$ 
is pseudo-split in $\clD(K|k)$.
\end{theorem}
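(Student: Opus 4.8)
\emph{The plan} is to interpret both sides in terms of a cofinal system of proper normal \vrl\ models, to insert a valuation‑theoretic notion of pseudo‑splitness between them, and then to reduce everything to prime divisors by a d\'evissage on the length of valuation towers.

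\emph{Setup.} Since $K|k$ and $L|k$ are \vrl, I would fix a cofinal family $(f_\alpha\colon X_\alpha\to Y_\alpha)_\alpha$ of dominant morphisms of proper \emph{normal} \vrl\ models refining $f$. On such a model a prime divisor $v$ of $L|k$ is, by definition, the valuation attached to a Weil prime divisor $E$ of some $Y_\alpha$, whence $Lv=\kappa(\eta_E)=k(E)$; and if $w\in\clD(K|k)$ lies over $v$, given by a Weil prime divisor $D$ of $X_\alpha$ with $f_\alpha(D)=E$, then $e(w|v)$ is the multiplicity of $D$ in $f_\alpha^{*}(E)$ and $Kw|Lv$ is the extension $k(D)|k(E)$. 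Hence, by the \textbf{Fact} of the Introduction, ``$\clD(L|k)$ is pseudo‑split in $\clD(K|k)$'' is equivalent to: on every $Y_\alpha$ every Weil prime divisor is pseudo‑split under $f_\alpha$ in the divisor‑theoretic sense. Since the $X_\alpha,Y_\alpha$ are proper, every $k$‑valuation of $L$ (resp.\ of $K$) has a unique center on each of them, and — using \vrl ness of $X$, $Y$ and of the residue function fields appearing below — the Introduction‑pseudo‑splitness of $f$ is seen to be unchanged on passing between proper \vrl\ models. I may therefore assume $X=X_\alpha$, $Y=Y_\alpha$ are themselves normal \vrl.

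\emph{A valuation‑theoretic reformulation.} Call a $k$‑valuation $\euw$ of $L$ a \emph{tower} if it is trivial or a finite composite $\euw_n\circ\dots\circ\euw_1$ in which $\euw_1$ is a prime divisor of $L|k$, $\euw_2$ a prime divisor of $L\euw_1|k$, and so on; write $L\euw$ for its residue field. Say that \emph{$\Val k L$ is pseudo‑split in $\Val k K$} if for every tower $\euw$ and every $\sigma\in G_{L\euw}$ there is a tower $\euv$ of $K$ over $\euw$, with $e$ equal to $1$ at the topmost level, such that $K\euv\otimes_{L\euw}\oli{L\euw}$ has a field factor stabilized by $\sigma$. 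The first reduction is that this holds iff $f$ is pseudo‑split. Indeed, given $y\in Y$, $w_y\in\clD(L_y|k)$ and $\sigma\in G_{l_y}$, \vrl ness of the point $y$ furnishes $\tl v\in\Val k L$ with center $y$ and $L\tl v=\kappa(y)=L_y$, itself a tower (its successive levels being degree valuations), so $\euw:=w_y\circ\tl v$ is a tower with $L\euw=L_yw_y=l_y$; conversely, after replacing $X,Y$ by a dominating proper \vrl\ model one may assume the center of a given tower $\euw$ has codimension equal to the number of its ``geometric'' levels, and then $\euw=w_y\circ\tl v$ with $w_y\in\clD(\kappa(y)|k)$. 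A tower $\euv$ of $K$ over $\euw$, unramified at the top, has center $x\in X$ lying over $y$, so $x\in X_y$; \vrl ness of $x$ together with the fact that $f$ is a morphism of \vrl\ varieties let one arrange $\euv=v_x\circ\tl v_x$ with $\tl v_x|_L=\tl v$ and $v_x\in\clD(K_x|k)$ satisfying $v_x|_{L_y}=w_y$ and $e(v_x|w_y)=1$. Since then $K\euv=k_x$ and $L\euw=l_y$, the tensor products $K\euv\otimes_{L\euw}\oli{L\euw}$ and $k_x\otimes_{l_y}\oli{l_y}$, together with their $\sigma$‑stable factors, coincide, and running the argument in both directions gives the equivalence.

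\emph{The d\'evissage, and the main obstacle.} The second reduction is that $\Val k L$ is pseudo‑split in $\Val k K$ iff $\clD(L|k)$ is pseudo‑split in $\clD(K|k)$; here one direction is trivial (prime divisors are the towers of length $\le 1$), and the other I would prove by induction on the length $n$ of $\euw$, the case $n\le 1$ being the hypothesis. For $\euw=\euw_n\circ\euw^-$ with $\euw^-$ of length $n-1$, residue field $L^-:=L\euw^-$, and $\euw_n$ a prime divisor of $L^-|k$ with residue field $l:=L\euw$, one lifts $\sigma\in G_l$ through the decomposition group of an extension of $\euw_n$ to $\oli{L^-}$ to obtain $\sigma^-\in G_{L^-}$; the inductive hypothesis applied to $(\euw^-,\sigma^-)$ yields a tower $\euv^-$ of $K$ over $\euw^-$, unramified at the top, with a $\sigma^-$‑stable field factor of $K\euv^-\otimes_{L^-}\oli{L^-}$; one then extends $\euw_n$ to a prime divisor $\euv_n$ of $K\euv^-$ with ramification index $1$ and regular residue extension and sets $\euv:=\euv_n\circ\euv^-$, the transitivity of splitness recalled in section~3 (the hypothesis $N'=M'\cap\oli N$ at each junction holding because all the residue extensions in sight are regular and all the relevant ramification indices are $1$) producing the required $\sigma$‑stable factor of $K\euv\otimes_l\oli l$. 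Combining the two reductions proves the theorem. I expect the delicate points to be precisely (i) the point‑to‑tower matching — arranging that a tower $\euv$ of $K$ over $\euw=w_y\circ\tl v$ factors as $v_x\circ\tl v_x$ with $v_x\in\clD(\kappa(x)|k)$ and $\tl v_x|_L=\tl v$, which genuinely uses that $f$ is a morphism of \vrl\ varieties (so the \vrl\ valuations on $X$ and $Y$ may be chosen compatibly) and not merely that $K,L$ are \vrl\ function fields; and (ii) the bookkeeping in the d\'evissage, where one must carry the ramification indices and the decomposition‑group data up the tower so that a field factor stabilized by a lift of $\sigma$ at one residue level maps to a factor stabilized by $\sigma$ one level higher.
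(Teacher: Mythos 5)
Your overall architecture (pass to a cofinal system of proper normal \vrl\ models, reformulate both notions valuation-theoretically, then shuttle between prime divisors of $L|k$ and the data at points $y\in Y$) is close in spirit to the paper's treatment of the forward implication, but the d\'evissage you propose for the converse has a genuine gap. In your inductive step you must extend the prime divisor $\euw_n$ of the residue field $L^-=L\euw^-$ to a prime divisor $\euv_n$ of $K\euv^-$ with $e=1$, a regular residue extension, and a $\sigma$-stable factor. The hypothesis of the theorem --- that $\clD(L|k)$ is pseudo-split in $\clD(K|k)$ --- is a statement about prime divisors of $L$ itself; it says nothing about how prime divisors of a residue field $L^-$ extend in $K\euv^-\md L^-$. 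What you invoke at that junction is precisely the pseudo-splitness of $\clD(L^-|k)$ in $\clD(K\euv^-|k)$, i.e.\ an instance of the conclusion one level down the tower, which is not available. The induction therefore does not close; and the case that matters most (the top level, where $\euw_n=w_y$ and $K\euv^-=\kappa(x)$) is exactly the statement to be proved. A secondary issue: the \vrl\ valuation $\tlv$ with $L\tlv=\kappa(y)$ is, for a general v.r.l.\ point, not required to be a composite of prime divisors, so it need not be a ``tower'' in your sense at all.

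The paper avoids descending into residue fields of towers altogether. For the converse it forms the composite $w=w_y\circ w_L$ ($w_L$ the v.r.l.\ valuation at $y$ with $Lw_L=\kappa(y)$), takes its centers $y_\alpha$ on the cofinal system, and uses v.r.l.-ness at each $y_\alpha$ to manufacture genuine prime divisors $w_\alpha\in\clD(L|k)$ with $w_\alpha(\pi)=1$ and $\kappa(y_\alpha)\hra Lw_\alpha$ regular; the hypothesis then yields $v_\alpha\in\clD(K|k)$ over $w_\alpha$ with $e(v_\alpha|w_\alpha)=1$ and the splitness. The step your proposal is missing is the limit argument that converts this family back into a single $v_x\in\clD(K_x|k)$: the paper takes the ultraproduct $\prod_\alpha\clO_{v_\alpha}/\,\clU$ inside an ultrapower of $K$, restricts the resulting valuation to $K$, inverts $\pi$ to obtain a coarsening $v_K$ whose center is a point $x\in X_y$, and reads off $v_x$ together with $e(v_x|w_y)=1$ and the $l'$-splitness of $k_x\md l_y$ from the quotient $\clO_v/\eum_{v_K}$ and the transitivity of splitness. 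Some such limit or compactness device seems unavoidable, since the $v_\alpha$ live on different models with a priori incompatible centers; nothing in your proposal plays this role.
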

\begin{proof}
Since $K\md L$ is an extension of \vrl\
function fields over $k$, there are cofinal systems 
$(f_\alpha:X_\alpha\to Y_\alpha)_{\alpha\in I}$
of dominant morphisms of proper \vrl\ 
$k$-varieties defining $K\md L$. In particular, the 
structure morphisms $X_{\alpha''}\to X_{\alpha'}$
and $Y_{\alpha''}\to Y_{\alpha'}$, 
$\alpha'\leqslant\alpha''$ are proper. 
Further, w.l.o.g., we can and will replace the 
given projective system $(f_\alpha)_{\alpha\in I}$ 
by any subsytem $(f_{\alpha'})_{\alpha'\in I'}$ 
indexed by any co-final segment $I'\subset I$. 
In particular, w.l.o.g., we can and \underbar{will} 
suppose that every $f_\alpha:X_\alpha\to Y_\alpha$ 
dominates the given $f:X\to Y\!$. 
\vskip2pt
We conclude this preparation by summarizing a few well 
known facts, to be used later.
%
%
\begin{fact}
\label{fkt5}
{\it Let $v\in\Val k N$ have center 
$x_\alpha\in X_\alpha$ for $\alpha\in I$. 
Setting $w:=v|_L$, one has:
\vskip2pt
\itm{35}{
\item[{\rm1)}] The center of $w$ on $Y_\alpha$
is $y_\alpha=f_\alpha(x_\alpha)$, and one has: 
\[
\eum_v=\cup_\alpha\eum_{x_\alpha}
\subset\,\cup_\alpha\clO_{x_\alpha}\!=\clO_v,
\quad \eum_w=\cup_\alpha\eum_{y_\alpha}
\subset\,\cup_\alpha\clO_{y_\alpha}\!=\clO_w,
\]
and therefore, 
$\Lw=\cup_\alpha\kappa(y_\alpha)
\hra\cup_\alpha\kappa(x_\alpha)=\Kv$
canonically.
\vskip3pt
\item[{\rm2)}] If $v\in\clD(K|k)$, then
$\clO_v=\clO_{x_\alpha}$ and 
$\clO_w=\clO_{y_\alpha}$ for 
$\alpha\in I_v$ in a cofinal segment 
$I_v\subset I$.  
}
}
\end{fact}
\subsection{The implication ``$\,\Rightarrow\,$''}$\ha0$
\vskip5pt
Given $w\in\clD(L|k)$, we show that $w$ is
pseudo-split in $\clD(K|k)$.
\vskip5pt
\underbar{Case 1}. $w$ is the trivial valuation
of $L|k$. Then the center $y\in Y$ of $w$ is the 
generic point $y=\eta_Y$ of $Y\!$, and $X_y=X_L$
is the generic fiber of $f:X\to Y\!$. Further,
$\Lw=L$. Since $w$ is pseudo-split under~$f$, 
for every co-procyclic extension $l'\md L$ there 
exists $x\in X_L$ and $v_x\in\clD(K_x|k)$ 
with $k_x\md L$ split above $l'$. Since $K|k$ 
is valuation-regular-like, there exists $\tlv\in\Val k K$ 
having center $x\in X_L\subset X$ such that 
$K\hb1\tlv=K_x$. The valuation theoretical 
composition $v:=v_x\circ\tlv$ is trivial on 
$L$ under $L\hra K$, hence $w=v|_L$, and 
$k_x=K_xv_x=\Kv$. Further, by Fact~\ref{fkt5}, 
if $x_\alpha\in X_\alpha$ is the center of $v$ 
on $X_\alpha$, one has 
$\clO_v=\cup_\alpha\clO_{x_\alpha}$,  
$\eum_v=\cup_\alpha\eum_{x_\alpha}$, and 
$k_x=\Kv=\cup_\alpha\kappa(x_\alpha)$.
In particular, since $k_x\md k$ is finitely 
generated, there exists a cofinal segment
$I_x\subset I$ such that $k_x=\Kv=\kappa(x_\alpha)$
for all $\alpha\in I_x$. Since $K|k$ is 
valuation-regular-like, for every $x_\alpha\in X_\alpha$, 
there exists $v_\alpha\in\clD(K|k)$ with 
center $x_\alpha\in X_\alpha$ such that 
$\Kv_\alpha\md\kappa(x_\alpha)$ is a regular 
field extension. In particular, for $\alpha\in I_x$
one has: $k_x=\kappa(x_\alpha)$ and 
$k_x=\kappa(x_\alpha)\hra\Kv_\alpha$ is a
regular field extension. Hence since $k_x\md L$
is split above $l'\!$, and $\Kv_\alpha\md k_x$
is a regular extension, by transitivity of splitness,
it follows that $\Kv_\alpha\md L$
is split above $l'\!$. Finally, since $v_\alpha|_L$ 
is trivial, hence $w=v_\alpha|_L$, it follows that 
$w$ is pseudo-split in $\clD(K|k)$, as claimed.
\vskip5pt
\underbar{Case 2}. $w$ is non-trivial, hence 
$w\in\clD(L|k)$ is a prime divisor prime 
divisor of $L|k$. Let $y_\alpha\in Y_\alpha$ be 
the center of $w$ on $Y_\alpha$. By Fact~\ref{fkt5}, 
there is a  co-final segment $I_w\subset I$ such that 
$\clO_w=\clO_{y_\alpha}$, thus $\eum_w=
\eum_{y_\alpha}$ and $\Lw=\kappa(y_\alpha)$
for $\alpha\in I_w$. Letting $y=\eta_Y$ be 
the generic point of $Y$, one has $L_y=L$, 
and $w\in\clD(L_y)$, and $X_y=X_L$ is the 
generic fiber of $f:X\to Y\!$.
\vskip2pt
Let $l'\md\Lw$ be a co-procyclic extension. Then
$w\in\clD(L_y|k)$ being split under $f$ implies 
that there is $x\in X_y=X_L$ and a prime 
divisor $v_x\in\clD(K_x|k)$ with $w=v_x|_L$ 
under $L\hra K_x$ such that $e(v_x|w)=1$ and 
$k_x\md\Lw$ is split above $l'\!$. Let $\pi\in L$ 
satisfy $w(\pi)=1$, hence in particular, $v_x(\pi)=1$ 
under the $k$-embedding $L=L_y\hra K_x$. 
Since $K|k$ is valuation-regular-like, there is 
$\tlv\in\Val I K$ with center $x\in X$ and 
$K\tilde v=\kappa(x)=K_x$. In particular, 
$\tlv|_L$ is trivial on $L$ under $L\hra K$,
and the valuation theoretical composition 
$v:=v_x\circ\tlv\in\Val k K$ satisfies:
\vskip2pt
\itm{25}{
\item[a)] $\Kv=K_xv_x=k_x$, and $w=v|_L$ 
under $L\hra K$, thus $\clO_w=\clO_v\cap L$.
\vskip2pt
\item[b)] Since $wL=v_xK_x\hra vK$, it follows that 
$v(\pi)$ is the minimal positive element of $vK$.
\vskip2pt
\item[c)] In particular, $\eum_v=\pi\clO_v$, hence 
$\pi\in\eum_v\backslash\ha1\eum_v^2$.
}
Recalling that $f_\alpha:X_\alpha\to Y_\alpha$
are proper morphisms, since $w=v|_L$ has
the center $y_\alpha\in Y_\alpha$, it follows that
$v$ has a (unique) center $x_\alpha\in X_\alpha$, 
and $f(x_\alpha)=y_\alpha$. In particular, since 
$w=v|_L$, by Fact~\ref{fkt5} one has: First, 
since $k_x=\Kv$ is finitely generated over $k$, 
there is a cofinal segment $I_x\subset I$ such 
that $\Kv=k_x=\kappa(x_\alpha)$ for all 
$\alpha\in I_x$. Recalling that $I_w\subset I$ is a 
cofinal segment such that $\Lw=\kappa(y_\alpha)$ 
for all $\alpha\in I_w$, it follows that $I':=I_w\cap I_x$ 
is a cofinal segment in $I$ such that for all $\alpha\in I'$ 
the following hold: 
\[
\clO_w=\clO_{y_\alpha}=\clO_{x_\alpha}\cap L,
\quad\eum_w=\eum_{y_\alpha}=\eum_{x_\alpha}\cap L,
\quad\Lw=\kappa(y_\alpha)\hra\kappa(x_\alpha)=k_x.
\]
In particular, $\pi\in\eum_{x_\alpha}$, and since 
$\pi\not\in\eum_v^2$, one has that $\pi\not\in
\eum_{x_\alpha}^2$ for all $\alpha\in I'\!$.
\vskip2pt
Since $K|k$ is valuation regular-like, there exists 
$v_\alpha\in\clD(K|k)$ with center 
$x_\alpha\in X_\alpha$ such that 
$\Kv_\alpha\md\kappa(x_\alpha)$ is a regular
field extension, 
and $v_\alpha(\pi)=1$, because $\pi\in\eum_{x_\alpha}
\backslash\eum_{x_\alpha}^2$. Therefore
$w_\alpha:=v_\alpha|_L$ lies in $\clD(L|k)$, and 
$\eum_{w_\alpha}=\eum_{v_\alpha}\cap L$. 
Since  $\eum_{y_\alpha}=
\eum_{x_\alpha}\cap L$, one has:
\[
\eum_w=\eum_{y_\alpha}=\eum_{x_\alpha}\cap L
\ \subset \ \eum_{v_\alpha}\cap L=\eum_{w_\alpha},
\ \ \hbox{ hence } \ \ \clO_w\supset\clO_{w_\alpha}.
\]
Since $w$, $w_\alpha$ are discrete valuations, 
one must have $w=w_\alpha$. Recalling that 
$w_\alpha:=v_\alpha|_L$, we finally get 
$w=v_\alpha|_L$, hence $e(v_\alpha|w)=1$, because 
$v_\alpha(\pi)=1=w_\alpha(\pi)$. Since $k_x\md\Lw$ 
is $l'$-split and $k_x=\kappa(x_\alpha)\hra\Kv_\alpha$ 
is a regular field extension for $\alpha\in I'\!$, it 
follows that $\Kv_\alpha\md \Lw$ is split above $l'$. 
Conclude that $w$ is pseudo-split in $\clD(K|k)$.
\subsection{The implication ``$\,\Leftarrow\,$''}$\ha0$
\vskip5pt
Setting $L_y:=\kappa(y)$ for $y\in Y\!$, we have to 
show that every $w_y\in\clD(L_y|k)$ is pseudo-split 
under~$f$ in the sense defined in the Introduction. 
First, if $y=\eta_Y$ is the generic point of $Y\!$, hence 
$L_y=k(Y)=:L$, then the implication follows directly 
from the fact that $\clD(L|k)$ is pseudo-split in 
$\clD(K|k)$. Hence w.l.o.g., $y\neq\eta_Y$.
\vskip5pt
\underbar{Case 1}. $w_y$ is the trivial valuation 
of $L_y$, i.e., $L_y=\Lw_y=l_y$. First, since 
$L|k$ is regular-like, there exists $w\in\clD(L|k)$ 
having center $y\in Y$ such that $\Lw\md l_y$
is a regular field extension. Let $l'\md l_y$ be a 
co-procyclic extension, and $l'_w\md\Lw$ be a
co-procyclic extension with $l'=\oli{\,l\,}\!_y\cap l'_w$.
Since $\clD(L|k)$ is pseudo-split in $\clD(K|k)$, 
there is $v\in\clD(K|k)$ such that $e(v|w)=1$ and 
$\Kv\md\Lw$ is split above $l'_w$. Hence if 
$x\in X$ is the center of $v$ on $X$, then 
$y=f(x)$ is the center of $w$ on $Y\!$,   
$x$ lies in the fiber $x\in X_y$ of $f$ at $y$, 
and there are canonical $k$-embeddings
\[
L_y\hra K_x=\kappa(x)\hra\Kv\,.
\]
In particular, since $\Kv\md\Lw$ is split above 
$l'_w$ and $l'=\oli{\,l\,}\!_y\cap l'_w$, and
$\Lw\md L_y$ is a regular field extension, by 
the transitivity of splitness, it follows that 
$\Kv\md L_y$ is split above $l'$. Hence finally,
since $K_x\md L_y$ is a subextension of $\Kv\md L_y$, 
it follows that $K_x\md L_y$ is split above $l'$. 
Hence letting $v_x$ be the trivial valuation of 
$K_x$, it follows that $e(v_x|w_y)=1$, and 
$l_y=L_y\hra K_x=k_x$ is split above $l'\!$,
as claimed.
\vskip5pt
\underbar{Case 2}. $w_y\in\clD(L_y|k)$ is 
non-trivial. Since $L|k$ is valuation-regular-like, 
there exists $w_L\in\clD(L|k)$ having center 
$y$ on $Y$ and $\Lw_L=L_y$. Next let 
$w:=w_y\circ w_L$ be the valuation 
theoretical composition of $w_y$ and $w_L$, 
hence $Lw=L_yw_y=l_y$, and 
$wL=w_yL_y\times w_LL$ lexicographically 
ordered. In particular, if $\pi\in\clO_{w_L}$ 
is any element whose image in $L_y$ is a  
uniformizing parameter of $w_y$, then 
$1_y=w(\pi)\in wL$ is the unique minimal 
positive element, and $\eum_w=\pi\clO_w$. 
Then letting $y_\alpha\in Y_\alpha$ 
be the center of $w$ on $Y_\alpha$, one has: 
$\clO_w=\cup_\alpha\clO_{y_\alpha}$,
$\eum_w=\cup_\alpha\eum_{y_\alpha}$ and 
$\eum_{y_\alpha}=\eum_w\cap\clO_{y_\alpha}$, 
thus $l_y=\Lw=\cup_\alpha\kappa(y_\alpha)$.
In particular, since $\pi\in\eum_w$ and 
$l_y\md k$ is finitely generated, there is a cofinal 
segment $I_y\subset I$ such that the following hold: 
\vskip2pt
\itm{25}{
\item[a)] $\pi\not\in\eum_{y_\alpha}$ for all 
$\alpha\in I$, and $\pi\in\eum_{y_\alpha}$ 
for all $\alpha\in I_y$.
\vskip2pt
\item[b)] $\kappa(y_\alpha)\subset l_y$ for 
all $\alpha\in I$, and $\kappa(y_\alpha)=l_y$ 
for all $\alpha\in I_y$. 
}
Finally, since $L|k$ is valuation-regular-like, 
taking into account Fact~\ref{fkt5}, there is 
$w_\alpha\in\clD(L|k)$ such that 
$\clO_{w_\alpha}$ dominates the local ring 
$\clO_{y_\alpha}$, and further: $w_\alpha(a)=1$ 
for all $a\in\eum_{y_\alpha}\hb3\backslash
\eum_{y_\alpha}^2$, and $\kappa(y_\alpha)\hra
\Lw_\alpha:=l_\alpha$ is a regular field extension.  
Therefore, for $\alpha\in I_y$ the following hold:
\vskip2pt
\itm{20}{
\item[-] $l_y=\kappa(y_\alpha)\hra l_\alpha$ 
is a regular field extension. 
\vskip2pt
\item[-] $w_\alpha(\pi)=1$, hence $\pi$ generates 
$\eum_{w_\alpha}$.
}
Next let $l'\md l_y$ be a co-procyclic extension,
and using that $l_\alpha\md l_y$ is a regular 
field extension, let $l'_\alpha\md l_\alpha$ 
be any co-procyclic extension such that 
$l'=\oli {\ha1l\ha1}\!_y\cap l'_\alpha$. Since 
$\clD(L|k)$ is pseudo-split in $\clD(K|k)$, 
there is~a~prime divisor $v_\alpha\in\clD(K|k)$ 
with $w_\alpha\!=v_\alpha|_L$ such that 
$e(v_\alpha|w_\alpha)=1$,  and setting 
$k_\alpha:=\Kv_\alpha$ one has: $k_\alpha\md l_\alpha$
is $l'_\alpha$-split. Then taking into account 
the {\it transitivity of splitness,\/} since
$l'=\oli {\ha1l\,}\!_y\cap (l'l_\alpha)=
\oli {\ha1l\ha1}\!_y\cap l_\alpha$, one 
finally gets:
\vskip2pt
\itm{20}{
\item[$\bullet$] {\it $w_\alpha(\pi)=1=v_\alpha(\pi)$ 
under $L\hra K$, and $k_\alpha\md l_\alpha$ 
is split above $l'l_\alpha$ for $\alpha\in I_y$.\/}
}
Now let $\clP_I$ be the pre-filter on $I$ 
formed by the cofinite subsets $I'\subset I_y$, 
and $\,\,\clU$ be an ultrafilter on $I$ containing 
$\clP_I$. Consider the corresponding ultrapowers 
$\Ust L\hra \Ust K$ of $L\hra K$, endowed with 
the corresponding ultraproducts of valuations rings
\[
\clO_{\Ust w}={\textstyle\prod}_\alpha
\clO_{w_\alpha}/\,\clU\hra{\textstyle\prod}_\alpha
\clO_{v_\alpha}/\,\clU=:\clO_{\Ust v},
\]
having value groups and residue fields as follows:
\[
\Ust w\Ust L=\lvZ^I/\,\clU=\Ust v\Ust K,\quad
\Ust l={\textstyle\prod}_\alpha l_\alpha/\,\clU=\Ust L\Ust w
\hra\Ust K\Ust v={\textstyle\prod}_\alpha k_\alpha/\,\clU=\Ust k.
\]
One has: First, since for every $\alpha'\!,\alpha''\in I$ 
there exists $\alpha\in I_y$ with $\clO_{y_{\alpha'}},
\clO_{y_{\alpha''}}\subset\clO_{y_\alpha}
\subset\clO_{w_\alpha}$, it follows that 
$\clO_w=\cup_\alpha\clO_{y_\alpha}\subset\clO_{\Ust w}$. 
Hence setting $w':=\Ust w|_L$, 
$v:=\Ust v|_K$, one finally has: First, $\clO_w\subset
\clO_{w'}\subset\clO_v$, where the later inclusion is 
defined via $L\hra K$. Second, since $w_\alpha(\pi)=1=v_\alpha(\pi)$,
it follows that $\Ust w(\pi)=\Ust1=\Ust v(\pi)$ is the 
minimal positive element in both 
value groups $\Ust w \Ust L=\Ust v \Ust K$.
Therefore, $\pi\in\clO_w,\clO_{w'}$ is the element of
minimal positive value, thus $\clO_w=\clO_{w'}$ by 
general valuation theory. Further, since $\pi\in\clO_{\Ust v}$
is an element of minimal positive value, it follows that
$\pi\in\clO_v$ is an element of minimal positive value
as well. Finally, recalling that $w=w_y\circ w_L$, 
hence by mere definitions, $\clO_{w_L}=\clO_w[1/\pi]$,
it follows that $\clO_{v_K}:=\clO_v[1/\pi]$ is 
a $k$-valuation ring of $K$ such that $\clO_{v_K}
\cap L=\clO_{w_L}$. In particular, since $X$ is
proper, $v_K$ has a center on $X$, say $x\in X$. 
One has:
\vskip2pt
\itm{25}{
\item[a)] Since $\clO_{w_L}\hra\clO_{v_K}$ under
$L\hra K$, one has $f(x)=y$, thus $L_y=\kappa(y)
\hra\kappa(x)=:K_x$.
\vskip2pt
\item[b)] $\clO_{w_y}\!=\clO_w/\eum_{w_L}\!\hra
\clO_v/\eum_{v_K}\!=:\!\clO_{v_x}$ are DVRs of 
$L_y\hra K_x$ with $w_y(\pi)=1=v_x(\pi)$, 
thus $\eum_{w_y}=\pi\clO_{w_y}\subset
        \pi\clO_{v_x}=\eum_{v_x}$, and 
$k_x\md l_y$ is a $k$-subextension of 
$\Ust k\md\Ust l$. 
\vskip2pt
\item[c)] Since $k_\alpha\md l_\alpha$ is split 
above $l'l_\alpha$ for all $\alpha\in I_y$, it 
follows that $\Ust k\md\Ust l$ is split above $l'\Ust l$, 
thus $k_x\md l_y$ is split above $l'\!$ by the
transitivity of splitness. 
}
Hence $e(v_x|w_y)=1$, and $k_x\md l_y$ is split 
above $l'\!$, thus completing the proof of Case~2.
\vskip7pt
\noindent
This completes the proof of Theorem~\ref{thm3}.
\end{proof} 
%
%
%
%
%
%


\begin{thebibliography}{XXXX}

%
\bibitem[Ax]{Ax} 
    Ax, J., {\it The elementary theory of finite fields,\/} 
    Annals of Math. {\bf 88} (1968), 239--271.
    
\bibitem[A-K1]{A-K1}
     Ax, J.\ and Kochen, S., {\it Diophantine problems over 
     local fields\/ {\rm I},} Amer.\ha2J.\ha2Math.\ {\bf87},
     (1965), 605--630

\bibitem[A-K2]{A-K2}
      Ax, J.\ and Kochen, S.,
     {\it Diophantine problems over local fields\/ 
     {\rm III}. Decidable fields,\/}
     Annals of Math.\ {\bf 83} (1966), 437--456.


\bibitem[BOU]{BOU} Bourbaki, N., Commutative 
      Algebra, Springer; 1st ed.\ha21989; 2nd printing 1998 edition.

\bibitem[B-S]{B-S} Bell, J.\ha2L.\hb3and Slomson, A.\ha2B.,
      Models and Ultraproducts: An Introduction; Dover Books 
      on Math.

%
%
%
%


\bibitem[Ch]{Ch} Chatzidakis, Z., {\it Notes on the 
    model theory of finite and pseudo-finite fields,\/} 
    {\bf Notes} \scalebox{.6}[1]{\bf\#\hb6\#}16.
    \vskip0pt\noindent
    See {\tt https://www.math.ens.fr/\~{}zchatzid/ }

\bibitem[CT]{CT} Colliot-Th\'el\`ene, J.-L., {\it Fibre sp\'eciale 
      des hypersurfaces de petit degr\'e,\/} CRAS {\bf346}, 1-2
      (2008), 63--65.


\bibitem[Df1]{Df1} Denef, J., \scalebox{.95}[1]{\it  Geometric 
       proofs of theorems of Ax--Kochen--Ershov,\/} 
       Amer.\ha2J.\ha2Math.\ {\bf138} (2016), 181--199.

\bibitem[Df2]{Df2} Denef, J., {\it Proof of a conjecture of 
       Colliot-Th\'el\`ene and a diophantine excision theorem,\/} 
       ANT {\bf } (2019),  See {\tt arXiv:1108.6250}.










\bibitem[F-J]{F-J} Fried, M.\ and Jarden, M., Field Arithmetic
    (third revised edition). Ergebnisse der Mathematik und 
    ihrer Grenzgebiete, 3. Folge; Springer Verlag, ISSN: 0071-1136.




   
       



\bibitem[Ku]{Ku} Kuhlmann, F.-V., {\it Elimination of 
     Ramification I: The Generalized Stability Theorem,\/} 
     Trans.\ha2AMS {\bf362} (2010), 5697--5727.


%
     
\bibitem[LSS]{LSS} Loughran, D., Skorobogatov, A.\ha2N.\
       and Smeets, A. {\it Pseudo-split fibers and arithmetic 
       surjectivity,\/} Ann. Sci. E.N.S. (to appear). 
       See {\tt arXiv:\ha31705.10740v1} [MathAG],
       17 May 2017.  
       
\bibitem[L-S]{L-S} Loughran, S.\ and Smeets, A.\ {\it Fibrations 
       with few rational points,\/} Geometric and Functional 
       Analysis {\bf26/5} (2016), 1449--1482.         


\bibitem[Mu]{Mu} Mumford, D., The red book of varieties and schemes,
     LNM 1358, 2nd edition, Springer Verlag 1999.



%











\bibitem[P-R]{P-R}
       Prestel, A. and Roquette, P.,
       Formally $p$-adic fields, LNM 1050, Springer Verlag 1985.
  



\bibitem[Se1]{Se1} Serre, J.-P., {\it Zeta and L-functions,\/} in: Arithmetical
    Algebraic Geometry, Proc. Conf. Purdue 1963 (1965), 82-92.

\bibitem[Se2]{Se2} Serre, J.-P., {\it Lectures on $NX(p)$,\/} 
     CRC Res.\ Notes in Math.\ {\bf 11}, CRC Press, Boca 
     Raton, FL, 2012.

\bibitem[Sk]{Sk} Skorobogatov,\ha2A., \scalebox{.9}[1]{\it Descent 
     on fibrations over the projective line,\/}\ha3Amer.\ha2J.\ha2Math.%
\ha2{\bf118/5}\ha2(1996),\ha2905--923.








\end{thebibliography}
\end{document}